\documentclass{amsart}

\usepackage{mathrsfs}
\usepackage{amsfonts}
\usepackage{amsmath}
\usepackage{amssymb}
\usepackage{mathtools}
\usepackage{float}
\usepackage{xcolor}
\usepackage[inline]{enumitem}
\usepackage{csquotes}
\usepackage{subcaption}
\usepackage{tikz}
\usepackage{tikz-cd}
\usetikzlibrary{arrows,automata}
\usepackage{hyperref}
\usepackage{centernot}
\usepackage{varwidth}
\usepackage{bbm}

\usepackage{baskervald}

\newcommand{\pres}[3]{\textnormal{#1} \langle #2 \mid #3 \rangle}

\newcommand{\lra}[1]{\xleftrightarrow{\ast}_{#1}}
\newcommand{\xra}[1]{\xrightarrow{}^\ast_{#1}}
\newcommand{\xr}[1]{\xrightarrow{}_{#1}}

\newcommand{\trev}{\text{rev}}

\newcommand{\CF}{\mathbf{CF}}
\newcommand{\CS}{\mathbf{CS}}

\newcommand{\IND}{\mathbf{IND}}

\newcommand{\ETOL}{\mathbf{ET0L}}
\newcommand{\EDTOL}{\mathbf{EDT0L}}

\newcommand{\cc}{\mathbf{C}}
\newcommand{\cT}{\mathcal{T}}

\newcommand{\cl}{\mathcal{L}}

\newcommand{\bi}{\mathbbm{1}}

\newcommand{\sR}{\mathscr{R}}
\newcommand{\sS}{\mathscr{S}}
\newcommand{\sT}{\mathscr{T}}
\newcommand{\anc}[1]{\nabla^\ast_{#1}}

\DeclareMathOperator{\AFL}{AFL}

\DeclareMathOperator{\Alt}{Alt}

\DeclareFontFamily{U}{wncy}{}
    \DeclareFontShape{U}{wncy}{m}{n}{<->wncyr10}{}
    \DeclareSymbolFont{mcy}{U}{wncy}{m}{n}
    \DeclareMathSymbol{\sh}{\mathord}{mcy}{"78}

\newtheorem{theorem}{Theorem}[section] 

\newtheorem{lemma}[theorem]{Lemma}     
\newtheorem{corollary}[theorem]{Corollary}

\newtheorem{proposition}[theorem]{Proposition}
\newtheorem{theoremB}{Theorem}

\newtheorem{theoremC}{Theorem}

\theoremstyle{definition}
\newtheorem{definition}{Definition}

\newtheorem{example}{Example}

\numberwithin{equation}{section}

\begin{document}

\title[Multiplication tables in free products]{Multiplication tables and word-hyperbolicity in free products of semigroups, monoids, and groups}

\author{Carl-Fredrik Nyberg-Brodda}
\address{Alan Turing Building, Department of Mathematics, University of Manchester, United Kingdom.}
\email{carl-fredrik.nybergbrodda@manchester.ac.uk}

\thanks{The author gratefully acknowledges funding from the Dame Kathleen Ollerenshaw Trust, which is funding his current position as Research Associate at the University of Manchester.}

\subjclass[2020]{20M05 (primary) 20M35, 20F67, 68Q42, 68Q45 (secondary)}

\date{\today}


\keywords{}

\begin{abstract}
This article studies the properties of word-hyperbolic semigroups and monoids, i.e. those having context-free multiplication tables with respect to a regular combing, as defined by Duncan \& Gilman. In particular, the preservation of word-hyperbolicity under taking free products is considered. Under mild conditions on the semigroups involved, satisfied e.g. by monoids or  regular semigroups, we prove that the semigroup free product of two word-hyperbolic semigroups is again word-hyperbolic. Analogously, with a mild condition on the uniqueness of representation for the identity element, satisfied e.g. by groups, we prove that the monoid free product of two word-hyperbolic monoids is word-hyperbolic. The methods are language-theoretically general, and apply equally well to semigroups, monoids, or groups with a $\cc$-multiplication table, where $\cc$ is any reversal-closed super-$\AFL$. In particular, we deduce that the free product of two groups with $\ETOL$ resp. indexed multiplication tables again has an $\ETOL$ resp. indexed multiplication table.
\end{abstract}

\maketitle



\noindent Hyperbolic groups, being groups whose Cayley graphs are hyperbolic metric spaces, were introduced by Gromov in his seminal monograph \cite{Gromov1987}. Subsequently, the theory of hyperbolic groups has grown to one of the most influential areas of group theory. It is natural to wish to generalise hyperbolicity from groups to semigroups and monoids. This can be done in several ways. One way is to consider semigroups and monoids whose (right, undirected) Cayley graphs are hyperbolic as metric spaces. This approach is somewhat brittle: for a trivial example which illustrates this well, if $G$ is \textit{any} group whatsoever, then $G^0$ -- the semigroup obtained from $G$ by adjoining a zero -- has a hyperbolic Cayley graph when this is considered as an undirected graph. A second, and somewhat more robust, approach is to use the methods of formal language theory, which is the starting point of the present article.

Language-theoretic methods in group theory have a rich history spanning the past half century, starting with An\={\i}s\={\i}mov in 1969 \cite{Anisimov1969} and continuing with the seminal \cite{Anisimov1971}. It is in this latter article in which the ``word problem'' of a group, being the formal language of words over a generating set representing the identity element, was introduced and studied. The connections between groups and context-free languages were explored further by An\={\i}s\={\i}mov \cite{Anisimov1972, Anisimov1972b, Anisimov1973, Anisimov1975b, Anisimov1975a}. Muller \& Schupp \cite{Muller1983} (contingent on a weak form of a deep result by Dunwoody \cite{Dunwoody1985}) subsequently proved a striking classification: a finitely generated group has context-free word problem if and only if it is virtually free. This decisively demonstrated the depth of the connection first uncovered by An\={\i}s\={\i}mov.

With the importance of context-free languages to the theory of groups, it seems natural to desire a purely language-theoretic definition of hyperbolicity in groups. One such definition was given by Grunschlag \cite{Grunschlag1999}, who proved that a group is hyperbolic if and only if its word problem is generated by a terminating growing context-sensitive grammar. An arguably more elegant characterisation, using the weaker expressive power of context-free languages, was given by Gilman \cite{Gilman2002}. This definition has the added benefit of being generalisable directly to semigroups, which was done by Duncan \& Gilman \cite{Duncan2004}. To distinguish from the geometric variant, this form of hyperbolicity is called \textit{word-hyperbolicity}. Loosely speaking, a semigroup $S$ is word-hyperbolic if there exists a regular language of representatives (with no requirement of uniqueness) such that the multiplication table for $S$ with respect to this language can be described by a context-free language. This definition (which is described formally in \S\ref{Subsec:Intro-hyp}) is equivalent to geometric hyperbolicity for groups \cite[Corollary~4.3]{Duncan2004} and for completely simple semigroups \cite[Theorem~4.1]{Fountain2004}. In general, however, word-hyperbolic semigroups are a more restricted class than hyperbolic semigroups, and appear somewhat more amenable to general results than the geometric approach to hyperbolic semigroups.\footnote{Having said this -- and as a paper on hyperbolicity of semigroups may be considered to be skewed without some references to the geometry of semigroups -- there is a recent trend, pioneered by Gray \& Kambites, in successfully handling the directed geometry of semigroups in a manner extending the usual geometric group theory (e.g. the Milnor-Schwarz lemma \cite{Schwarz1955}), see \cite{Gray2011, Gray2013, Gray2014, Gray2017, Gray2020}; cf. also \cite{Silva2004, Hoffmann2006, Kuske2006, Awang2017, Garreta2021}.} For example, just as in groups, there are links between word-hyperbolicity and automaticity in semigroups. 

Whenever a generalisation (e.g. of hyperbolicity in groups) is made, it is useful to ask: what properties should be desired to be retained, and which should not? For example, hyperbolicity in groups is independent of generating set chosen, which is a rather (one may argue) essential and desirable property; this property holds also for word-hyperbolic semigroups \cite[Theorem~3.4]{Duncan2004}. Furthermore, the word problem is well-known to be decidable in all hyperbolic groups (in linear time); for word-hyperbolic semigroups, the word problem is also decidable \cite[Theorem~3.8]{Hoffmann2002}, in fact in polynomial time \cite[Theorem~7.1]{Cain2016}. 

On the other hand, while hyperbolic groups are \textit{automatic} \cite[Theorem~3.4.5]{Epstein1992}, it is not true that word-hyperbolic semigroups are always automatic \cite[Example~7.7]{Hoffmann2002}. Similarly, while the isomorphism problem is decidable for hyperbolic groups \cite{Dahmani2011}, it is undecidable in general for word-hyperbolic semigroups \cite[Theorem~4.3]{Cain2016}. When considering which properties are desired -- and are reasonable to desire -- if a definition is found to not satisfy one such desired property, then an amendment to the definition which forces this property to hold may be considered.\footnote{In fact, two amendments have already been proposed, by Hoffmann \& Thomas \cite{Hoffmann2010} (which recovers automaticity and having word problem decidable in $O(n \log n$-time) and Cain \& Pfeiffer \cite{Cain2016}, respectively.} It is the view of the author that free products of word-hyperbolic semigroups ought to be word-hyperbolic; free products are free constructions, and free objects (e.g. free semigroups) are word-hyperbolic. While we are not able to prove this result with exactly this statement, the main results of this article will demonstrate that any possible counterexample to the general statement will be exceptional, rather than the norm. 

The outline of the paper is as follows. In \S\ref{Sec:Notation_et_al} we will give some background, necessary definitions, and notation. In particular, in \S\ref{Subsec:ETOL-subst} we will give a brief overview of the connections between substitutions in formal language theory (crucial to the arguments in subsequent sections) and $\ETOL$ systems. In \S\ref{Sec:SGPFP}, we shall prove the main result for semigroup free products: 

\begin{theoremB}\label{Thm:Sgp_extendable_result}
Let $S_1, S_2$ be $1$-extendable word-hyperbolic semigroups. Then the free product $S_1 \ast S_2$ is word-hyperbolic. 
\end{theoremB}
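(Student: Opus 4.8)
The plan is to build an explicit regular combing for $S_1 \ast S_2$ out of combings for the two factors and then to realise its multiplication table inside $\cc$ by a substitution argument, using that $\cc$ is a reversal-closed super-$\AFL$. Fix regular combings $L_i \subseteq A_i^{+}$ for $S_i$ with $A_1 \cap A_2 = \varnothing$, chosen so that the multiplication tables $\{u \$ v \ts w^{\trev} : u,v,w \in L_i,\ uv =_{S_i} w\}$ lie in $\cc$. Every element of $S_1 \ast S_2$ has an alternating normal form, so I would take as combing $L$ the set of alternating concatenations $x_1 x_2 \cdots x_r$ in which consecutive syllables come from different factors and each $x_t$ lies in the corresponding $L_i$; since $L_1, L_2$ are regular and the alphabets are disjoint, $L$ is regular, and it maps onto $S_1 \ast S_2$.

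First I would analyse a single product $UV = W$ in the free product. Writing $U = u_1 \cdots u_m$, $V = v_1 \cdots v_n$ and $W = w_1 \cdots w_k$ in syllables, there are exactly two possibilities: either the last syllable of $U$ and the first of $V$ lie in different factors, in which case $k = m+n$ and the normal form is the concatenation of the two syllable sequences; or they lie in the same factor, in which case they coalesce into a single syllable and $k = m+n-1$. Crucially, no coalescence can cascade, since the neighbours of a merged syllable lie in the opposite factor. Thus verifying $UV = W$ reduces to checking a sequence of per-syllable equalities $u_t =_{S_i} w_t$ (and similarly for the syllables of $V$), together with at most one junction condition: either two such equalities, when no merge occurs, or a single product $u_m v_1 =_{S_i} w_m$, when the junction syllables coalesce.

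This is where the hypotheses enter. The junction-product condition is handled directly by the factor multiplication tables, which lie in $\cc$ by assumption. For the per-syllable equalities I need the equality languages $E_i = \{x \$ y^{\trev} : x,y \in L_i,\ x =_{S_i} y\}$ to lie in $\cc$, and this is precisely the point at which $1$-extendability is used: it guarantees that an equality $x =_{S_i} y$ can be certified as a multiplication against a bounded family of identity-like representatives (the representative of the identity when $S_i$ is a monoid, a suitable idempotent factor when $S_i$ is regular), so that $E_i$ is obtained from the multiplication table by intersection with a regular language followed by a finite-state transduction, both of which preserve membership in $\cc$.

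It remains to assemble these pieces, which I expect to be the main obstacle. Because the multiplication table records $W$ reversed, the syllable correspondences nest like balanced brackets, with $u_1$ matched against the last block of $W^{\trev}$ and the junction matched innermost. I would therefore realise the multiplication table of $L$ by a nested iterated substitution built from the component languages $E_1, E_2$ and the two factor multiplication tables into a regular skeleton that tracks the alternating pattern and the two junction cases. Since $\cc$ is a super-$\AFL$ it is closed under such nested iterated substitution, reversal-closure aligns the individually reversed blocks of $W$, and the remaining $\AFL$ operations clean up the separators; together these place the multiplication table of $L$ in $\cc$, so that $S_1 \ast S_2$ is word-hyperbolic. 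The delicate point throughout is that a single one-stack device must certify arbitrarily many independent equalities simultaneously; it is the reversal built into the definition of the multiplication table that makes these checks nest, and the super-$\AFL$ closure that makes the nesting cost nothing.
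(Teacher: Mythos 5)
Your proposal follows essentially the same route as the paper: the alternating combing $\Alt^+(R_1,R_2)$, the two-case junction analysis (plain concatenation versus a single, non-cascading coalescence), equality languages extracted from $1$-extendability, and a nested recursion matching syllables of $U$ and $V$ against the reversed $W$ from the outside in --- which the paper implements as monadic ancestors of $\#_1\#_2$ under a rewriting system built from the tables $\cT_{S_i^\bi}(R_i\cup\{\varepsilon\})$, an operation equivalent to your nested iterated substitution for a super-$\AFL$ --- followed by a rational transduction and intersection with $R\#_1 R\#_2 R^\trev$. The only slippage is that you justify the step ``$1$-extendability gives the equality languages'' via right stabilisers and idempotents, which is really the content of the sufficient condition in Lemma~\ref{Lem:right-stabiliser-extends} rather than the definition; the consequence you actually need follows directly from the definition by intersecting $\cT_{S_i^\bi}(R_i\cup\{\varepsilon\})$ with the regular language $R_i\#_1\#_2 R_i^\trev$ and applying a rational transduction.
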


The technical condition for a word-hyperbolic semigroups $S$ to be $1$-extendable is defined and explored in \S\ref{Subsec:Intro-hyp}, and loosely speaking consists of a condition ensuring that the word-hyperbolic structure for $S$ does not collapse if one adjoins an identity to $S$. In particular (see Lemma~\ref{Lem:Types-of-extendable}), any monoid and any (von Neumann) regular semigroup is $1$-extendable, so Theorem~\ref{Thm:Sgp_extendable_result} applies when $S_1$ and $S_2$ are from either of these classes.

To deal with monoid free products (and, as a particular case, group free products), in \S\ref{Sec:Polypartisan} we will first develop some technical purely language-theoretic tools which we call \textit{polypartisan ancestors}. Loosely speaking, polypartisan ancestors model a form of sequential rewriting with respect to rules of the form $a \to W$, where $a$ is a single letter, and where the word $w$ to which the rewriting is applied is divided into some fixed number $k$ parts such that each part is rewritten using possibly different sets of rules. In \S\ref{Sec:MFP}, we will first use our previous results on semigroup free products to prove the following main result: 

\begin{theoremB}\label{Thm:Monoid-fp-theorem-1-unqiueness}
Let $M_1, M_2$ be two word-hyperbolic monoids with $1$-uniqueness (with uniqueness). Then the monoid free product $M_1 \ast M_2$ is word-hyperbolic with $1$-uniqueness (with uniqueness).
\end{theoremB}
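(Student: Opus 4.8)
The plan is to build a combing for $P := M_1 \ast M_2$ out of the combings for the factors, to excise the (unique) identity representatives so that the resulting normal form harbours no hidden identity blocks, and then to recover the multiplication table of $P$ from that of the \emph{semigroup} free product $T := S_1 \ast S_2$, where $S_i$ denotes $M_i$ regarded as a semigroup. The role of $1$-uniqueness is precisely to make this excision preserve regularity: by hypothesis the identity $1_{M_i}$ has a single representative $\varepsilon_i \in L_i$, so $L_i' := L_i \setminus \{\varepsilon_i\}$ is again regular and, crucially, \emph{no} word of $L_i'$ represents $1_{M_i}$. (Were the identity representatives of $M_i$ infinite, or merely irregular, deleting them all could destroy regularity, since the set of identity representatives is the word problem of $M_i$, which need not be regular.)

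First I would fix the generating set $A_1 \sqcup A_2$ for $P$ and take as combing
\[
L \;=\; \{\varepsilon\} \;\cup\; \bigl\{\, w_1 w_2 \cdots w_n : n \geq 1,\; w_i \in L_1' \cup L_2',\; w_i, w_{i+1}\ \text{from distinct factors} \,\bigr\},
\]
which is regular because $L_1', L_2'$ are and the alternation constraint is regular. Since every block lies in some $L_i'$ and hence represents a non-identity element, no non-empty word of $L$ represents $1_P$; thus $\varepsilon$ is the unique representative of the identity and $L$ witnesses $1$-uniqueness for $P$. If moreover each $L_i$ maps bijectively onto $M_i$ (the ``with uniqueness'' hypothesis), then each $L_i'$ maps bijectively onto $M_i \setminus \{1\}$, the alternating normal form is unique, and $L$ maps bijectively onto $P$, so uniqueness is preserved; the two parenthetical variants are therefore handled by a single construction.

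Next I would invoke Theorem~\ref{Thm:Sgp_extendable_result}. By Lemma~\ref{Lem:Types-of-extendable} every monoid is $1$-extendable, so $T$ is word-hyperbolic with respect to the analogous combing $L_T$ whose blocks are drawn from the \emph{full} languages $L_1, L_2$, and its multiplication table $M(L_T)$ is context-free. The point of passing through $T$ is that multiplication there is ``one-step'': one concatenates two alternating words and, if the inner blocks lie in the same factor, performs a single factor multiplication whose result may be $1_{M_i}$, but in $T$ this is merely an ordinary idempotent element and triggers no further interaction. By $1$-uniqueness such an inner identity block is represented in $L_T$ by the word $\varepsilon_i$, hence is syntactically recognisable. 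The heart of the proof is then to express the multiplication table $M(L)$ of $P$ in terms of $M(L_T)$: multiplying $u, v \in L$ in $P$ amounts to forming their one-step product $z \in L_T$ and then \emph{reducing}, namely repeatedly deleting $\varepsilon_i$-blocks and re-multiplying the now-adjacent same-factor neighbours, a cascade propagating inward from the centre for an a priori unbounded number of steps. Thus $(u,v,w)$ lies in $M(L)$ if and only if there is some $z$ with $(u,v,z) \in M(L_T)$ whose reduction is $w$; equivalently, $z$ is an \emph{ancestor} of $w$ under the identity-cancellation rewriting, split around the centre into a left region inherited from $u$, a cancelling middle, and a right region inherited from $v$. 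This is exactly the situation modelled by the polypartisan ancestor operation $\anc{}$ of \S\ref{Sec:Polypartisan}, with the word divided into a fixed number of parts each rewritten by the rules $a \to W$ coming from the appropriate factor's table. Applying $\anc{}$ to $M(L_T)$ and intersecting with the regular constraints enforcing the matching of the non-cancelling prefix of $u$, the non-cancelling suffix of $v$, and the single surviving middle block yields $M(L)$. Since $\CF$ is a reversal-closed super-$\AFL$ and, by \S\ref{Sec:Polypartisan}, is closed under $\anc{}$ together with the intersection-with-regular and homomorphism operations used here, we conclude $M(L)$ is context-free, so $P$ is word-hyperbolic; the same argument with $\CF$ replaced by any reversal-closed super-$\AFL$ $\cc$ gives the $\cc$-table version.

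The main obstacle is exactly this cascading cancellation: neither a single pushdown nor the one-step table $M(L_T)$ on its own can perform the unbounded, nested sequence of factor word-problem checks $\pi(u_{m-i})\pi(v_{i+1}) = 1$ that accumulate toward the centre, which is why the polypartisan ancestor formalism is needed to iterate the reduction while staying inside $\CF$. The remaining work is bookkeeping of the boundary cases -- when the inner blocks lie in distinct factors (so there is no cancellation and $w$ is the literal concatenation), when one of $u, v$ is entirely consumed (so $w$ is a prefix of $u$ or a suffix of $v$ up to one boundary multiplication), and when $u$ or $v$ is empty -- each of which is regular or a single factor multiplication and so is absorbed into the construction without difficulty.
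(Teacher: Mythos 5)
Your overall strategy --- build the alternating combing from the $\varepsilon$-free factor combings, note that $1$-uniqueness makes every alternating word reduced, obtain the semigroup free product table from Theorem~\ref{Thm:Sgp_extendable_result} via Lemma~\ref{Lem:Types-of-extendable}, and then close up under an ancestor operation to absorb the cascading cancellation --- is exactly the paper's strategy, and your treatment of the combing and of the two uniqueness variants is fine. But the mechanism you propose for the cancellation step has a genuine gap, in two respects. First, you place the reduction on the product side: you characterise the table of $P$ as the set of $(u,v,w)$ such that some $(u,v,z)$ lies in the semigroup table and $z$ ``reduces'' to $w$. This biconditional fails precisely in the interesting case: if the junction blocks satisfy $u_n v_1 =_{M_i} 1$, then the element $u \cdot v$ of the \emph{semigroup} free product retains an interior component equal to $1_{M_i}$, and under $1$-uniqueness that component's only representative is $\varepsilon$, so $u \cdot v$ need have no representative $z$ in the alternating combing at all (blocks of an alternating factorisation are required to be non-empty); there is then no row $(u,v,z)$ of the semigroup table to start from. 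Moreover, the reduction you describe --- delete an identity block and ``re-multiply the now-adjacent same-factor neighbours'' --- is not a monadic rewriting: the product of two blocks is an arbitrary word of $R_i$, not a single letter or $\varepsilon$, so none of the super-$\AFL$ closure properties apply to it.

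Second, polypartisan ancestors are the wrong tool for this step. By \eqref{Eq:Def_of_k_ancestors}, $\sR_{(k)}$ rewrites each of the $k+1$ segments of a shuffled word \emph{independently}, with rules drawn from $A^\ast \times A^\ast$ that never cross a separator; it cannot express the coupling of a suffix of $u$ with a prefix of $v$ that the cascading cancellation requires. The paper instead runs the cancellation on the left-hand side $u \#_1 v$, using the ordinary monadic system $\sS_i = \{(x' \#_1 x'', \#_1) \mid x', x'' \in R_i,\ x'x'' =_{M_i} 1\}$, whose rules deliberately straddle $\#_1$ and whose left-hand sides form a quotient of $\cT_{M_i}(R_i)$, hence are context-free. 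One then proves $\cT_M(R) = \anc{\sS}(\cT_S(R)) \cap R\#_1 R \#_2 R^\trev$ by induction on $|w_1|+|w_2|$, with cases (1) and (2) of Lemma~\ref{Lem:Mult_table_MON_lemma} (no cancellation) as the base case; the final merge of the surviving junction blocks is already recorded inside $\cT_S(R)$, so no re-multiplication is ever performed. Polypartisan ancestors enter only later, in Theorem~\ref{Thm:star-hyp-fp}, where the rewriting genuinely is per-segment.
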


Here, $1$-uniqueness (which is defined in \S\ref{Subsec:1-uniqueness}) is the condition on a word-hyperbolic monoid $M$, with regular language of representatives $R$, saying: the only word in $R$ which represents the identity element is the empty word. In particular, one can show that hyperbolic groups are word-hyperbolic with $1$-uniqueness, and as a corollary we derive, by language-theoretic means, the well-known group-theoretic result that the free product of hyperbolic groups is hyperbolic. Using polypartisan ancestors, we also deduce (Theorem~\ref{Thm:star-hyp-fp}) that the monoid free product of $\star$-word-hyperbolic monoids is $\star$-word-hyperbolic. Here, $\star$-word-hyperbolicity refers to a condition which can be seen, in a sense made precise in \S\ref{Subsec:NewDef}), as ``complementary'' to $1$-uniqueness. 

Finally, in \S\ref{Sec:Finalremarks}, we use the language-theoretic generality in which the article is written to note that our results generalise immediately from context-free multiplication tables to much more general situations, including $\ETOL$ and indexed multiplication tables, leading to Theorem~\ref{Thm:A'} and Theorem~\ref{Thm:B'}, being more general versions of the main results of this article. In particular, we deduce (Corollary~\ref{Cor:ETOLtabledgroups}) that the free product of two groups, each having an $\ETOL$ multiplication table, again has an $\ETOL$ multiplication table; the analogous statement (Corollary~\ref{Cor:INDtabledgroups}) for groups with indexed multiplication tables is also deduced.

\setcounter{theoremB}{0}

\clearpage

\section{Introduction and Notation}\label{Sec:Notation_et_al}

The paper also assumes familiarity with the basics of the theory of semigroup, monoid, and group presentations, which will be written as $\pres{Sgp}{A}{\sR}$, $\pres{Mon}{A}{\sR}$, and $\pres{Gp}{A}{\sR}$, respectively. For further background see e.g. \cite{Adian1966,Magnus1966,Neumann1967, Lyndon1977,Campbell1995}.

\subsection{Formal languages}

\noindent We assume the reader is familiar with the fundamentals of formal language theory. In particular, a full $\AFL$ (\textit{abstract family of languages}) is a class of languages closed under homomorphism, inverse homomorphism, intersection with regular languages, union, concatenation, and the Kleene star. Furthermore, a class $\cc$ is \textit{reversal-closed} if for all $L \in \cc$, we have $L^\trev \in \cc$. Here $L^\trev$ denotes the language of all words in $L$ read backwards (see \S\ref{Subsec:Rewritingsystems} for a formal definition). For some background on this, and other topics in formal language theory, we refer the reader to standard books on the subject \cite{Harrison1978, Berstel1979, Hopcroft1979, Rozenberg1997}. The class of context-free languages will be denoted $\CF$.

We will also, in \S\ref{Sec:Finalremarks} and \S\ref{Subsec:ETOL-subst} make reference to the class $\IND$ of indexed languages. The latter was introduced in Aho's Ph.D. thesis \cite{Aho1967}, see also \cite{Aho1968} as an extension of the context-free languages; we refer the reader to e.g. \cite{Hayashi1973, Gilman1996, Smith2017} or \cite[Chapter~14]{Hopcroft1979} for particularly readable definitions. Finally, we shall make some reference to the classes $\ETOL$ and $\EDTOL$ in \S\ref{Sec:Finalremarks} (but not in the main sections of the paper). These are examples of $\mathbf{L}$-languages, which arise from $\mathbf{L}$-systems. The theory of $\mathbf{L}$-systems originated in 1968 in the work of Lindenmayer \cite{Lindenmayer1968a, Lindenmayer1968b} (whence the $\mathbf{L}$) as a theory for the parallel branching of filamentous organisms in biology, but subsequently grew into a core branch of formal language theory \cite{Rozenberg1974, Herman1975, Rozenberg1976, Rozenberg1977}. Because of this vast literature (and as we shall not need the definitions), we shall not define either $\ETOL$ or $\EDTOL$, instead referring the reader to more recent articles on the subject (e.g. especially \cite{Ciobanu2018}, see also \cite{Istrate1997, Rabkin2012, Brough2016}). The research topic remains very active; particularly the connections between $\ETOL$ and $\EDTOL$ languages and equations over groups and monoids have flourished in recent years \cite{Ferte2014, Ciobanu2016, Ciobanu2018, Ciobanu2019, Diekert2020, Ciobanu2021, Levine2021, Evetts2022, Levine2022, Jez2022}. There are also recent links with geometric group theory. For example, Bridson \& Gilman \cite{Bridson1996} famously proved that any $3$-manifold group admits a combing which is an indexed language; in fact, their combing is an $\ETOL$ language \cite{Ciobanu2018} (note that $\CF \subsetneq \ETOL \subsetneq \IND$). 

A useful analogy to keep in mind is the following: the class $\CF$ models closure under sequential recursion; the class $\ETOL$ models closure under parallel recursion. See \S\ref{Subsec:ETOL-subst} for further details on this analogy, particularly Theorem~\ref{Thm:CF_ETOL_smallest_AFLs}, as well as \cite{Rozenberg1979}. Finally, $\CF, \ETOL$, and $\IND$ are all easily seen to be reversal-closed.

\subsection{Rewriting systems}\label{Subsec:Rewritingsystems}

Let $A$ be a finite alphabet, and let $A^\ast$ denote the free monoid on $A$, with identity element denoted $\varepsilon$ or $1$, depending on the context. Let $A^+$ denote the free semigroup on $A$, i.e. $A^+ = A^\ast - \{ \varepsilon\}$. For $u, v \in A^\ast$, by $u \equiv v$ we mean that $u$ and $v$ are the same word. For $w \in A^\ast$, we let $|w|$ denote the \textit{length} of $w$, i.e. the number of letters in $w$. We have $|\varepsilon| = 0$. If $w \equiv a_1 a_2 \cdots a_n$ for $a_i \in A$, then we let $w^\trev$ denote the \textit{reverse} of $w$, i.e. the word $a_n a_{n-1} \cdots a_1$. Note that $^\trev \colon A^\ast \to A^\ast$ is an anti-homomorphism, i.e. $(uv)^\trev \equiv v^\trev u^\trev$ for all $u, v \in A^\ast$. If $X \subseteq A^\ast$, then we let $X^\trev = \{ x^\trev \mid x \in X \}$. If the words $u, v \in A^\ast$ are equal in the monoid $M = \pres{Mon}{A}{\sR}$, then we denote this $u =_M v$. By $M^\trev$ we mean the \textit{reversed} monoid
\[
M^\trev = \pres{Mon}{A}{ \{ u^\trev = v^\trev \mid (u,v) \in \sR \}}.
\]
For $w_1, w_2 \in A^\ast$,  $w_1 =_M w_2$ if and only if $w_1^\trev =_{M^\trev} w_2^\trev$. That is, $M$ and $M^\trev$ are anti-isomorphic (if $G$ is a group, then clearly $G \cong G^\trev$). Finally, when we say that a monoid $M$ is generated by a set $A$, we mean that there exists a surjective homomorphism $\pi \colon A^\ast \to M$. We use analogous terminology for semigroups and groups.

We give some notation for rewriting systems. For an in-depth treatment and further explanations of the terminology, see e.g. \cite{Book1982, Jantzen1988, Book1993}. A \textit{rewriting system} $\sR$ on $A$ is a subset of $A^\ast \times A^\ast$. We shall denote rewriting systems by script letters, e.g. $\sR, \sS, \sT$. An element of $\sR$ is called a \textit{rule}. The system $\sR$ induces several relations on $A^\ast$. We will write $u \xr{\sR} v$ if there exist $x, y \in A^\ast$ and a rule $(\ell, r) \in \sR$ such that $u \equiv x\ell y$ and $v \equiv xry$. We let $\xra{\sR}$ denote the reflexive and transitive closure of $\xr{\sR}$. We denote by $\lra{\sR}$ the symmetric, reflexive, and transitive closure of $\xr{\sR}$. The relation $\lra{\sR}$ defines the least congruence on $A^\ast$ containing $\sR$. For $X \subseteq A^\ast$, we let $\anc{\sR}(X)$ denote the set of \textit{ancestors} of $X$ with respect to $\sR$, i.e. $\anc{\sR}(X) = \{ w \in A^\ast \mid \exists x \in X \textnormal{ such that } w \xra{\sR} x \}$. The monoid $\pres{Mon}{A}{\sR}$ is identified with the quotient $A^\ast / \lra{\sR}$. For a rewriting system $\sT \subseteq A^\ast \times A^\ast$ and a monoid $M = \pres{Mon}{A}{\sR}$, we say that $\sT$ is $M$\textit{-equivariant} if for every rule $(u, v) \in \sT$, we have $u =_M v$. That is, $\sT$ is $M$-equivariant if and only if $\lra{\sT} \subseteq \lra{\sR}$.

 Let $u, v \in A^\ast$ and let $n \geq 0$. If there exist words $u_0, u_1, \dots, u_n \in A^\ast$ such that 
\[
u \equiv u_0 \xr{\sR} u_1 \xr{\sR} \cdots \xr{\sR} u_{n-1} \xr{\sR} u_n \equiv v,
\]
then we denote this $u \xr{\sR}^n v$, i.e. $u$ rewrites to $v$ in $n$ steps. Thus $\xra{\sR} = \bigcup_{n \geq 0} \xr{\sR}^n$.

A rewriting system $\sR \subseteq A^\ast \times A^\ast$ is said to be \textit{monadic} if $(u, v) \in \sR$ implies $|u| \geq |v|$ and $v \in A \cup \{ \varepsilon \}$. We say that $\sR$ is \textit{special} if $(u, v) \in \sR$ implies $v \equiv \varepsilon$. Every special system is monadic. Let $\cc$ be a class of languages. A monadic rewriting system $\sR$ is said to be $\cc$ if for every $a \in A \cup \{ \varepsilon \}$, the language $\{ u \mid (u, a) \in \sR \}$ is in $\cc$. Thus, we may speak of e.g. $\cc$-monadic rewriting systems or context-free monadic rewriting systems. Monadic rewriting systems are extensively treated in \cite{Book1982}.

\begin{definition}
Let $\cc$ be a class of languages. Let $\sR \subseteq A^\ast \times A^\ast$ be a rewriting system. Then we say that $\sR$ is $\cc$\textit{-ancestry preserving} if for every $L \subseteq A^\ast$ with $L \in \cc$, we have $\anc{\sR}(L) \in \cc$. If every $\cc$-monadic rewriting system is $\cc$-ancestry preserving, then we say that $\cc$ has the \textit{monadic ancestor property}.
\end{definition}

The terminology \textit{monadic ancestor property} was introduced by the author in \cite{NybergBrodda2020b}, and also appears in \cite{NybergBrodda2020c, NybergBrodda2021f}, but was treated implicitly already in \cite{Book1982, Jantzen1988}, see especially \cite[Lemma~3.4]{Jantzen1988}. The idea of defining classes of languages via ancestry in rewriting systems is not new, and can be traced back at least to e.g. McNaughton et al's Church-Rosser languages \cite{McNaughton1988} or Beaudry et al.'s McNaughton languages \cite{Beaudry2002, Beaudry2003}.

\begin{example}\label{Ex:CF-is-super-afl}
If $\sR \subseteq A^\ast \times A^\ast$ is a context-free monadic rewriting system, and $L \subseteq A^\ast$ is a context-free language, then $\anc{\sR}(L)$ is a context-free language \cite[Theorem~2.2]{Book1982}. That is, every $\CF$-monadic rewriting system is $\CF$-ancestry preserving. Hence the class of context-free languages has the monadic ancestor property. 
\end{example}

Having the monadic ancestor property is analogous to being closed under sequential recursion; see \S\ref{Subsec:ETOL-subst} for further elaboration on this. This gives rise to the notion of a super-$\AFL$. 

\begin{definition}\label{Def:super-AFL}
Let $\cc$ be an $\AFL$. Then $\cc$ is said to be a \textit{super-$\AFL$} if it has the monadic ancestor property.
\end{definition}

Hence, by Example~\ref{Ex:CF-is-super-afl}, $\CF$ is a super-$\AFL$. For the main body of the text, this is the only super-$\AFL$ we will deal with; see, however, \S\ref{Subsec:ETOL-subst} for a broader discussion, and \S\ref{Sec:Finalremarks} for generalisations of our results to all reversal-closed super-$\AFL$s. The primary reason for dealing only with context-free languages comes from the importance of $\CF$ with regards to word-hyperbolicity.

\subsection{Word-hyperbolicity}\label{Subsec:Intro-hyp}

Let $S$ be a semigroup, finitely generated by some set $A$, with associated surjective homomorphism $\pi_S \colon A^+ \to S$. Let $R \subseteq A^+$ be a regular language. If $\pi_S(R) = S$, i.e. every element of $S$ is represented by some word from $R$, then we say that $R$ is a \textit{regular combing} of $S$. If $\pi_S$ is bijective when restricted to $R$, then we say that $R$ is a regular combing \textit{with uniqueness}. Let $\#_1, \#_2$ be two new symbols, and let 
\begin{equation}\label{Def:MULT_TABLE}
\cT_S(R) = \{ u \#_1 v \#_2 w^\trev \mid u, v, w \in R \textnormal{ such that } u \cdot v =_S w \}.
\end{equation}
We say that $\cT_R(S)$ is a \textit{multiplication table} for $S$ (with respect to $R$). If this table is context-free, i.e. if $\cT_S(R) \in \CF$, then we say that $S$ is a \textit{word-hyperbolic semigroup} (with respect to the combing $R$). If $R$ is additionally a combing with uniqueness, then we say that $\cT_S(R)$ is word-hyperbolic \textit{with uniqueness}. Not every word-hyperbolic semigroup is word-hyperbolic with uniqueness \cite{Cain2012}. 

The above notion of hyperbolicity was introduced by Duncan \& Gilman \cite{Duncan2004}. One can show that if $S$ is hyperbolic with respect to one choice of finite generating set, then it is hyperbolic with respect to every such choice \cite[Theorem~3.4]{Duncan2004}. However, note that even if $\cT_{S}(R_1) \in \CF$ for some regular combing $R_1$, there may still be some regular combing $R_2$ of $S$ such that $\cT_S(R_2) \not\in \CF$. For extensions of the condition $\cT_S(R) \in \CF$ to e.g. $\cT_S(R) \in \ETOL$ or $\cT_S(R) \in \IND$, see \S\ref{Sec:Finalremarks}. 

We extend this definition in the obvious way to monoids (and groups), by substituting $A^\ast$ for $A^+$. Thus a monoid $M$ generated by $A$ is word-hyperbolic ``as a monoid'' if and only if there exists a regular combing $R \subseteq A^\ast$ such that $\cT_M(R) \in \CF$. However, by \cite[Theorem~3.5]{Duncan2004}, a monoid is word-hyperbolic ``as a monoid'' if and only if it is word-hyperbolic as a semigroup (in the above sense). We shall therefore speak of ``word-hyperbolic monoids'' always referring to a regular combing $R \subseteq A^\ast$. In fact, it is not difficult to see, by using a rational transduction, that if $M$ is word-hyperbolic with respect to a combing $R \subseteq A^+$, then it is word-hyperbolic with respect to $R \cup \{ \varepsilon \}$ (see e.g. the first paragraph in the proof of Lemma~\ref{Lem:right-stabiliser-extends}). We may thus assume without loss of generality assume that any regular combing for $M$ includes the empty word (which necessarily represents the identity element). If $M$ is word-hyperbolic with respect to the combing $R$, and the only word in $R$ representing the identity element of $M$ is the empty word, then we say that $M$ is word-hyperbolic \textit{with $1$-uniqueness}.

One can show that a group is word-hyperbolic if and only if it is hyperbolic in the usual sense, i.e. the sense of Gromov \cite[Theorem~4.3]{Duncan2004}. Furthermore, one can show that, due to the Muller-Schupp theorem, if $G$ is a group generated by $A$, then $\cT_{G}(A^\ast)$ is context-free if and only if $G$ is virtually free \cite[Theorem~2(2)]{Gilman2002}, a condition which is significantly stronger than hyperbolicity. Indeed, more generally, it is not difficult to see that a semigroup $S$ is word-hyperbolic with respect to the combing $A^+$ if and only if $S$ has context-free word problem (in the sense of Duncan \& Gilman \cite[\S5]{Duncan2004}). 

For brevity, for $i=1,2$ we let $A_{\#_i} = A \cup \{ \#_i \}$, and let $A_\# = A \cup \{ \#_1, \#_2 \}$. 

\subsection{1-extendability}

We now define a slightly technical condition, which will prove useful in \S\ref{Sec:SGPFP}. Let $S$ be a semigroup. We define $S^\bi$ to be the semigroup with an identity $\bi$ adjoined, \textit{regardless} of whether $S$ has an identity element already or not.\footnote{If $S$ is a monoid, then defining $S^\bi$ in this manner (rather than simply taking $S^\bi = S$) is only a technicality, but is used to avoid some other language-theoretic technicalities.}

\begin{definition}[$1$-extendable]\label{Def:1-extendable}
Let $S$ be a word-hyperbolic semigroup with respect to a regular combing $R \subseteq A^+$. We say that $S$ is $1$\textit{-extendable} if $S^\bi$ is word-hyperbolic with respect to the regular combing $R \cup \{ \varepsilon \}$. 
\end{definition}

Thus if $S$ is a $1$-extendable word-hyperbolic semigroup, then $S^\bi$ is word-hyperbolic. We do not know if the converse holds in general. Our main interest in $1$-extendability will be in the statement of Theorem~\ref{Thm:Sgp_extendable_result}, in which we shall show that the free product of $1$-extendable word-hyperbolic semigroups is again word-hyperbolic. We begin by showing that $1$-extendability is not particularly elusive. 

\begin{lemma}[Kambites]\label{Lem:right-stabiliser-extends}
Let $S$ be a word-hyperbolic semigroup. If every element of $S$ has a right stabiliser (i.e. for every $s \in S$ there exists some $t \in S$ with $st = s$), then $S$ is $1$-extendable.
\end{lemma}
\begin{proof}
Suppose $S$ is generated by the finite set $A$, with $R \subseteq A^+$ a regular combing and $\cT_S(R)$ context-free. As noted by Duncan \& Gilman \cite[Question~1]{Duncan2004}, to show that $S^\bi$ is word-hyperbolic it suffices to show that the language $Q = \{ u \# v^\trev \mid u, v \in R, u=_S v \}$ is context-free, as 
\begin{align*}
\cT_{S^\bi}(R \cup \{ \varepsilon \}) = \{ (\#_1 \#_2)\} &\cup \{ u \#_1 \#_2 v^\trev \mid u, v \in R, u =_S v \} \cup \\ &\cup \{ \#_1 u \#_2 v^\trev \mid u, v \in R, u =_S v \} \cup \cT_S(R),
\end{align*} 
i.e. it is a union of $\cT_S(R)$ and languages obtainable from $Q$ by a rational transduction, and hence also context-free.

For every $a \in A$, let $a' \in R$ be a word such that $aa' =_S a$, i.e. a right stabiliser for $a$, which exists by assumption. Let $A' = \{ a' \mid a \in A \}$. Then for every $u \in R$, say $u \equiv a_1 a_2 \cdots a_n$, we have that $u a_n' =_S u$. By partitioning $R$ based on the final letters of words (which is well-defined as $\varepsilon \not\in R$), we find that the language
\[
U = \bigcup_{a \in A} \left( R / \{ a \} \right) a \#_1 a'
\]
is a regular language, being a finite union of (pairwise disjoint) regular languages. Now
\begin{align*}
\cT_S(R) \cap (U \#_2 R^\trev) &= \{ u \#_1 v \#_2 w^\trev \mid u, v, w \in R, uv =_S w, \exists a \in A \colon u \in A^\ast a, v \equiv a' \} \\
&= \{ u \#_1 a' \#_2 w^\trev \mid u, w \in R, \exists a \in A \colon u \in A^\ast a, u a' =_S w, u a' =_S u \} \\
&= \{ u \#_1 a' \#_2 w^\trev \mid u, w \in R, \exists a \in A \colon u \in A^\ast a, u =_S w \} =: L.
\end{align*}
This latter language $L$ is just given by 
\begin{equation}\label{Eq:Union_aa}
L = \bigcup_{a \in A} \{ u \#_1 a' \#_2 w^\trev \mid u, w \in R, u \in A^\ast a, u =_S w \} =: \bigcup_{a \in A} L_a.
\end{equation}
Now for every $a \in A$, we have $L_a = L \cap A^\ast \#_1 a' \#_2 A^\ast$. Hence, as $\CF$ is closed under union and intersection with regular languages, we have that $L \in \CF$ if and only if for all $a \in A$, $L_a \in \CF$. As $\cT_S(R) \in \CF$, and $U \#_2 R^\trev$ is regular, we have $L \in \CF$, and thus also $L_a \in \CF$ for all $a \in A$. 

For every $a \in A$, let $\varrho_a$ be the rational transduction of $L_a$ defined by deleting $\#_1 a' \#_2$ in the input word and replacing it by $\#$ in the output word, and fixing all other parts of input word in $L_a$. Then
\[
L_a' := \varrho_a(L_a) = \{ u \# w^\trev \mid u, w \in R, u \in A^\ast a, u =_S w \}.
\]
As $\CF$ is closed under rational transduction, we have $L_a' \in \CF$ for all $a \in A$. But clearly
\[
\bigcup_{a \in A} L_a' = \{ u \# w^\trev \mid u, w \in R, u =_S w \} = Q.
\]
As $\CF$ is closed under finite unions, we have $Q \in \CF$, as was to be shown. 
\end{proof}

The author thanks Mark Kambites for suggesting Lemma~\ref{Lem:right-stabiliser-extends} and its proof. We rephrase the above result to our current situation of $1$-extendability, and note the following direct consequence:

\begin{lemma}\label{Lem:Types-of-extendable}
Let $S$ be a word-hyperbolic semigroup. If $S$ is either:
\begin{enumerate}
\item a monoid;
\item a (von Neumann) regular semigroup; or
\item a word-hyperbolic semigroup with uniqueness,
\end{enumerate}
then $S$ is $1$-extendable. 
\end{lemma}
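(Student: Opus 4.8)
The plan is to derive each of the three cases from Lemma~\ref{Lem:right-stabiliser-extends}, which states that a word-hyperbolic semigroup in which every element admits a right stabiliser is $1$-extendable. Thus, for each of the three hypotheses it suffices to verify the single condition: for every $s \in S$ there exists $t \in S$ with $st = s$.

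\emph{Case (1).} If $S$ is a monoid with identity $e$, then for every $s \in S$ we have $se = s$, so $t = e$ serves as a (universal) right stabiliser. Hence Lemma~\ref{Lem:right-stabiliser-extends} applies immediately.

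\emph{Case (2).} Suppose $S$ is (von Neumann) regular, so for every $s \in S$ there exists $s' \in S$ with $s s' s = s$. I would set $t = s' s$; then $st = s(s's) = (ss's) = s$, so $t$ is a right stabiliser for $s$. (Indeed $s's$ is idempotent and acts as a right identity on $s$.) Again Lemma~\ref{Lem:right-stabiliser-extends} applies.

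\emph{Case (3).} Suppose $S$ is word-hyperbolic with uniqueness, with regular combing $R$ and associated bijection $\pi_S|_R$. The cleanest route is not to produce right stabilisers directly, but to observe that uniqueness of representatives lets us verify $1$-extendability from the definition more transparently: because $\pi_S$ restricted to $R$ is injective, the languages of the form $\{u \# v^\trev \mid u,v \in R,\ u =_S v\}$ that appear in the proof of Lemma~\ref{Lem:right-stabiliser-extends} collapse, since $u =_S v$ with $u,v \in R$ forces $u \equiv v$. One expects this to be the main obstacle to address: there is no a priori guarantee that a word-hyperbolic semigroup with uniqueness has right stabilisers (an element need not act as a right identity on anything), so Lemma~\ref{Lem:right-stabiliser-extends} cannot be invoked as a black box here. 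Instead I would argue directly that $\cT_{S^{\bi}}(R \cup \{\varepsilon\})$ is context-free, using the decomposition displayed in the proof of Lemma~\ref{Lem:right-stabiliser-extends}: it is a union of $\cT_S(R)$ with languages obtainable by rational transduction from $Q = \{u \# v^\trev \mid u,v \in R,\ u =_S v\}$, and under uniqueness $Q = \{u \# u^\trev \mid u \in R\}$. The latter is the image of the regular language $R$ under the rational transduction $u \mapsto u \# u^\trev$; equivalently it is $\{u \# u^\trev \mid u \in R\}$, which is context-free (it is a linear language obtained by "folding" the regular language $R$). Since $\CF$ is closed under union and rational transduction, $\cT_{S^{\bi}}(R \cup \{\varepsilon\})$ is context-free, and $S$ is $1$-extendable with respect to the combing $R \cup \{\varepsilon\}$.

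I expect Case~(3) to require the most care, precisely because it does not fit the right-stabiliser framework of Lemma~\ref{Lem:right-stabiliser-extends}; the key insight is that the \emph{uniqueness} hypothesis makes the auxiliary language $Q$ a "marked copy" language $\{u \# u^\trev\}$, whose context-freeness is elementary, thereby bypassing the need for right stabilisers entirely. Cases~(1) and~(2) are short verifications that the right-stabiliser hypothesis holds.
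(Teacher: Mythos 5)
Your proposal is correct and takes essentially the same route as the paper: cases (1) and (2) are exactly the paper's ``direct corollaries'' of Lemma~\ref{Lem:right-stabiliser-extends} (identity element, resp.\ the idempotent $s's$, as right stabiliser), and your case (3) spells out precisely why the paper calls it ``immediate'' --- under uniqueness the auxiliary language $Q$ from the Duncan--Gilman criterion collapses to the marked-copy language $\{u\# u^\trev \mid u \in R\}$, which is plainly context-free. Your observation that case (3) genuinely falls outside the right-stabiliser framework is accurate and matches the paper's decision to treat it separately.
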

\begin{proof}
Parts (1) and (2) are direct corollaries of Lemma~\ref{Lem:right-stabiliser-extends}. Part (3) is immediate, and already noted by Duncan \& Gilman \cite[Question~1]{Duncan2004}.
\end{proof}

In particular, we find that hyperbolic groups are $1$-extendable. On a philosophical note, we remark (for reasons not too dissimilar from those which shall be discussed later in \S\ref{Subsec:NewDef}) that $1$-extendability strikes the author as a very natural condition for working with word-hyperbolic semigroups in the first place. 

\subsection{Semigroup free products}

Semigroup free products can be found described in e.g. \cite[Ch.~8.2]{Howie1995}. We give an overview of this theory here, with some additional terminology that will simplify later notation. 

Let $S_1, S_2$ be semigroups defined by $S_i = \pres{Sgp}{A_i}{\sR_i}$ for $i=1,2$, assuming without loss of generality that $A_1 \cap A_2 = \varnothing$. The \textit{semigroup free product} $S = S_1 \ast S_2$ is defined as $S = \pres{Sgp}{A_1 \cup A_2}{\sR_1 \cup \sR_2}$. We will identify $S$ with the semigroup whose elements are all finite non-empty alternating sequences $(s_1, s_2, \dots, s_n)$ of elements $s_i \in S_1 \cup S_2$, where \textit{alternating} means that $s_i$ and $s_{i+1}$ come from different factors for $1 \leq i < n$. We write $s_i \sim s_{j}$ if $s_i$ and $s_{j}$ come from the same factor, and $s_i \not\sim s_j$ otherwise. We always have $s_i \sim s_i$ and $s_i \not\sim s_{i+1}$. Given any non-empty sequence $s = (s_1, s_2, \dots, s_n)$ with $s_i \in S_1 \cup S_2$, we define the \textit{alternatisation} $s'$ of $s$ to simply be $s' = s$ if $s$ is alternating, and otherwise -- if, say, $s_i \sim s_{i+1}$ -- we define $s'$ as the alternatisation of $(s_1, \dots, s_i \cdot s_{i+1}, \dots, s_n)$. Clearly, the alternatisation of $s$ is a uniquely defined alternating sequence.

The product of two alternating sequences in $S$ is given by the alternatisation of the concatenation of the sequences. That is, explicitly, multiplication in $S$ is given by
\begin{equation}\label{Eq:sgp_fp_multiplication}
(s_1, \dots, s_n) \cdot (t_1, \dots, t_m) = \begin{cases}
			(s_1, \dots, s_n, t_1, \dots, t_m), & \text{if $s_n \not\sim t_1$,}\\
            (s_1,\dots, s_n t_1, \dots, t_m), & \text{otherwise.}
		 \end{cases}
\end{equation}
See e.g. \cite[Eq. 8.2.1]{Howie1995}. Note that, in particular, the semigroup free product of two monoids is never a monoid. We will now define monoid free products in a similar manner. 

\subsection{Monoid free products}\label{Subsec:Intro_MFP}

Let $M_1, M_2$ be monoids defined by $M_i = \pres{Mon}{A_i}{\sR_i}$ for $i=1,2$, assuming without loss of generality that $A_1 \cap A_2 = \varnothing$. The \textit{monoid free product} $M = M_1 \ast M_2$ is defined as $M = \pres{Mon}{A_1 \cup A_2}{\sR_1 \cup \sR_2}$. We will identify $M$ with the monoid whose elements are all finite reduced alternating sequences $(m_1, m_2, \dots, m_n)$ of elements $m_i \in M_1 \cup M_2$, where \textit{reduced} means that $m_i \neq 1$ for all $1 \leq i \leq n$. Given an alternating sequence $s = (s_1, s_2, \dots, s_n)$, we define the \textit{reduction} $s'$ of $s$ to be $s$ if $s$ is already reduced; and as the reduction of the alternatisation of the subsequence $(s_{i_1}, s_{i_2}, \dots, s_{i_k})$ consisting of precisely those $s_{i_j}$ which satisfy $s_{i_j} \neq 1$. Clearly, the reduction $s'$ of $s$ is a uniquely defined reduced alternating sequence. 

The product of two reduced sequences in $M$ is then defined as the reduction of the concatenation of the sequences. Hence, similar to \eqref{Eq:sgp_fp_multiplication}, we easily find an explicit expression for multiplication of elements in a monoid free product as:
\begin{equation}\label{Eq:mon_fp_multiplication}
(s_1, \dots, s_n) \cdot (t_1, \dots, t_m) = \begin{cases}
			(s_1, \dots, s_n, t_1, \dots, t_m), & \text{if $s_n \not\sim t_1$,}\\
            (s_1, \dots, s_n t_1, \dots, t_m), & \text{if $s_nt_1 \neq 1$,} \\
            (s_1, \dots, s_{n-1}) \cdot (t_2, \dots, t_m), & \text{if $s_nt_1 = 1$.}
		 \end{cases}
\end{equation}

See e.g. \cite[p. 266]{Howie1995}. Unlike the case of the semigroup free product, the empty sequence is always an identity element for $M$, so the free product of two monoids is always (obviously) a monoid. We also remark on the recursive definition of multiplication in the third case of \eqref{Eq:mon_fp_multiplication}. We may, of course, have that $s_{n-1} \cdot t_2 = 1$, in which case we continue reducing. In particular, the monoid free product of two groups is a group, and hence the monoid free product of two groups coincides with the usual group free product of the same groups.

\subsection{Alternating words and combings}\label{Subsec:Alternatingwords}

We will make the following definition of alternating words, which is useful in describing the language theory of free products. Let $R_1, R_2$ be regular languages over some alphabets $A_1, A_2$ respectively, with $A_1 \cap A_2 = \varnothing$ (and hence $R_1 \cap R_2 = \varnothing$ or $R_1 \cap R_2 = \{ \varepsilon \}$). Let $w \in (R_1 \cup R_2)^+$ be a non-empty word. Then we can factorise $w$ -- not necessarily uniquely! -- as a product $w \equiv x_1 x_2 \cdots x_n$, where for every $1 \leq i \leq n$ we have $x_i \in R_1 \cup R_2$ and $x_i \not\equiv \varepsilon$. Any such factorisation $x_1 x_2 \cdots x_n$ of $w$ gives rise to a parametrisation $X \colon \mathbb{N} \to \{ 1, 2\}$ uniquely defined on $\{ 1, \dots, n \}$ (as $A_1 \cap A_2 = \varnothing$) by $X(i) = j$ when $x_i \in X_j$. If $X(i) = X(i+1)$ for some $i$, then we will write this as $x_i \sim x_{i+1}$ (context will always make this slightly abusive notation clear). If $X$ is such that $x_i \not\sim x_{i+1}$, i.e. $X(i) \neq X(i+1)$, for all $1 \leq i < n$, which is to say that $X$ is a standard parametrisation when restricted to $\{ 1, \dots, n\}$, then we say that the factorisation $x_1 x_2 \cdots x_n$ of $w$ is \textit{alternating}. In this case, we may without loss of generality assume $X$ is a standard parametrisation. 

If $w$ admits an alternating factorisation, then we say that $w$ is an $(R_1, R_2)$-\textit{alternating word} (or simply \textit{alternating word}, if context makes the regular languages $R_1, R_2$ clear). It is clear that $w$ admits at most one alternating factorisation, and hence, if $w$ is an alternating word, then we may speak of \textit{the} alternating factorisation of $w$, with associated standard parametrisation $X$. We will for convenience always also say that the empty word is alternating, with the ``unique'' alternating factorisation $\varepsilon$ (if $\varepsilon \in R_1 \cap R_2$, then we simply for convenience choose $\varepsilon \in R_1$). Note that not every factorisation as a word over $(R_1 \cup R_2)^+$ of an alternating word is alternating. 

The language of all $(R_1, R_2)$-alternating words is regular, being the language
\begin{equation}\label{Eq:Alt_words_Def}
(R_1 R_2)^\ast \cup (R_2 R_1)^\ast \cup (R_1 R_2)^\ast R_1 \cup (R_2 R_1)^\ast R_2. 
\end{equation}

We will denote the language \eqref{Eq:Alt_words_Def} as $\Alt(R_1, R_2)$. We denote by $\Alt^+(R_1, R_2)$ the language $\Alt(R_1, R_2) - \{ \varepsilon \}$ of non-empty alternating words. 

\begin{lemma}\label{Lem:SGPFP-alt-is-combing}
Let $S_1, S_2$ be two semigroups, finitely generated by disjoint sets $A_1$ resp. $A_2$ and with regular combings $R_1$ resp. $R_2$. Then the language $\Alt^+(R_1, R_2)$ is a regular combing of the semigroup free product $S = S_1 \ast S_2$.
\end{lemma}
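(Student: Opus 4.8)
The plan is to verify directly the two defining properties of a regular combing for $S = S_1 \ast S_2$: that $\Alt^+(R_1,R_2)$ is a regular subset of $(A_1 \cup A_2)^+$, and that it surjects onto $S$ under the canonical homomorphism $\pi_S \colon (A_1 \cup A_2)^+ \to S$. Regularity has already been established, since $\Alt(R_1,R_2)$ is the language \eqref{Eq:Alt_words_Def} and $\Alt^+(R_1,R_2) = \Alt(R_1,R_2) - \{\varepsilon\}$; moreover, as each $R_i \subseteq A_i^+$, every alternating word is a non-empty word over $A_1 \cup A_2$. Thus the only property requiring genuine work is surjectivity of $\pi_S$ on $\Alt^+(R_1,R_2)$.

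For surjectivity I would first record how $\pi_S$ behaves on words over a single factor alphabet. For $u \in A_i^+$, the map $\pi_S$ restricted to $A_i^+$ equals the composite of $\pi_i \colon A_i^+ \to S_i$ with the canonical embedding $S_i \hookrightarrow S$: both are homomorphisms agreeing on the generators $A_i$, since the relations $\sR_i$ are contained in $\sR_1 \cup \sR_2$ (if $u \lra{\sR_i} v$ then $u \lra{\sR_1 \cup \sR_2} v$, so $u =_S v$). Under the identification of $S$ with alternating sequences, this means $\pi_S(u)$ is the singleton sequence $(\pi_i(u))$ for every $u \in A_i^+$.

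Next, take an arbitrary element $\sigma = (s_1, \ldots, s_n) \in S$, an alternating sequence in which each $s_k$ lies in a factor $S_{j_k}$ with $j_k \in \{1,2\}$ and $j_k \neq j_{k+1}$ for $1 \leq k < n$. Since $R_{j_k}$ is a combing of $S_{j_k}$, choose a word $w_k \in R_{j_k}$ with $\pi_{j_k}(w_k) = s_k$, and form $w \equiv w_1 w_2 \cdots w_n$. Because each $w_k$ is a non-empty word over $A_{j_k}$ and consecutive indices $j_k$ differ, this factorisation is alternating, whence $w \in \Alt^+(R_1,R_2)$. Applying the previous step factor by factor, $\pi_S(w)$ is the product $(s_1)(s_2)\cdots(s_n)$ of singleton sequences in $S$; and since the $s_k$ come from alternating factors, the multiplication rule \eqref{Eq:sgp_fp_multiplication} always falls into its first case, so iterating it yields exactly $(s_1, \ldots, s_n) = \sigma$. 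Hence $\sigma \in \pi_S(\Alt^+(R_1,R_2))$.

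As $\sigma$ was arbitrary, $\Alt^+(R_1,R_2)$ is a regular combing of $S$, which is what is required. The argument is essentially bookkeeping, and the closest thing to an obstacle is the need to confirm both that concatenating the chosen representatives produces a genuinely \emph{alternating} word and that no alternatisation collapse occurs when passing to $\pi_S$; both follow cleanly from the disjointness $A_1 \cap A_2 = \varnothing$ together with the alternation of the indices $j_k$, so no delicate case analysis is needed.
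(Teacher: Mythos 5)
Your proposal is correct and follows essentially the same route as the paper's proof: pick a representative $r_i \in R_{X(i)}$ for each entry of an alternating sequence, concatenate, and observe that the result lies in $\Alt^+(R_1,R_2)$ and maps onto the given element. The extra bookkeeping you supply (the behaviour of $\pi_S$ on $A_i^+$ and the check that no alternatisation collapse occurs) is left implicit in the paper but is the same argument.
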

\begin{proof}
Let $(s_1, s_2, \dots, s_k) \in S$ be an alternating sequence, with associated parametrisation $X$, i.e. so that $s_i \in S_{X(i)}$ for all $1 \leq i \leq k$. For every $1 \leq i \leq k$, there is some $r_i \in R_{X(i)}$ such that $\pi_{X(i)}(r_i) = s_i$, as $R_{X(i)}$ is a combing of $S_{X(i)}$. Hence $\pi(r_1 r_2 \cdots r_k) = (s_1, s_2, \dots, s_k)$, and as $r_1 r_2 \cdots r_k \in \Alt^+(R_1, R_2)$, we have the result. 
\end{proof}

Now the following follows immediately from Lemma~\ref{Lem:SGPFP-alt-is-combing} and standard normal form lemmas for semigroup free products. 

\begin{lemma}\label{Lem:SGPFP-combing-equality}
Let $S_1$ and $S_2$ be as in Lemma~\ref{Lem:SGPFP-alt-is-combing}. Let $S= S_1 \ast S_2$ denote their semigroup free product, and let $u, v \in R$ such that 
\[
u \equiv u_1 u_2 \cdots u_n, \quad v \equiv v_1 v_2 \cdots v_k
\]
are the unique alternating factorisations of $u$ and $v$, respectively, and with associated standard parametrisations $X$ resp. $Y$. Then $u =_S v$ if and only if 
\begin{enumerate}
\item $n = k$ and $X=Y$;
\item $u_i =_{S_{X(i)}} v_i$ for all $1 \leq i \leq n$.
\end{enumerate}
\end{lemma}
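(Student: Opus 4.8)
The statement to prove is Lemma~\ref{Lem:SGPFP-combing-equality}, which characterizes equality in a semigroup free product $S = S_1 \ast S_2$ in terms of the alternating factorizations of two combing words $u, v \in R = \Alt^+(R_1, R_2)$. Let me sketch how I'd prove this.
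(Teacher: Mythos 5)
Your proposal contains no proof. After the sentence ``Let me sketch how I'd prove this,'' the sketch never appears: there is no argument, no appeal to any structural fact about free products, and no verification of either direction of the equivalence. As written, this is only a restatement of the problem.

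For comparison, the paper treats this lemma as an immediate consequence of the normal form description of the semigroup free product set up in the preceding subsection: $S$ is \emph{identified} with the set of finite non-empty alternating sequences $(s_1,\dots,s_n)$ with $s_i \in S_1 \cup S_2$, so the image $\pi(u)$ of a word $u$ with alternating factorisation $u_1 \cdots u_n$ is exactly the alternating sequence $(\pi_{X(1)}(u_1), \dots, \pi_{X(n)}(u_n))$ (no alternatisation is needed, since consecutive factors come from different free factors by definition of an alternating factorisation). Two such sequences coincide if and only if they have the same length, the same parametrisation, and componentwise equal entries, which is precisely conditions (1) and (2). If you want to complete your write-up, this is the content you need to supply: identify $\pi(u)$ and $\pi(v)$ as alternating sequences, then invoke (or prove) the uniqueness of the alternating-sequence normal form in $S_1 \ast S_2$.
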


Finally, we give an explicit expression for how multiplication works in semigroup free products with respect to the combing $R$. 

\begin{lemma}\label{Lem:Mult_table_SGP_lemma}
Let $x \equiv x_1 x_2 \cdots x_n \in R$ be an alternating product such that $x_i \in R_{X(i)}$ for some standard parametrisation $X$. Let $w_1, w_2 \in R$ be such that $w_1 \cdot w_2 = x$ in $S$. Then one of the following holds:
\begin{enumerate}
\item For some $0 \leq k \leq n$, we have \begin{align*}
w_1 \equiv \overline{x}_1 \overline{x}_2 \cdots \overline{x}_k, \quad \textnormal{and} \quad w_2 \equiv \overline{x}_{k+1} \overline{x}_{k+2} \cdots \overline{x}_n,
\end{align*}
where $\overline{x}_j \in R_{X(j)}$ and $\overline{x}_j = x_j$ in $S_{X(j)}$ for all $0 \leq j \leq n$.
\item For some $0 \leq k \leq n$, we have 
\begin{align*}
w_1 \equiv \overline{x}_1 \overline{x}_2 \cdots \overline{x}_{k-1} x_k' \quad \textnormal{and} \quad w_2 \equiv x_k'' \overline{x}_{k+1} \cdots \overline{x}_n,
\end{align*}
where $x_k', x_k'' \in R_{X(k)}$ with $x_k = x_k' x_k''$ in $S_{X(k)}$, and $\overline{x}_j \in R_{X(j)}$ with $\overline{x}_j = x_j$ in $S_{X(j)}$ for all $0 \leq j < k$ and $k < j \leq n$.
\end{enumerate}
\end{lemma}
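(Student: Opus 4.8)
The plan is to pass from the words $w_1, w_2$ to the alternating sequences they represent in $S$, apply the explicit multiplication formula \eqref{Eq:sgp_fp_multiplication}, and then read off the block structure from the uniqueness of normal forms. First I would take the unique alternating factorisations $w_1 \equiv y_1 y_2 \cdots y_p$ and $w_2 \equiv z_1 z_2 \cdots z_q$, with standard parametrisations $Y$ and $Z$, so that $y_i \in R_{Y(i)}$ and $z_j \in R_{Z(j)}$. Then $w_1$ represents the alternating sequence $(a_1, \dots, a_p)$ with $a_i = \pi_{Y(i)}(y_i)$, and $w_2$ represents $(b_1, \dots, b_q)$ with $b_j = \pi_{Z(j)}(z_j)$, while $x$ represents $(\xi_1, \dots, \xi_n)$ with $\xi_i = \pi_{X(i)}(x_i)$. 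As $w_1, w_2 \in R = \Alt^+(R_1, R_2)$ are non-empty, we have $p, q \geq 1$.

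Next I would compute the product $(a_1, \dots, a_p) \cdot (b_1, \dots, b_q)$ using \eqref{Eq:sgp_fp_multiplication}, which bifurcates exactly according to whether $a_p \not\sim b_1$ (equivalently $Y(p) \neq Z(1)$) or $a_p \sim b_1$. In the first case the product is $(a_1, \dots, a_p, b_1, \dots, b_q)$, which is already alternating and hence is the normal form. In the second case it is the alternatisation $(a_1, \dots, a_{p-1}, a_p b_1, b_2, \dots, b_q)$, with $a_p b_1$ computed in the common factor $S_{Y(p)} = S_{Z(1)}$; this is again alternating, since $a_{p-1} \not\sim a_p \sim b_1 \not\sim b_2$, and hence is the normal form. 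The key structural point here is that in a semigroup free product no blocks are ever deleted, so these merged sequences are genuinely reduced with no further collapsing to worry about.

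Finally I would invoke the uniqueness of the alternating-sequence normal form, as packaged in Lemma~\ref{Lem:SGPFP-combing-equality}, to equate the computed normal form with $(\xi_1, \dots, \xi_n)$ block by block. In the first case this forces $n = p + q$ together with $a_i = \xi_i$ and $b_j = \xi_{p+j}$ in the respective factors; setting $k = p$, $\overline{x}_i := y_i$ for $i \leq p$ and $\overline{x}_{p+j} := z_j$, one obtains that $\overline{x}_j = x_j$ in $S_{X(j)}$, which is conclusion (1). In the second case it forces $n = p + q - 1$, with $a_i = \xi_i$ for $i < p$, $b_j = \xi_{p-1+j}$ for $j \geq 2$, and crucially $a_p b_1 = \xi_p$ in $S_{X(p)}$; setting $k = p$, $x_k' := y_p$, $x_k'' := z_1$, $\overline{x}_i := y_i$ for $i < p$ and $\overline{x}_{p-1+j} := z_j$ for $j \geq 2$, the relation $\pi(y_p z_1) = \xi_p$ gives $x_k = x_k' x_k''$ in $S_{X(k)}$, which is conclusion (2).

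The argument is in essence an index-bookkeeping exercise, so I do not expect a genuine obstacle. The single point that demands care is that Lemma~\ref{Lem:SGPFP-combing-equality} compares two elements of $R$ through their \emph{own} alternating factorisations, whereas the concatenation $w_1 w_2$ need not be an alternating word when $a_p \sim b_1$. The remedy, carried out above, is to first reduce the product to its normal form via \eqref{Eq:sgp_fp_multiplication} and only then apply the comparison, rather than attempting to match the literal word $w_1 w_2$ against $x$; the two cases of \eqref{Eq:sgp_fp_multiplication} are precisely what produce the two cases of the lemma.
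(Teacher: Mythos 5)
Your proposal is correct and follows exactly the route the paper takes: the paper's own proof is a one-line remark that the lemma ``follows directly from Lemma~\ref{Lem:SGPFP-combing-equality} and the multiplication \eqref{Eq:sgp_fp_multiplication}, with case (1) corresponding to the first case of \eqref{Eq:sgp_fp_multiplication} and case (2) to the second.'' You have simply written out the index bookkeeping that the paper leaves implicit, including the correct observation that one must pass to the alternating-sequence normal form of the product before invoking the uniqueness comparison.
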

\begin{proof}
This follows directly from Lemma~\ref{Lem:SGPFP-combing-equality} and the multiplication \eqref{Eq:sgp_fp_multiplication} in semigroup free products; case (1) corresponds to the first case of \eqref{Eq:sgp_fp_multiplication}, and case (2) corresponds to the second.
\end{proof}

We give a similar treatment regarding combings and \textit{monoid} free products. Let $M_1$ and $M_2$ be two monoids, generated by two finite disjoint sets $A_1$ resp. $A_2$. Let $M = M_1 \ast M_2$ denote their \textit{monoid} free product, and let $S$ denote their \textit{semigroup} free product.\footnote{To emphasise just how different $M$ and $S$ are, we note that $S$ is \textit{always} (!) an infinite semigroup, even if $M_1$ and $M_2$ are trivial monoids, whereas in this latter case $M$ would simply be trivial.} We let $A = A_1 \cup A_2$. Let $R_1, R_2$ be regular languages with $R_1 \subseteq A_1^\ast$ and $R_2 \subseteq A_2^\ast$, and with $R_1 \cap R_2 = \{ \varepsilon \}$. We say that a non-empty word $u \in \Alt(R_1, R_2)$, with alternating factorisation $u \equiv u_0 u_1 \cdots u_n$ and associated parametrisation $X$, is \textit{reduced} if $u_i \neq 1$ in $M_{X(i)}$ for all $0 \leq i \leq n$. The empty word is also declared to be reduced. Just as in the case of semigroup free products (Lemma~\ref{Lem:SGPFP-alt-is-combing}), it is easy to see that if $R_1, R_2$ are regular combings of $M_1$ resp. $M_2$, then $\Alt(R_1, R_2)$ is a regular combing of $M$. We write $R = \Alt(R_1, R_2)$. We have the following simple structural lemma, based on the identification of $M$ with the semigroup free product of $M_1$ by $M_2$ amalgamated over the trivial submonoid (see e.g. \cite[p. 266]{Howie1995}):

\begin{lemma}\label{Lem:Monoid-reduced-fp-NFT}
Let $u, v \in R \setminus \{ \varepsilon \}$ be reduced words. Then $u =_M v$ if and only if $u =_S v$.
\end{lemma}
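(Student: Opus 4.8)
The plan is to show that, for reduced words, the relations $u =_M v$ and $u =_S v$ are each governed by exactly the same normal-form data, so that the asserted equivalence becomes a direct comparison of two lists of conditions. Write the unique alternating factorisations as $u \equiv u_0 u_1 \cdots u_n$ and $v \equiv v_0 v_1 \cdots v_m$, with associated standard parametrisations $X$ and $Y$, and let $\overline{u}_i \in M_{X(i)}$ and $\overline{v}_j \in M_{Y(j)}$ denote the elements represented by the respective blocks. The key observation is that reducedness means $\overline{u}_i \neq 1$ and $\overline{v}_j \neq 1$ for all $i, j$, so that the alternating sequences $(\overline{u}_0, \dots, \overline{u}_n)$ and $(\overline{v}_0, \dots, \overline{v}_m)$ are \emph{already} reduced: they are precisely the reduced normal forms representing $u$ and $v$ in $M$, with no further cancellation occurring when passing from the raw factorisation to $M$.

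I would then read off each equality from the appropriate normal-form theorem. On the monoid side, invoking the standard uniqueness of reduced forms in a monoid free product — justified, as the statement indicates, by the identification of $M$ with the free product of $M_1$ and $M_2$ amalgamated over their trivial submonoids — the relation $u =_M v$ holds if and only if $n = m$, $X = Y$, and $\overline{u}_i = \overline{v}_i$ in $M_{X(i)}$ for every $i$. On the semigroup side, Lemma~\ref{Lem:SGPFP-combing-equality} applied to $S = S_1 \ast S_2$ gives that $u =_S v$ holds if and only if $n = m$, $X = Y$, and $u_i =_{S_{X(i)}} v_i$ for every $i$; since the factor $S_{X(i)}$ is just $M_{X(i)}$, the condition $u_i =_{S_{X(i)}} v_i$ is literally $\overline{u}_i = \overline{v}_i$. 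The two lists of conditions are identical, which yields the equivalence. (The direction $u =_S v \Rightarrow u =_M v$ also follows more cheaply, with no reducedness hypothesis, from the surjection $S \twoheadrightarrow M$ given by reduction; but the unified comparison dispatches both directions at once.)

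I expect the crux to be conceptual rather than computational: one must verify that reducedness is exactly the hypothesis preventing the $M$-normal form from differing from the raw data of the alternating factorisation. If some block $\overline{u}_i$ were trivial, then passing to $M$ would delete that block and re-alternatise, so the reduced form in $M$ would have strictly fewer blocks than the alternating factorisation of $u$, and the two characterisations above would diverge; this is precisely why the hypothesis that $u$ and $v$ be reduced is indispensable, and why the lemma must be stated with it. Making the uniqueness of reduced forms in the monoid (amalgamated) free product precise, and confirming that the semigroup combing normal form of Lemma~\ref{Lem:SGPFP-combing-equality} records the identical data $(n, X, (\overline{u}_i)_i)$, is therefore the only real content.
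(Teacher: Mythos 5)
Your proof is correct and follows essentially the route the paper intends: the paper states this lemma without a written proof, justifying it only by the identification of $M$ with the free product of $M_1$ and $M_2$ amalgamated over the trivial submonoid, and your argument is exactly the standard unwinding of that identification --- both $u =_M v$ and $u =_S v$ reduce, via uniqueness of reduced alternating forms on the monoid side and Lemma~\ref{Lem:SGPFP-combing-equality} on the semigroup side, to the identical list of conditions $n = m$, $X = Y$, and $\overline{u}_i = \overline{v}_i$ in $M_{X(i)}$ for all $i$. Your identification of reducedness as precisely the hypothesis guaranteeing that the alternating factorisation is already the $M$-normal form is the right crux, and the parenthetical observation that $u =_S v \Rightarrow u =_M v$ holds unconditionally is also correct.
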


Of course, this lemma would fail spectacularly if the \textit{reduced} condition is removed. Despite this connection between $M$ and $S$, there is one important distinction to make from the semigroup free product case: when multiplying the alternating word $u_0 u_1 \cdots u_k$ by the alternating word $v_0 v_1 \cdots v_n$ in a monoid free product, if we are in the case $u_k \sim v_0$ we may, of course, have $u_k v_0 =_{M_i} 1$ for $i=1$ or $2$. Unlike the case of semigroup free products, this now means that $u_k v_0 =_M 1$. Hence the multiplication table for $M$ with respect to the regular combing $R$ is mostly made up of the multiplication table for $S$, but with one additional case. We spell the above out in somewhat more technical language below:

\begin{lemma}\label{Lem:Mult_table_MON_lemma}
Let $x \equiv x_1 x_2 \cdots x_n \in R$ be reduced, with $x_i \in R_{X(i)}$ for some standard parametrisation $X$. Let $w_1, w_2 \in R$ be reduced with $w_1 \cdot w_2 = x$ in $M$. Then one of the following holds:
\begin{enumerate}
\item For some $0 \leq k \leq n$, we have \begin{align*}
w_1 \equiv \overline{x}_1 \overline{x}_2 \cdots \overline{x}_k, \quad \textnormal{and} \quad w_2 \equiv \overline{x}_{k+1} \overline{x}_{k+2} \cdots \overline{x}_n,
\end{align*}
where $\overline{x}_j \in R_{X(j)}$ and $\overline{x}_j = x_j$ in $M_{X(j)}$ for all $0 \leq j \leq n$.
\item For some $0 \leq k \leq n$, we have 
\begin{align*}
w_1 \equiv \overline{x}_1 \overline{x}_2 \cdots \overline{x}_{k-1} x_k' \quad \textnormal{and} \quad w_2 \equiv x_k'' \overline{x}_{k+1} \cdots \overline{x}_n,
\end{align*}
where $x_k', x_k'' \in R_{X(k)}$ with $1 \neq x_k = x_k' x_k''$ in $M_{X(k)}$, and $\overline{x}_j \in R_{X(j)}$ with $\overline{x}_j = x_j$ in $M_{X(j)}$ for all $0 \leq j < k$ and $k < j \leq n$.
\item For some $k \geq 0$ and $m \geq n$, we have 
\begin{align*}
w_1 \equiv \overline{x}_1 \overline{x}_2 \cdots \overline{x}_{k-1} x_k' \quad \textnormal{and} \quad w_2 \equiv x_k'' \overline{x}_{k+1} \cdots \overline{x}_m,
\end{align*}
where $x_k', x_k'' \in R_{X(k)}$ with $x_k' x_k'' = 1$ in $M_{X(k)}$. Furthermore, setting 
\begin{align*}
w_1' \equiv \overline{x}_1 \overline{x}_2 \cdots \overline{x}_{k-1} \quad \text{and} \quad w_2' \equiv \overline{x}_{k+1} \overline{x}_{k+2} \cdots \overline{x}_m,
\end{align*}
we have $w_1', w_2' \in R$ and $w_1' \cdot w_2' = x$ in $M$.
\end{enumerate}
\end{lemma}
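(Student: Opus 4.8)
The plan is to read the three cases off the explicit multiplication rule \eqref{Eq:mon_fp_multiplication} directly, using the uniqueness of the reduced alternating-sequence representation of an element of $M$ (the identification of \S\ref{Subsec:Intro_MFP}), equivalently Lemma~\ref{Lem:Monoid-reduced-fp-NFT} together with Lemma~\ref{Lem:SGPFP-combing-equality}. First I would write the alternating factorisations of the two reduced combing words as $w_1 \equiv p_1 p_2 \cdots p_a$ and $w_2 \equiv q_1 q_2 \cdots q_b$, with $p_i \in R_{P(i)}$ and $q_j \in R_{Q(j)}$ for standard parametrisations $P, Q$; these represent reduced alternating sequences of $M$. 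The product $w_1 \cdot w_2 =_M x$ is then evaluated by \eqref{Eq:mon_fp_multiplication}, and the three-way split there, namely $p_a \not\sim q_1$, or $p_a \sim q_1$ with $p_a q_1 \neq 1$ in $M_{P(a)}$, or $p_a \sim q_1$ with $p_a q_1 = 1$ in $M_{P(a)}$, is exactly the split into cases (1), (2), and (3).

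Next I would treat the two non-cancelling possibilities together, as these are simply the statement of Lemma~\ref{Lem:Mult_table_SGP_lemma} transported from $S$ to $M$. If $p_a \not\sim q_1$ the product is represented by the reduced sequence $(\overline p_1, \dots, \overline p_a, \overline q_1, \dots, \overline q_b)$, and if $p_a \sim q_1$ with $p_a q_1 \neq 1$ in $M_{P(a)}$ it is represented by the reduced sequence $(\overline p_1, \dots, \overline p_{a-1}, \overline{p_a}\,\overline{q_1}, \overline q_2, \dots, \overline q_b)$. Since each of these equals the reduced sequence representing $x = (x_1, \dots, x_n)$, uniqueness of the reduced representation forces matching lengths, matching parametrisations, and syllable-by-syllable equality in the respective free factors. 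Setting $k = a$ and $\overline x_j \equiv p_j$ (resp. $\overline x_j \equiv q_{j-k}$) for the appropriate ranges of $j$ then yields case (1) in the first situation, and case (2) in the second, with $x_k' \equiv p_a$ and $x_k'' \equiv q_1$ so that $1 \neq x_k = x_k' x_k''$ in $M_{X(k)}$.

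The remaining possibility $p_a \sim q_1$ with $p_a q_1 = 1$ in $M_{P(a)}$ is the genuinely new case (3), where the argument departs from the semigroup setting. Here I would put $k = a$, $x_k' \equiv p_a$, and $x_k'' \equiv q_1$, so $x_k' x_k'' = 1$ in $M_{X(k)}$, and set $w_1' \equiv p_1 \cdots p_{a-1}$ and $w_2' \equiv q_2 \cdots q_b$. A prefix of an alternating factorisation is again alternating, as is a suffix, and $\varepsilon \in R$, so both $w_1'$ and $w_2'$ lie in $R$; relabelling the syllables of $w_2$ as $q_1 \equiv x_k''$ and $q_j \equiv \overline x_{k+j-1}$ for $2 \le j \le b$ casts $w_1, w_2, w_1', w_2'$ into the exact displayed form with $m = a + b - 1$. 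Finally, since $p_a q_1 =_M 1$, cancelling this pair gives $w_1' \cdot w_2' =_M (p_1 \cdots p_{a-1})(q_2 \cdots q_b) =_M w_1 w_2 =_M x$, the asserted descent.

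The one point needing genuine care, and the main (modest) obstacle, is the bookkeeping in case (3): verifying that deleting the cancelling pair leaves alternating words of $R$, and that the index bound $m \ge n$ holds. The latter follows from a syllable count, since $w_1 w_2$ has $a + b$ syllables, reduction to the normal form of $x$ only ever decreases the syllable count, and at least one reduction (the cancellation $p_a q_1 =_M 1$) occurs; hence $n \le a + b - 1 = m$. Everything else is a direct transcription of \eqref{Eq:mon_fp_multiplication}, and I would note in passing that cases (1) and (2) could equally be cited verbatim from Lemma~\ref{Lem:Mult_table_SGP_lemma} via Lemma~\ref{Lem:Monoid-reduced-fp-NFT}, so that only case (3) is new.
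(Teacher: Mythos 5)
Your proposal is correct and follows essentially the same route as the paper, which in fact offers only a brief remark in place of a proof: cases (1) and (2) are inherited from Lemma~\ref{Lem:Mult_table_SGP_lemma} via Lemma~\ref{Lem:Monoid-reduced-fp-NFT} when the concatenation $w_1 w_2$ is reduced, and case (3) corresponds to the third case of \eqref{Eq:mon_fp_multiplication}. Your write-up simply fills in the bookkeeping (uniqueness of the reduced alternating sequence, the syllable count giving $m \geq n$) that the paper leaves implicit.
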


Cases (1) and (2) are ``inherited'' from $S$ via combining Lemma~\ref{Lem:Mult_table_SGP_lemma} and Lemma~\ref{Lem:Monoid-reduced-fp-NFT} in the case that the concatenation $w_1 w_2$ is reduced, while case (3) corresponds to case (3) in Lemma~\ref{Lem:Monoid-reduced-fp-NFT}. This case (3) highlights the recursive nature of reduction in free products (cf. e.g. free reduction), and this recursion eventually terminates as $|w_j'|<|w_j|$ for $j=1, 2$. We give an example of Lemma~\ref{Lem:Mult_table_MON_lemma} below, in the case of the free product of two copies of the bicyclic monoid. 

\begin{example}
Let $M_i = \pres{Mon}{b_i, c_i}{b_ic_i = 1}$ for $i=1,2$ be two copies of the bicyclic monoid, and let $R_i = c_i^\ast b_i^\ast$. Let $x \equiv b_2^2$, and let $w_1 \equiv b_2 b_1$, $w_2 \equiv c_1 b_2$. Then 
\[
w_1 w_2 = b_2 b_1 c_1 b_2 = b_2^2 \equiv x,
\]
in $M$, so we can apply Lemma~\ref{Lem:Mult_table_MON_lemma}. Indeed, we find that we are in case (3), taking $k=2$ and $m=3$, $\overline{x}_1 \equiv b_2, x_2' \equiv b_1$ and $x_2'' \equiv c_1, \overline{x}_3 \equiv b_2$, for then 
\[
x_k' x_k'' \equiv x_2' x_2'' \equiv b_1 c_1 = 1
\]
in $M_1$, and we have $w_1' \equiv \overline{x}_1 \equiv b_2$ and $w_2' \equiv \overline{x}_3 \equiv b_2$, and this satisfies $w_1' \cdot w_2' \equiv b_2^2 =_M x$. We may reapply Lemma~\ref{Lem:Mult_table_MON_lemma}, and find ourselves in case (1), taking $\overline{x}_1 \equiv b_2$ and $\overline{x}_n \equiv b_2$. 
\end{example}

These were all the statements we shall require about free products for the sequel.

\subsection{$\ETOL$ and substitutions}\label{Subsec:ETOL-subst}

Word-hyperbolicity is intricately connected with $\CF$-multiplication tables. However, our results will be true more generally, substituting e.g. $\ETOL$ or $\IND$ for $\CF$, and we will elaborate on this topic in \S\ref{Sec:Finalremarks}. Specifically, the proofs of the main results about preservation properties in free products of word-hyperbolic algebraic structures (semigroups, monoids, or groups) in \S\ref{Sec:SGPFP} and \S\ref{Sec:MFP} will all be applicable to free products of algebraic structures with $\cc$-multiplication tables, where $\cc$ is some full $\AFL$ satisfying the monadic ancestor property. This will include the cases when $\cc$ is one of $\CF, \IND$, or $\ETOL$. We will give a brief overview of the strong historical connections between $\ETOL$ and the monadic ancestor property. This is a complex history; we cannot do it full justice here, and it will be expanded on in a future survey article. 

We give the definition of a substitution. Let $A$ be an alphabet. For each $a \in A$, let $\sigma(a)$ be a language (over any finite alphabet); let $\sigma(\varepsilon) = \{ \varepsilon\}$; for every $x, y \in A^\ast$ let $\sigma(xy) = \sigma(x)\sigma(y)$; and for every $L \subseteq A^\ast$, let $\sigma(L) = \bigcup_{w\in L} \sigma(w)$. We then say that $\sigma$ is a \textit{substitution}. For a class $\cc$ of languages, if for every $a \in A$ we have $\sigma(a) \in \cc$, then we say that $\sigma$ is a $\cc$-\textit{substitution}. Let $A$ be an alphabet, and $\sigma$ a substitution on $A$. For every $a \in A$, let $A_a$ denote the smallest finite alphabet such that $\sigma(a) \subseteq A_a^\ast$. Extend $\sigma$ to $A \cup (\bigcup_{a \in A}A_a)$ by defining $\sigma(b) = \{ b \}$ whenever $b \in (\bigcup_{a \in A} A_a) \setminus A$. For $L \subseteq A^\ast$, let $\sigma^1(L) = \sigma(L)$, and let $\sigma^{n+1}(L) = \sigma(\sigma^n(L))$ for $n \geq 1$. Let $\sigma^\infty(L) = \bigcup_{n> 0} \sigma^n(L)$. Then we say that $\sigma^\infty$ is an \textit{iterated substitution}. If for every $b \in A \cup (\bigcup_a A_a)$ we have $b \in \sigma(b)$, then we say that $\sigma^\infty$ is a \textit{nested} iterated substitution. If $\sigma^\infty$ is nested, then it is convenient for inductive purposes to set $\sigma^0(L) := L$. Note that the nested property ensures $L \subseteq \sigma(L)$, so $\bigcup_{n \geq 0} \sigma^n(L) = \bigcup_{n>0} \sigma^n(L)$. We say that $\cc$ is closed under nested iterated substitution if for every $\cc$-substitution $\sigma$ and every $L \in \cc$, we have: if $\sigma^\infty$ is a nested iterated substitution, then $\sigma^\infty(L) \in \cc$. A similar definition yields closure under iterated substitutions. For the benefit of the reader, we mention two facts, sourced below: the class $\CF$ is closed under nested iterated substitution (but not iterated substitution), and the class $\ETOL$ is closed under iterated substitution (and hence also nested iterated substitution). 

Substitutions are closely related to $\AFL$s. Indeed, the 1967 article by Greibach \& Ginsburg which first defined $\AFL$s \cite{Ginsburg1967} (later expanded in \cite{Ginsburg1969a}) included a proof about a form of substitution-closure for $\AFL$s (under $\varepsilon$-free regular substitutions) and for full $\AFL$s (under regular substitutions). The closure of $\CF$ under nested iterated substitution was proved by Kr\'al \cite{Kral1970} in 1970.  Following some further results (e.g. \cite{Ginsburg1970b}), an abstract basis for substitution was developed by Ginsburg \& Spanier \cite{Ginsburg1970}; one particular important notion developed there was treating the (nested) substitution-closure of a full $\AFL$ as a form of ``algebraic closure''. In particular, it is proved that the substitution-closure of a full $\AFL$ is a full $\AFL$ \cite[Theorem~2.1]{Ginsburg1970}. Lewis \cite{Lewis1970} used substitution to \textit{define} full $\AFL$s, and rediscovers the aforementioned result by Ginsburg \& Spanier, see \cite[Theorem~1.13]{Lewis1970}. See also \cite{Asveld1977, Beauquier1981}.

Substitutions can be useful in studying full $\AFL$s for a number of reasons; for example, one can recover results of Ginsburg \& Greibach \cite{Ginsburg1970b} about principal $\AFL$s, see \cite[Corollary~1.21]{Lewis1970}. One can also use substitution-based ideas to produce (see \cite[Corollary~4.13]{Christensen1974}) an infinite strict hierarchy 
\[
\CF \subsetneq \cc_1 \subsetneq \cc_2 \subsetneq \cdots \subsetneq \cc_i \subsetneq \cdots \subsetneq \ETOL
\]
of full $\AFL$-s $\cc_i$ between the classes $\CF$ and $\ETOL$, cf. also \cite{Greibach1970b, Latteux1979} for related such hierarchies; for similar hierarchies between $\ETOL \subset \IND$, see \cite{Ehrenfeucht1975, Engelfriet1985}; and for infinite hierarchies between $\IND \subset \CS$, see \cite{Asveld1975, Engelfriet1981}. 

Because of the importance and utility of iterated substitution, Greibach \cite{Greibach1969b} (later expanded in \cite{Greibach1970}) defined \textit{super}-$\AFL$s as a full $\AFL$ closed under nested iterated substitution (by \cite[Proposition~2.2]{NybergBrodda2021f}, this is equivalent to the definition of super-$\AFL$ as defined in \S\ref{Subsec:Rewritingsystems}). Not long after, the notion of a \textit{hyper}-$\AFL$ was introduced, being any full $\AFL$ closed under iterated substitution \cite{Salomaa1974, Asveld1977b}.\footnote{P. R. J. Asveld \cite[p. 1]{Asveld2001} on this point says the following: ``Similar as in ordinary algebra -- where one went from groups to semigroups, rings, and fields -- full $\AFL$s gave rise to weaker structures (full trios, full semi$\AFL$s) and more powerful ones: full substitution-closed $\AFL$s, full super-$\AFL$s, and full hyper-$\AFL$s.'' We cannot agree with this assessment of the historical development of ``ordinary'' algebra. Finite fields and groups were intricately connected already in the early works of both Lagrange and Galois (cf. \cite{Neumann2011}), whereas rings and semigroups would not appear as objects of study until half a century resp. a century later. Similarly, Klein initially posed an axiomatisation of \textit{group} as what we today call a \textit{monoid}, but as Lie ``in his study of infinite groups saw it as necessary to expressly require [the existence of inverses]'', it was this axiomatisation that was chosen (``...sah sich Lie gen\"otigt, ausdr\"ucklich zu verlangen...'', \cite[p. 335]{Klein1979}). We strongly recommend the interested reader to consult Wu{\ss}ing \cite{Wussing1969}. The above paragraph shows the difficulty in simplifying the development of ordinary algebra in a linear manner; and one may feel similarly about the linear narrative regarding $\AFL$s.} Many fundamental results about hyper-$\AFL$s and substitution were developed by Christensen \cite{Christensen1974}, who also, along with Asveld \cite{Asveld1977b}, fleshed out the connections between $\ETOL$ and hyper-$\AFL$s noted by e.g. Salomaa \cite{Salomaa1973, Salomaa1974} and \v{C}ul\'ik \cite{Culik1975}; cf. also \cite{Engelfriet1976, Drewes2004}. In particular, at this point we arrive at the following rather pleasant result: 

\begin{theorem}\label{Thm:CF_ETOL_smallest_AFLs}
\begin{enumerate}
\item{\cite[Theorem~2.2]{Greibach1970}} The class $\CF$ is the least super-$\AFL$.
\item{\cite[Corollary~4.10]{Christensen1974}} The class $\ETOL$ is the least hyper-$\AFL$.
\end{enumerate}
\end{theorem}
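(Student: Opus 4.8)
The plan is to prove each part by the two inclusions that ``least'' demands: that the named class is itself a super-$\AFL$ (resp.\ hyper-$\AFL$), and that it embeds into every other such class. The two ``is a super/hyper-$\AFL$'' directions are the structural closure facts, while the two minimality directions are where a uniform construction does the work.

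For part (1), that $\CF$ is a full $\AFL$ is standard, and that it has the monadic ancestor property is exactly Example~\ref{Ex:CF-is-super-afl}; via the equivalence recorded in \S\ref{Subsec:ETOL-subst} between the monadic ancestor property and closure under nested iterated substitution (Kr\'al's theorem), $\CF$ is a super-$\AFL$ in the sense of Definition~\ref{Def:super-AFL}. For minimality, let $\cc$ be any super-$\AFL$ and let $L \in \CF$ be generated by a grammar $(N, T, P, S)$. Since $\cc$ is a full $\AFL$ it contains $\REG$, hence every finite language. I would define a substitution $\sigma$ on $N$, extended by the identity on $T$, by $\sigma(A) = \{ A \} \cup \{ \alpha \mid (A \to \alpha) \in P \}$ for $A \in N$. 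Each $\sigma(A)$ is finite, so $\sigma$ is a $\cc$-substitution, and the inclusion $A \in \sigma(A)$ makes $\sigma^\infty$ a \emph{nested} iterated substitution. The key observation is that $\sigma^\infty(\{ S \})$ is precisely the set of sentential forms reachable from $S$: the identity option allows a single nonterminal to be rewritten while all others are held fixed, so every derivation of $G$ is simulated step by step, and conversely every word produced is a sentential form. Closure of $\cc$ under nested iterated substitution then gives $\sigma^\infty(\{ S \}) \in \cc$, and intersecting with the regular language $T^\ast$ yields $L = \sigma^\infty(\{ S \}) \cap T^\ast \in \cc$. Hence $\CF \subseteq \cc$.

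For part (2), that $\ETOL$ is a full $\AFL$ closed under iterated substitution is the corresponding structural fact from $\mathbf{L}$-system theory, which I would take as the definitional closure property of the family. For minimality, let $\cc$ be a hyper-$\AFL$ and let $H = (V, T, \{ \tau_1, \dots, \tau_k \}, \omega)$ be an $\ETOL$ system; each table $\tau_j$ is a finite substitution, hence a $\cc$-substitution, and $\{ \omega \} \in \REG \subseteq \cc$. The naive attempt --- iterating the single substitution $\sigma(a) = \{ a \} \cup \bigcup_{j} \tau_j(a)$ --- \emph{fails}, because within one application of $\sigma$ distinct occurrences of letters may be rewritten by distinct tables, whereas a single $\ETOL$ step demands that one table act uniformly across the entire string. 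Recovering this synchronisation is the crux, and I expect it to be the main obstacle: a substitution is inherently local, so it cannot broadcast a single table-choice to all letters of the current word.

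The fix is to let the iterated substitution over-generate and then cut back using the $\AFL$ operations. Concretely, I would enlarge the alphabet so that each rewrite also records the index of the table it used, iterate the resulting finite (hence $\cc$-)substitution --- legitimate since $\cc$ is closed under iterated substitution --- and then apply an intersection with a suitable regular language together with a coding homomorphism to retain only those tagged words whose recorded table-indices are globally consistent, i.e.\ correspond to genuinely uniform steps. The delicate point is designing the encoding so that this uniformity-per-step condition becomes enforceable by the full $\AFL$ machinery (coding, inverse homomorphism, intersection with regular languages); this bookkeeping is the technical heart of the argument, carried out in detail by Christensen. Granting it, the filtered, decoded language is exactly $L(H)$, whence $\ETOL \subseteq \cc$ and part (2) follows.
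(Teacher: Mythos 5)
This statement is quoted background in the paper: both parts are attributed to the literature (Greibach for (1), Christensen for (2)) and no proof is given, so there is no in-paper argument to compare against. Your part (1) is a correct, essentially self-contained proof along the classical lines: $\CF$ is a super-$\AFL$ by Example~\ref{Ex:CF-is-super-afl}, and for minimality the finite substitution $\sigma(A) = \{A\} \cup \{\alpha \mid A \to \alpha \in P\}$ is nested and $\sigma^\infty(\{S\})$ is exactly the set of sentential forms (the identity option serialises the parallel rewriting, and conversely every parallel step is a legal multi-step derivation), so $L = \sigma^\infty(\{S\}) \cap T^\ast$ lies in any super-$\AFL$. One could equally run this directly through the monadic ancestor property by taking the reversed productions $(\alpha, A)$ as a $\cc$-monadic system and computing the ancestors of $\{S\}$, which avoids appealing to the equivalence with closure under nested iterated substitution; either route is fine.

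Part (2) has a genuine gap, and you have in effect flagged it yourself. You correctly identify why iterating $\sigma(a) = \{a\} \cup \bigcup_j \tau_j(a)$ over-generates (table choices need not be uniform across a string), but the repair you sketch --- tag each letter with the index of the table that produced it, iterate, then intersect with a regular language to enforce global consistency --- does not work as stated. Over a finite alphabet a letter can carry only the tag of the \emph{last} step of its derivation history; the tags from all intermediate levels are overwritten, so a regular filter (indeed any filter) applied to the final word can certify at most that the final parallel step used a single table, not that every level of the derivation did. Since the whole content of the inclusion $\ETOL \subseteq \cc$ is precisely this level-by-level synchronisation, writing that the bookkeeping ``is carried out in detail by Christensen; granting it\ldots'' amounts to citing the theorem rather than proving it (and the ``$\ETOL$ is a hyper-$\AFL$'' direction is likewise taken on faith). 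As it stands, your argument establishes (1) but only reduces (2) to the very reference the paper already gives.
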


Furthermore, one can also show that $\IND$ is a super-$\AFL$ \cite{Downey1974}.

As mentioned, the connections between substitutions and $\ETOL$ remain active research topics (if somewhat implicitly), but are far too numerous to recount here. While they will be given a proper treatment in the future, we mention a few. For example, one can give a complexity analysis of iterated substitutions, with applications to both $\ETOL$ and $\EDTOL$ languages \cite{Asveld1987}, and there are connections with fuzzy logic \cite{Asveld2003}. One can also extend the notion of substitution to ``deterministic substitution'' (which will not be defined here), leading to a statement analogous to the fact that $\EDTOL$ is the least dhyper-$\AFL$ \cite[Corollary~4.5]{Asveld1977}; see also \cite{Latteux1977} for more on $\EDTOL$ and substitutions. At this point, it bears mentioning that there is a great deal of involved and often obfuscating notation and abbreviations; as an example, we have that ``if $K$ is a pseudoid, then $\eta(K)$ is the smallest full dhyper-$\operatorname{QAFL}$ containing $K$'' \cite[Theorem~4.5]{Asveld1977}. In addition, there are a great number of abbreviations for classes of languages associated with Lindenmayer systems (yielding the $\mathbf{L}$); aside from $\ETOL$ and $\EDTOL$, we have e.g. 
\[
\mathbf{L}, \mathbf{0L}, \mathbf{P0L}, \mathbf{T0L}, \mathbf{E0L}, \mathbf{X0L}, \mathbf{EP0L}, \mathbf{FE0L}(k), \mathbf{EPT0L}, \mathbf{FEPT0L}(k), \dots 
\]
see e.g. \cite{Meduna2003} for a large number of these.\footnote{Given the number of abbreviations, one may reasonably inquire about the language-theoretic properties of the language of all abbreviations of classes of languages.} We ensure the reader not familiar with this multitude of notation that most, if not all, such classes are generally defined (or definable) by relatively straightforward means; cf. e.g. the definition of $\ETOL$ as given by Theorem~\ref{Thm:CF_ETOL_smallest_AFLs}(2). Furthermore, the reader may notice, in the subsequent sections, the importance of substitution in dealing with free products -- this link between the algebraic and the formal language theoretic runs deep, and there seems to be ample opportunity to develop it further.

\section{Free products of word-hyperbolic semigroups}\label{Sec:SGPFP}

In this section, we will prove the main result regarding semigroup free products and word-hyperbolicity (Theorem~\ref{Thm:Sgp_extendable_result}). 

Let $S_1, S_2$ be two semigroups, finitely generated by disjoint sets $A_1$ resp. $A_2$ and with regular combings $R_1$ resp. $R_2$. Let $S = S_1 \ast S_2$ denote the semigroup free product of $S_1$ and $S_2$. We begin by recalling (Lemma~\ref{Lem:SGPFP-alt-is-combing}) that the language $\Alt^+(R_1, R_2)$ of alternating words is a combing for $S$. Let $R = \Alt^+(R_1, R_2)$. This will, in the sequel, be our chosen combing for proving that the table $\cT_S(R)$ is context-free when the factors $S_1, S_2$ are word-hyperbolic.

\begin{theoremB}
Let $S_1, S_2$ be $1$-extendable word-hyperbolic semigroups. Then the free product $S_1 \ast S_2$ is word-hyperbolic. 
\end{theoremB}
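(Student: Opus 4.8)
The combing we use is $R = \Alt^+(R_1,R_2)$, shown to be a combing for $S$ in Lemma~\ref{Lem:SGPFP-alt-is-combing}, and the task is to prove $\cT_S(R) \in \CF$. The structural Lemma~\ref{Lem:Mult_table_SGP_lemma} tells us exactly what a table element looks like: writing $w \equiv x_1 \cdots x_n$ for the alternating factorisation of the product, a triple $(u,v,w)$ with $uv =_S w$ yields the word
\[
\overline{x}_1 \cdots \overline{x}_k \,\#_1\, \overline{x}_{k+1} \cdots \overline{x}_n \,\#_2\, x_n^{\trev} \cdots x_1^{\trev},
\]
together with a variant in which the cut falls inside a block $x_k = x_k' x_k''$, where each uncut block satisfies $\overline{x}_i =_{S_{X(i)}} x_i$. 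The crucial observation is that this is a \emph{nested} object: the outer block $\overline{x}_1$ is paired with the outermost $x_1^{\trev}$, the next block with $x_2^{\trev}$, and so on inwards to the central $\#_2$. Each uncut pairing $\overline{x}_i \leftrightarrow x_i^{\trev}$ is precisely an instance of the factor equality table $Q_j = \{ p\,\#_2\, r^{\trev} \mid p, r \in R_j,\ p =_{S_j} r \}$, which is context-free by $1$-extendability --- this is exactly the language shown to lie in $\CF$ inside the proof of Lemma~\ref{Lem:right-stabiliser-extends} --- while the single cut block is an instance of $\cT_{S_{X(k)}}(R_{X(k)})$, context-free by word-hyperbolicity. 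Thus the two hypotheses supply precisely the two context-free ``block tables'' out of which every element of $\cT_S(R)$ is assembled.

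The plan is to realise this nesting as an ancestor set under a $\CF$-monadic rewriting system and then appeal to the monadic ancestor property of $\CF$ (Example~\ref{Ex:CF-is-super-afl}). I would introduce two fresh symbols $d_1, d_2$, where $d_j$ records ``a completed type-$j$ block'', and define a rewriting system $\sR$ on $A_\# \cup \{d_1, d_2\}$ that collapses one level of nesting at a time. The rules fall into a handful of families, all of the monadic form $(\ell, d_j)$: an innermost rule $p\,\#_2\,q \to d_j$ consuming the unique central marker; a nesting rule $p\, d_{3-j}\, q \to d_j$ whose insistence on the opposite subscript $d_{3-j}$ enforces the alternation of factors; and the corresponding versions carrying the unique $\#_1$, either at a block boundary (the first case of Lemma~\ref{Lem:Mult_table_SGP_lemma}) or inside the cut block, where the left-hand side is read off from $\cT_{S_j}(R_j)$ (its second case). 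In each family the left-hand-side language is obtained from $Q_j$ or from $\cT_{S_j}(R_j)$ by a finite substitution of its central marker, an operation internal to $\CF$; hence every rule-family is context-free and $\sR$ is $\CF$-monadic. A genuine table element collapses innermost-first all the way to a single letter $d_{X(1)}$, which leads to the identity
\[
\cT_S(R) \;=\; \anc{\sR}(\{d_1, d_2\}) \,\cap\, \big(R\,\#_1\,R\,\#_2\,R^{\trev}\big).
\]

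Granting this identity, the conclusion is immediate: $\anc{\sR}(\{d_1,d_2\}) \in \CF$ by the monadic ancestor property, and intersecting with the regular language $R\,\#_1\,R\,\#_2\,R^{\trev}$ both preserves context-freeness and enforces the correct global format (exactly one $\#_1$ before exactly one $\#_2$, with all three factors in $R$), so $\cT_S(R) \in \CF$ and $S$ is word-hyperbolic. I expect the identity itself to be the main obstacle. The inclusion $\subseteq$ is the easy direction, being the innermost-out collapse dictated by Lemma~\ref{Lem:Mult_table_SGP_lemma}; the reverse inclusion is where the care lies, since one must rule out spurious reductions. Here the disjointness $A_1 \cap A_2 = \varnothing$ does the work: arguing by induction on the length of a collapse, a type-$j$ rule can only match a maximal $A_j$-block and can never straddle a factor boundary, so any marker-correct, $d$-free ancestor is forced to be a genuine alternating product. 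Finally, I would note that the argument uses nothing about $\CF$ beyond the $\AFL$ operations and the monadic ancestor property, so it transfers verbatim to any reversal-closed super-$\AFL$, which is what ultimately yields the more general Theorem~\ref{Thm:A'}.
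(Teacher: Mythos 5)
Your proposal is correct and follows essentially the same route as the paper: realise $\cT_S(R)$ as the intersection of the regular language $R\,\#_1\,R\,\#_2\,R^{\trev}$ with the ancestor set, under a $\CF$-monadic rewriting system assembled from the factors' tables, of a finite target, then invoke the monadic ancestor property, with $1$-extendability entering in both arguments precisely to make the ``uncut block'' equality languages context-free. The only divergence is bookkeeping: the paper collapses everything to $\#_2$ (target word $\#_1\#_2$) and erases the surplus $\#_1$-symbols afterwards with a rational transduction $\tau_0$, whereas you collapse to fresh symbols $d_1, d_2$ whose alternating subscripts enforce the factor alternation inside the rules themselves; the two devices are interchangeable.
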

\begin{proof}
Suppose, for $i=1, 2$, that $S_i$ is generated by the finite set $A_i$, and that $S_i$ is word-hyperbolic with respect to the regular combing $R_i \subseteq A_i^+$, with the multiplication table $\mathcal{T}(R_i)$ context-free. We assume without loss of generality that $A_1 \cap A_2 = \varnothing$, and hence that $R_1 \cap R_2 = \varnothing$. As $S_i$ is $1$-extendable, the semigroup $S_i^\bi$ is word-hyperbolic with respect to the regular combing $\overline{R_i}  = R_i \cup \{ \varepsilon \}$, where now $\varepsilon$ is the unique word mapping to the identity element $\bi_i$ of $S_i^\bi$. Let $A = A_1 \cup A_2$.

For $i=1, 2$, define the monadic rewriting system  $\sR_i$ by 
\begin{equation}
\sR_i = \{ (w, \#_2) \mid w \in \mathcal{T}(\overline{R_i}) \}.
\end{equation}
Then by assumption $\sR_i$ is a context-free monadic rewriting system. Note that for every $x, x' \in R_i$ with $x =_{S_i} x'$ we have that $x' \#_1 \#_2 x^\trev \in \mathcal{T}(\overline{R_i})$, as $\pi_i(x') \cdot \bi_i = \pi_i(x)$, and thus also $(x' \#_1 \#_2 x^\trev, \#_2) \in \sR_i$. Let $\sR$ be the rewriting system $\sR_1 \cup \sR_2$. This is also a context-free monadic rewriting system. Recall that $A_\# = A \cup \{ \#_1, \#_2 \}$. 

\begin{lemma}\label{Lem:Anc-of-R_i}
Let $w \in A^\ast_\#$. Then $w \in \anc{\sR}(\#_1 \#_2)$ if and only if it is of the form 
\begin{equation}\label{Eq:general-form-of-R-anc}
w \equiv \#_1 \left( \prod_{i=1}^n x_i \#_1 y_i \right) \#_2 \left(\prod_{i=1}^n z_i \right)^\trev,
\end{equation}
for some $n \geq 0$, and where for every $1 \leq i \leq n$, we have $x_i, y_i, z_i \in \overline{R}_{X(i)}$ with $x_i y_i = z_i$ in $S^\bi_{X(i)}$, where $X$ is some parametrisation.
\end{lemma}
\begin{proof}
For ease of notation, we write $\xra{}$ for $\xra{\sR}$, and analogously for $\xr{}, \xr{}^k$, etc. 

$(\impliedby)$ Suppose $w$ is of the form \eqref{Eq:general-form-of-R-anc}. We prove the claim by induction on $n$. The case $n=0$ is immediate. Suppose $n > 0$. Then $w$ contains exactly one occurrence of $\#_2$; to the left of this occurrence is an occurrence of the word $x_n \#_1 y_n$, and to the right is an occurrence of the word $z_n^\trev$. As $x_n, y_n, z_n \in \overline{R}_{X(n)}$ and $x_n y_n = z_n$ in $S_{X(n)}^\bi$, we have $(x_n \#_1 y_n \#_2 z_n^\trev, \#_2) \in \sR_{X(n)} \subseteq \sR$. Hence 
\[
w \xr{} \#_1 \left( \prod_{i=1}^{n-1} x_i \#_1 y_i \right) \#_2 \left( \prod_{i=1}^{n-1} z_i \right)^\trev,
\]
and the right-hand side now lies in $\anc{\sR}(\#_1 \#_2)$ by the inductive hypothesis. 

$(\implies)$ Suppose $w \xra{} \#_1 \#_2$, say $w \xr{}^k \#_1 \#_2$ for some $k \geq 0$. The proof is by induction on $k$. The base case $k=0$ is trivial, for then $w \equiv \#_1 \#_2$. Suppose $k>0$. Then there is some $w' \in A^\ast_\#$ such that $w \xr{} w' \xr{}^{k-1} \#_1 \#_2$, and such that the rewriting is via some rule $r \equiv (x\#_1 y \#_2 z^\trev, \#_2) \in \sR$. Then as $r \in \sR$, we have $x, y, z \in \overline{R}_1 \cup \overline{R}_2$ and $x \cdot y = z$ in $S^\bi_j$ for $j=1$ or $j=2$. Now, by the inductive hypothesis,
\[
w' \equiv \#_1 \left( \prod_{i=1}^m x_i \#_1 y_i \right) \#_2 \left( \prod_{i=1}^m z_i\right)^\trev,
\]
with some parametrisation $X'$ such that for every $1 \leq i \leq m$, we have $x_i' y_i' =_{} z'_i$ in $S^\bi_{X'(i)}$. As the right-hand side of $r$ contains only one occurrence of $\#_2$, and as $w'$ contains only one occurrence of $\#_2$, it follows that 
\begin{equation}\label{Eq:w=complicated_particular_lemma1}
w \equiv \#_1 \left( \prod_{i=1}^m x_i \#_1 y_i \right) (x \#_1 y \#_2 z^\trev) \left( \prod_{i=1}^m z_i\right)^\trev,
\end{equation}
and hence, taking $n = m+1$ and defining the parametrisation $X(i) = X'(i)$ for $i \neq n$, and $X(n) = j$, the expression \eqref{Eq:w=complicated_particular_lemma1} is an expression of the form \eqref{Eq:general-form-of-R-anc} for $w$. 
\end{proof}

We will now show that a rational transduction of the language of all words of the form \eqref{Eq:general-form-of-R-anc} equals $\cT_S(R)$. This will yield the result. Let $\tau_0 \subseteq A_\#^\ast \times A_\#^\ast$ be the rational transduction defined by 
\[
\tau_0 = \bigg\{ \big\{ (\#_1, \#_1), (\#_1, \varepsilon)\big\} \cup \big\{ (a, a) \mid a \in A \cup \{ \#_2 \}\big\}\bigg\}^\ast
\]
For any word $w \in A_\#^\ast$, the language $\tau_0(w)$ consists of all words obtainable by erasing some (possibly zero) amount of $\#_1$-symbols in $w$, while fixing all other symbols. Define the language 
\begin{equation}\label{Eq:Useful_expr_for_table}
\cl_0 = \tau_0 \left( \anc{\sR}(\#_1 \#_2) \right) \cap R \#_1 R \#_2 R^\trev.
\end{equation} 

\begin{lemma}\label{Lem:cl0_is_CF}
The language $\cl_0$ is a context-free language. 
\end{lemma}
\begin{proof}
This is an immediate consequence of the expression \eqref{Eq:Useful_expr_for_table}, in combination with the facts that (i) $\sR$ is a context-free monadic rewriting system; (ii) every singleton language is in $\CF$; (iii) the class $\CF$ has the monadic ancestor property; and (iv) the class $\CF$ closed under rational transduction (and hence also, in particular, intersection with regular languages).  
\end{proof}

We now show that $\cl_0 = \cT_S(R)$.

\begin{lemma}\label{Lem:w_anc_is_in_table}
$\cl_0 \subseteq \cT_S(R)$. 
\end{lemma}
\begin{proof}
Suppose $w \in \cl_0$. Then (1) $w$ is an element of $\tau_0(w')$, where $w'$ is of the form \eqref{Eq:general-form-of-R-anc} (by Lemma~\ref{Lem:Anc-of-R_i}); and (2) $w \in R \#_1 R \#_2 R^\trev$. As $\tau_0(w')$ consists of all words obtainable from $w'$ by erasing some number of $\#_1$-symbols, and the words in $R \#_1 R \#_2 R^\trev$ contain exactly one $\#_1$, it follows from the expression \eqref{Eq:general-form-of-R-anc} for $w'$ that 
\begin{equation}\label{Eq:L=>T,expression_for_w}
w \equiv \left( \prod_{i=1}^k x_i y_i \right) \#_1 \left( \prod_{i=k+1}^n x_i y_i \right) \#_2 \left( \prod_{i=1}^n z_i \right)^\trev
\end{equation}
where for every $1 \leq i \leq n$, we have $x_i, y_i, z_i \in \overline{R}_{X(i)}$ and $x_i y_i = z_i$ in $S^\bi_{X(i)}$, with $X$ some parametrisation. 

Now $x_i \sim y_i$ for all $1 \leq i \leq n$. Furthermore, $y_i \not\sim x_{i+1}$ for all $1 \leq i < k$ and $k< i \leq n$, as $\prod_{i=1}^k (x_iy_i)$ resp. $\prod_{i=k+1}^n (x_iy_i)$ are alternating words. It follows that we must have $x_iy_i \in R_{X(i)}$ for every $1 \leq i \leq n$ and that $z_i \not\sim z_{i+1}$ for every $1 \leq i < n$ except possibly $i=k$. We thus have two cases: (1) $z_k \not\sim z_{k+1}$, or else (2) $z_k \sim z_{k+1}$. In either case, let $\overline{z}_i \equiv x_i y_i$ for $1 \leq i \leq n$. Then $\overline{z}_i \in R_{X(i)}$, and $w$ is of the form 
\[
w \equiv \left( \prod_{i=1}^k \overline{z}_i \right) \#_1 \left( \prod_{i=k+1}^n \overline{z}_i \right) \#_2 \left( \prod_{i=1}^n z_i \right)^\trev.
\]
Suppose we are in case (1). As $\overline{z}_i \equiv x_iy_i$, and $\overline{z}_i = z_i$ in $S^\bi_{X(i)}$, we thus have that $w$ is the element of the multiplication table $\cT_S(R)$ corresponding to the product 
\[
(\overline{z}_1 \overline{z}_2 \cdots \overline{z}_k) \cdot (\overline{z}_{k+1} \overline{z}_{k+2} \cdots \overline{z}_n) =_S z_1 z_2 \cdots z_n,
\]
which clearly holds in $S$.

In case (2), as $z_k =_{S^\bi_{X(k)}} x_k y_k \equiv \overline{z}_k \sim \overline{z}_{k+1} =_{S^\bi_{X(k+1)}} z_{k+1}$, it follows that $z_k \sim z_{k+1}$, and hence, as $R$ consists of alternating words, that $z_k z_{k+1} \in R_{X(k)}$. Let $z \equiv z_k z_{k+1}$, and let $\overline{z} \equiv \overline{z}_k \overline{z}_{k+1}$. Then $\overline{z} =_{S^\bi_{X(k)}} z$. Thus $w$ is the element of $\cT_S(R)$ corresponding to the product 
\[
(\overline{z}_1 \overline{z}_2 \cdots \overline{z}_k) \cdot (\overline{z}_{k+1} \overline{z}_{k+2} \cdots \overline{z}_n) =_S z_1 z_2 \cdots z_{k-1} z z_{k+2} \cdots z_n,
\]
which also clearly holds in $S$. Thus, in either case, we have that $w \in \cT_S(R)$.
\end{proof}

We hence have $\cl_0 \subseteq \cT_S(R)$. We now prove the converse of Lemma~\ref{Lem:w_anc_is_in_table}.

\begin{lemma}\label{Lem:w_in_TS_w_inL0}
$\cT_S(R) \subseteq \cl_0$. 
\end{lemma}
\begin{proof}
Suppose that $w \equiv w_1 \#_1 w_2 \#_2 x^\trev \in \cT_S(R)$, i.e. that $w_1, w_2, x \in R$ are such that $w_1 \cdot w_2 =_S x$. As $w_1, w_2, x \in R = \Alt^+(R_1, R_2)$, we have that 
\[
x \equiv x_1 x_2 \cdots x_n
\]
where $x_i \in R_{X(i)}$ for some standard parametrisation $X$. By Lemma~\ref{Lem:Mult_table_SGP_lemma}, we either fall in case (1) or (2) of the same lemma. 

In case (1), we have, using the notation of that lemma, that 
\begin{align*}
w &\equiv w_1 \#_1 w_2 \#_2 x^\trev \\
&\equiv \overline{x}_1 \overline{x}_2 \cdots \overline{x}_k \#_1 \overline{x}_{k+1} \overline{x}_{k+2} \cdots \overline{x}_{n} \#_2 x_n^\trev x_{n-1}^\trev \cdots x_1^\trev \\
&\in \tau_0 \left[ \left( \prod_{i=1}^k \#_1 \overline{x}_i \right)\left( \prod_{i=k+1}^n \#_1 \overline{x}_i \right) \#_1  \#_2 \left( \prod_{i=i}^n x_i \right)^\trev \right] \\
&= \tau_0 \left[ \left( \prod_{i=1}^n \#_1 \overline{x}_i \right) \#_1 \#_2 \left( \prod_{i=1}^n x_i \right)^\trev \right].
\end{align*}
Let $W \equiv \left( \prod_{i=1}^n \#_1 \overline{x}_i \right) \#_1 \#_2 \left( \prod_{i=i}^n x_i \right)^\trev$. As $w_1, w_2, x \in R$, and hence $w \in R \#_1 R \#_2 R^\trev$, it suffices by the expression \eqref{Eq:Useful_expr_for_table} to show that $W \in \anc{\sR}(\#_1 \#_2)$. As $\overline{x}_i = x_i$ in $S_{X(i)}$, we have $(\overline{x}_i \#_1 \#_2 x_i^\trev, \#_2) \in \sR$ for every $1 \leq i \leq n$. Hence 
\[
W \xr{} \left( \prod_{i=1}^{n-1} \#_1 \overline{x}_i \right) \#_1 \#_2 \left( \prod_{i=1}^{n-1} x_i \right)^\trev \xr{} \cdots \xr{} (\#_1 \overline{x}_1) \#_1 \#_2 (x_1^\trev) \xr{} \#_1 \#_2,
\]
which is what was to be shown. 

In case (2), the proof is almost the same as in case (1), but the reductions are no longer exclusively by rules of the form $(\overline{x}_i \#_1 \#_2 x_i^\trev, \#_2)$. In the same way as in case (1), however, we find that $w \in \tau_0(W)$, where 
\[
W \equiv \left( \prod_{i=1}^{k-1} \#_1 \overline{x}_i \right) \#_1 (\overline{x}_k' \#_1 \overline{x}_k'') \left( \prod_{k+1}^n \#_1 \overline{x}_i\right) \#_2 \left( \prod_{i=1}^n x_i \right)^\trev.
\] 
By applying the rules $(\overline{x}_i \#_1 \#_2 x_i^\trev \to \#_2)$ to $W$, for $i = n, n-1, \dots, k+1$ (all such rules are in $\sR$ as $\overline{x}_i \cdot \varepsilon = x_i$ in $S_{X(i)}^\bi$), we find that
\begin{align*}
W \xra{} &\left( \prod_{i=1}^{k-1} \#_1 \overline{x}_i \right) \#_1 (\overline{x}_k' \#_1 \overline{x}_k'') \#_2 \left( \prod_{i=1}^k x_i \right)^\trev \\
\equiv &\left( \prod_{i=1}^{k-1} \#_1 \overline{x}_i \right) \#_1 (\overline{x}_k' \#_1 \overline{x}_k'') \#_2 x_k^\trev \left( \prod_{i=1}^{k-1} x_i \right)^\trev \\
\xr{} &\left( \prod_{i=1}^{k-1} \#_1 \overline{x}_i \right) \#_1 \#_2 \left( \prod_{i=1}^{k-1} x_i \right)^\trev,
\end{align*}
where in the final step we used the rule $(\overline{x}_k' \#_1 \overline{x}_k'' \#_2 x_k^\trev, \#_2)$, which is in $\sR$ as $\overline{x}_k' \cdot \overline{x}_k'' = x_k^\trev$ in $S_{X(k)}$ (and hence also in $S_{X(k)}^\bi$).  The proof now proceeds just as in case (1), and we find that $W \in \anc{\sR}(\#_1 \#_2)$, and as $w \in \tau_0(W)$ and $w \in R \#_1 R \#_2 R^\trev$, we have $w \in \cl_0$. 
\end{proof}

Thus, we have $\cT_S(R) = \cl_0$. As $R$ is a regular combing of $S = S_1 \ast S_2$ by Lemma~\ref{Lem:SGPFP-alt-is-combing}, and as $\cl_0$ is context-free by Lemma~\ref{Lem:cl0_is_CF}, we conclude that $(R, \cT_S(R))$ is a word-hyperbolic structure for $S = S_1 \ast S_2$. This completes the proof of Theorem~\ref{Thm:Sgp_extendable_result}.
\end{proof}

By Lemma~\ref{Lem:Types-of-extendable}, we find the following explicit corollaries of Theorem~\ref{Thm:Sgp_extendable_result}:

\begin{corollary}\label{Cor:Monoids_sgp}
The semigroup free product of two word-hyperbolic monoids is word-hyperbolic. 
\end{corollary}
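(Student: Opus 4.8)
The plan is to deduce this immediately from Theorem~\ref{Thm:Sgp_extendable_result} combined with Lemma~\ref{Lem:Types-of-extendable}, with essentially no additional work. Let $M_1, M_2$ be word-hyperbolic monoids. First I would observe that each $M_i$ is in particular a semigroup, and that being word-hyperbolic ``as a monoid'' coincides with being word-hyperbolic ``as a semigroup'' (by \cite[Theorem~3.5]{Duncan2004}, as recalled in \S\ref{Subsec:Intro-hyp}). Hence each $M_i$ is a word-hyperbolic semigroup in the sense required by the hypotheses of Theorem~\ref{Thm:Sgp_extendable_result}.

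Next I would invoke Lemma~\ref{Lem:Types-of-extendable}(1), which states that any word-hyperbolic monoid is $1$-extendable. Applying this to $M_1$ and $M_2$ shows that both factors are $1$-extendable word-hyperbolic semigroups. Feeding this into Theorem~\ref{Thm:Sgp_extendable_result} then yields that the semigroup free product $M_1 \ast M_2$ is word-hyperbolic, which is exactly the claim.

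The only points warranting any care are bookkeeping ones rather than genuine obstacles. One should keep in mind that the semigroup free product $M_1 \ast M_2$ appearing here is a \emph{semigroup}, never a monoid (cf.\ the remark following \eqref{Eq:sgp_fp_multiplication}), so there is no conflict with the monoid free product treated later; and one must cite the monoid/semigroup equivalence for word-hyperbolicity to justify that the word-hyperbolic monoid hypothesis supplies a word-hyperbolic semigroup. All of the substantive content — the construction of the combing $R = \Alt^+(R_1,R_2)$, the context-free monadic rewriting system $\sR$, and the transduction identity $\cl_0 = \cT_S(R)$ — has already been carried out in the proof of Theorem~\ref{Thm:Sgp_extendable_result}, so the corollary follows in a single line.
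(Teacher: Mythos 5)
Your proposal is correct and is precisely the paper's own argument: the corollary is stated as an immediate consequence of Theorem~\ref{Thm:Sgp_extendable_result} via Lemma~\ref{Lem:Types-of-extendable}(1), which gives $1$-extendability of word-hyperbolic monoids. The bookkeeping remarks you add (the monoid/semigroup equivalence of word-hyperbolicity and the fact that the semigroup free product of monoids is not a monoid) are consistent with the paper and do not change the route.
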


\begin{corollary}\label{Cor:Regular-sgp-fp}
The semigroup free product of two (von Neumann) regular word-hyperbolic semigroups is word-hyperbolic.
\end{corollary}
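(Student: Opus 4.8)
The plan is to deduce this directly from Theorem~\ref{Thm:Sgp_extendable_result}, using the sufficient conditions for $1$-extendability already collected in Lemma~\ref{Lem:Types-of-extendable}. The only thing that needs checking is that a von Neumann regular word-hyperbolic semigroup satisfies the hypotheses of the main theorem, i.e. that it is $1$-extendable; once that is in hand, the conclusion is immediate and no further construction is required.

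Concretely, I would first invoke Lemma~\ref{Lem:Types-of-extendable}(2), which asserts precisely that any (von Neumann) regular word-hyperbolic semigroup is $1$-extendable. The underlying reason—traced back through Lemma~\ref{Lem:right-stabiliser-extends}—is that in a regular semigroup every element $s$ admits a relation $s = sts$, so that $s(ts) = s$ and hence $ts$ is a right stabiliser of $s$; but this is already packaged into the cited lemma, so I need not reprove it. Given regular word-hyperbolic semigroups $S_1$ and $S_2$, both are therefore $1$-extendable word-hyperbolic semigroups, and applying Theorem~\ref{Thm:Sgp_extendable_result} yields that the free product $S_1 \ast S_2$ is word-hyperbolic.

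There is no genuine obstacle here: the corollary is a purely formal consequence of the main theorem together with the already-established extendability of regular semigroups. All of the substantive work—choosing the combing $\Alt^+(R_1,R_2)$, realising $\cT_S(R)$ as a rationally transduced, ancestry-preserved language, and verifying the two inclusions $\cl_0 \subseteq \cT_S(R)$ and $\cT_S(R) \subseteq \cl_0$—was carried out in the proof of Theorem~\ref{Thm:Sgp_extendable_result}. The proof thus reduces to a single citation chain, and the expected length is one line.
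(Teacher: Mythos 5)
Your proposal is correct and matches the paper's own proof exactly: the paper derives this corollary by combining Lemma~\ref{Lem:Types-of-extendable}(2) (which rests on the right-stabiliser argument of Lemma~\ref{Lem:right-stabiliser-extends}, via $s = sts \Rightarrow s(ts) = s$) with Theorem~\ref{Thm:Sgp_extendable_result}. Nothing further is needed.
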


\begin{corollary}\label{Cor:Uniqueness-sgp-fp}
The semigroup free product of two word-hyperbolic semigroups with uniqueness is word-hyperbolic with uniqueness.
\end{corollary}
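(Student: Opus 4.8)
The plan is to reuse the word-hyperbolic structure built in the proof of Theorem~\ref{Thm:Sgp_extendable_result}, and then to argue separately that the combing it employs inherits uniqueness from the factors. First, a word-hyperbolic semigroup with uniqueness is $1$-extendable by Lemma~\ref{Lem:Types-of-extendable}(3), so both $S_1$ and $S_2$ are $1$-extendable and Theorem~\ref{Thm:Sgp_extendable_result} applies verbatim: the free product $S = S_1 \ast S_2$ is word-hyperbolic, witnessed by the combing $R = \Alt^+(R_1, R_2)$ together with the context-free table $\cT_S(R) = \cl_0$. Thus the only new content is to verify that, when $R_1$ and $R_2$ are combings with uniqueness, this same $R$ is a combing with uniqueness, i.e. that $\pi_S$ restricts to a bijection on $R$.

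Surjectivity of $\pi_S|_R$ is already Lemma~\ref{Lem:SGPFP-alt-is-combing}, so I would only need to establish injectivity. Suppose $u, v \in R$ with $u =_S v$, and take their unique alternating factorisations $u \equiv u_1 u_2 \cdots u_n$ and $v \equiv v_1 v_2 \cdots v_k$, with associated standard parametrisations $X$ and $Y$. Lemma~\ref{Lem:SGPFP-combing-equality} then forces $n = k$, $X = Y$, and $u_i =_{S_{X(i)}} v_i$ for every $1 \leq i \leq n$. Since each pair $u_i, v_i$ lies in the factor combing $R_{X(i)}$, which has uniqueness by hypothesis, the map $\pi_{X(i)}$ is injective on $R_{X(i)}$; hence $u_i =_{S_{X(i)}} v_i$ yields $u_i \equiv v_i$. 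Concatenating gives $u \equiv u_1 \cdots u_n \equiv v_1 \cdots v_n \equiv v$, so $\pi_S|_R$ is injective.

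Putting the two halves together, $R$ is a regular combing with uniqueness whose table $\cT_S(R)$ is context-free, which is exactly the assertion that $S$ is word-hyperbolic with uniqueness. I do not expect a genuine obstacle: the whole argument is the reduction, via Lemma~\ref{Lem:SGPFP-combing-equality}, of equality in the free product to coordinatewise equality in the factors, after which uniqueness propagates one coordinate at a time. The single point meriting care is that the combing produced by Theorem~\ref{Thm:Sgp_extendable_result} is precisely $\Alt^+(R_1, R_2)$, so that the uniqueness of the $R_i$ can be invoked directly instead of re-deriving a combing from scratch.
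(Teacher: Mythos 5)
Your proposal is correct and follows the paper's own route exactly: invoke Lemma~\ref{Lem:Types-of-extendable}(3) to get $1$-extendability, apply Theorem~\ref{Thm:Sgp_extendable_result} with the combing $\Alt^+(R_1,R_2)$, and then observe that distinct words of $\Alt^+(R_1,R_2)$ represent distinct elements of $S$ via Lemma~\ref{Lem:SGPFP-combing-equality} together with uniqueness in the factors. The paper compresses the injectivity step into a single sentence; you have merely written it out in full.
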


The final ``with uniqueness'' in the statement of Corollary~\ref{Cor:Uniqueness-sgp-fp} follows from the fact that the elements of $\Alt^+(R_1, R_2)$ represent pairwise distinct elements of $S$. We will now turn towards considering monoid free products. To do this, we first need to introduce a useful purely language-theoretic operation.

\section{Polypartisan Ancestors}\label{Sec:Polypartisan}

\noindent In this section, we will generalise (in a fairly uncomplicated manner) the \textit{bipartisan ancestors} introduced in \cite{NybergBrodda2021f} to \textit{poly}partisan ancestors, and prove that this construction preserves certain language-theoretic properties of the languages it is applied to. We will use this construction to obtain the multiplication table for a monoid free product from the table for a semigroup free product.

Let $A$ be a finite alphabet, and let $k \geq 1$. Let $\#_1, \#_2, \dots, \#_{k}$ be $k$ new symbols, and let $A_\# = A \cup \bigcup_{i=1}^{k} \{ \#_i \}$. We will let $\sh_k(A)$ denote the shuffle product
\[
\sh_k(A) =  A^\ast \sh\{ \#_1 \#_2 \cdots \#_k \}  = \{ u_0 \#_1 u_1 \#_2 \cdots \#_k u_k \mid u_i \in A^\ast\}.
\]
We call $\sh_k(A)$ the \textit{full $k$-shuffled language} (associated to $A$). Any subset of $\sh_k(A)$ is called a $k$-shuffled language (with respect to $A$). Thus, the ``word problem'' in the sense of Duncan \& Gilman \cite{Duncan2004} for a monoid generated by $A$ is a $1$-shuffled language, i.e. a subset of $\sh_1(A)$, and its multiplication table is a $2$-shuffled language, i.e. a subset of $\sh_2(A)$. Furthermore, the solution set for a set of equations in $k$ unknowns over a group is a $k$-shuffled language \cite{Ciobanu2016}. 

For elements $w \in \sh_k(A)$, we will introduce the notation 
\[
w \equiv [u_0, u_1, \dots, u_k] \quad \iff \quad w \equiv u_0 \#_1 u_1 \#_2 \cdots \#_k u_k.
\]
To abbreviate even further, we will write $[u_{(k)}]$ for $[u_0, u_1, \dots, u_k]$. Thus, the word problem for a monoid $M$ consists of words $[u_{(1)}]$ with $u_0 =_M u_1^\trev$, and a multiplication table for $M$ consists of words of the form $[v_{(2)}]$ with $v_0 \cdot v_1 =_M v_2^\trev$.

Let $k \geq 1$, and let $\sR_0, \sR_1, \dots, \sR_k \subseteq A^\ast \times A^\ast$ be a collection of $k+1$ rewriting systems. Let $L \subseteq \sh_k(A)$ be any language. We will define a new language $\sR_{(k)}(L) \subseteq \sh_k(A)$ as:
\begin{equation}\label{Eq:Def_of_k_ancestors}
\sR_{(k)}(L) = \{ [w_{(k)}] \colon \exists [u_{(k)}] \in L \textnormal{ such that } w_i \xra{\sR_i} u_i \textnormal{ for all $0 \leq i \leq k$} \}.
\end{equation}
We call $\sR_{(k)}(L)$ the \textit{$(k+1)$-partisan ancestor} of $L$ (with respect to $\sR_0, \sR_1, \dots, \sR_k$). A useful intuition for polypartisan ancestors is as follows: one may imagine the $k$-shuffled language 

Polypartisan ancestors generalise in an easy way the bipartisan ancestors introduced by the author in \cite{NybergBrodda2021f}. It is clear that $\sR_{(k)}(L)$ is a $k$-shuffled language. The use for polypartisan ancestors in this present article will be in preserving language-theoretic properties, in the following sense: 

\begin{proposition}\label{Prop:Polypartisan_preserves} 
Let $\cc$ be a super-$\AFL$. Let $L \in \cc$, and let $\sR_i \subseteq A^\ast \times A^\ast$ be $\cc$-monadic rewriting systems for $0 \leq i \leq k$. Then $\sR_{(k)}(L) \in \cc$.
\end{proposition}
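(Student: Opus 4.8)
The plan is to reduce this block-wise, multi-system ancestry operation to a \emph{single} invocation of the monadic ancestor property of $\cc$, by pushing the $k+1$ blocks onto pairwise disjoint marked alphabets so that one combined monadic rewriting system cannot transport rewriting from one block into another. First I would introduce $k+1$ disjoint marked copies $A^{(0)}, \dots, A^{(k)}$ of $A$, writing $a^{(i)}$ for the copy of $a \in A$ in the $i$-th alphabet, and set $B = \bigcup_{i=0}^k A^{(i)}$ and $B_\# = B \cup \{\#_1, \dots, \#_k\}$. I would then colour $L$ by the rational transduction $\phi$ that passes each $\#_j$ through unchanged and, while reading the $i$-th block (i.e. after exactly $i$ of the separators have been read), rewrites each $a \in A$ to $a^{(i)}$. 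Since $k$ is fixed, $\phi$ is realised by a finite-state transducer with $k+1$ states, and as $\cc$ is a full $\AFL$ (hence a full trio, hence closed under rational transductions), we get $L' := \phi(L) \in \cc$ with $L' \subseteq A^{(0)\ast}\#_1 A^{(1)\ast}\#_2 \cdots \#_k A^{(k)\ast}$.

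Next I would assemble the single rewriting system $\sS$ over $B_\#$ consisting, for each $0 \leq i \leq k$ and each rule $(\ell, r) \in \sR_i$, of the marked rule $(\ell^{(i)}, r^{(i)})$ got by applying the $i$-th colouring to both sides. Since each $r$ lies in $A \cup \{\varepsilon\}$ we have $r^{(i)} \in A^{(i)} \cup \{\varepsilon\} \subseteq B_\# \cup \{\varepsilon\}$ and $|\ell^{(i)}| = |\ell| \geq |r| = |r^{(i)}|$, so $\sS$ is monadic; moreover, for each marked letter $a^{(i)}$ the set $\{\ell^{(i)} : (\ell, a) \in \sR_i\}$ is the homomorphic image of $\{\ell : (\ell,a) \in \sR_i\} \in \cc$, while the $\varepsilon$-indexed set is a finite union of such images, so $\sS$ is a $\cc$-monadic rewriting system. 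As $\cc$ is a super-$\AFL$ it has the monadic ancestor property, whence $\anc{\sS}(L') \in \cc$. I would then intersect with the regular language $A^{(0)\ast}\#_1 \cdots \#_k A^{(k)\ast}$ and apply the decolouring homomorphism $\psi \colon a^{(i)} \mapsto a$, $\#_j \mapsto \#_j$; using closure under intersection with regular languages and under homomorphism, the result lies in $\cc$, and the final task is to check that it equals $\sR_{(k)}(L)$.

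The crux, and the step I expect to demand the most care, is precisely this last verification: that $\psi\big(\anc{\sS}(L') \cap (A^{(0)\ast}\#_1 \cdots \#_k A^{(k)\ast})\big) = \sR_{(k)}(L)$. The key observation is that every rule of $\sS$ has both sides over a \emph{single} $A^{(i)}$ and free of separator symbols, so no rule application ever creates, destroys, or straddles a $\#_j$. Consequently, on a word with the correct block structure each application occurs entirely within one block and, by colour-matching, within the block of the same index; hence any $\sS$-reduction of a correctly structured ancestor down to a word of $L'$ decomposes into independent reductions $w_i \xra{\sR_i} u_i$ inside the blocks, giving the inclusion into $\sR_{(k)}(L)$ (the reverse inclusion being immediate from the definition). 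I would emphasise that the regular intersection is genuinely necessary here: the length-decreasing rules $(\ell^{(i)}, \varepsilon)$ allow one to insert a marked factor $\ell^{(i)}$ into the \emph{wrong} block and still reduce back into $L'$, so $\anc{\sS}(L')$ contains spurious, ill-structured ancestors that must be discarded before decolouring, and it is exactly the intersection with the block-structure regular language that removes them.
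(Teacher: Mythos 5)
Your proof is correct and follows essentially the same route as the paper's: both recolour the $k+1$ blocks onto pairwise disjoint alphabets by a rational transduction, apply the monadic ancestor property a single time to the union of the recoloured $\cc$-monadic systems, and then translate back. The only difference is packaging --- the paper returns via the inverse transduction $\mu_k^{-1}$, whose domain automatically discards the ill-structured ancestors that you remove by intersecting with $A^{(0)\ast}\#_1\cdots\#_k A^{(k)\ast}$, and your explicit observation that $\varepsilon$-rules create spurious ancestors straddling the wrong block is a genuine point that the paper's sketch of Lemma~\ref{Lem:Weak_polypartisan} glosses over.
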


The technique we will use to prove Proposition~\ref{Prop:Polypartisan_preserves} is a generalisation of a similar technique used to prove \cite[Proposition~2.5]{NybergBrodda2021f}, but follows its ideas rather closely. We will first prove a weaker form of Proposition~\ref{Prop:Polypartisan_preserves} (namely Lemma~\ref{Lem:Weak_polypartisan}). We will then use a rational transduction to move from the general case to this weaker form.

Let $A_0, A_1, \dots, A_k$ be $k+1$ alphabets, with $A \cap A_i = \varnothing$ for all $i$, and with $A_i \cap A_j = \varnothing$ for $i \neq j$. We let 

\[
\widehat{\sh}_k(A_0, \dots, A_k) = \sh_k(\bigcup_{i=0}^k A_i) \cap A_0^\ast \#_1 A_1^\ast \#_2 \cdots \#_k A_k^\ast.
\]
For this ``separated shuffle'', preservation properties are simple to prove.

\begin{lemma}\label{Lem:Weak_polypartisan}
Let $\sR_i \subseteq A_i^\ast \times A_i^\ast$ be $\cc$-monadic rewriting systems for $0 \leq i \leq k$. Let $L \in \cc$ be such that $L \subseteq \widehat{\sh}_k(A_0, \dots, A_k)$. Then $\sR_{(k)}(L) \in \cc$.
\end{lemma}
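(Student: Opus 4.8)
The plan is to fold the $k+1$ separate rewriting systems into a single system over the combined alphabet and then extract $\sR_{(k)}(L)$ from its ancestor set by a regular intersection. Write $B = \bigcup_{i=0}^k A_i$ and work over $B \cup \{\#_1, \dots, \#_k\}$, setting $\sR = \bigcup_{i=0}^k \sR_i$, regarded now as a single rewriting system over this enlarged alphabet. First I would check that $\sR$ is again a $\cc$-monadic rewriting system: for a target letter $a \in A_i$ the language $\{u \mid (u,a)\in\sR\}$ coincides with $\{u\mid(u,a)\in\sR_i\}\in\cc$, since the $A_j$ are pairwise disjoint and no rule of $\sR_j$ ($j \neq i$) has target $a$; while for the target $\varepsilon$ it is the finite union $\bigcup_{i}\{u\mid(u,\varepsilon)\in\sR_i\}$, which lies in $\cc$ because $\cc$ is a full $\AFL$ and hence closed under finite union.

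The heart of the argument is the identity
\[
\sR_{(k)}(L) = \anc{\sR}(L) \cap \widehat{\sh}_k(A_0, \dots, A_k).
\]
The structural fact driving it is that a single $\sR$-rewriting step applied to a word of $\widehat{\sh}_k(A_0,\dots,A_k)$ never leaves this set and modifies exactly one block: if a rule $(\ell,r)\in\sR_i$ applies, then $\ell\in A_i^+$ contains no $\#$-symbol and no letter of $A_j$ for $j\neq i$, so its occurrence is forced to lie entirely inside the $i$-th block $w_i\in A_i^\ast$; replacing $\ell$ by $r\in A_i\cup\{\varepsilon\}$ keeps that block over $A_i$ and fixes every other block and every $\#_j$. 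Iterating this, for $w,u\in\widehat{\sh}_k(A_0,\dots,A_k)$ one gets $w\xra{\sR}u$ if and only if $w_i\xra{\sR_i}u_i$ for all $i$, the converse implication following by concatenating the independent blockwise reductions (which may be performed in any order, the alphabets being disjoint). This yields both inclusions: for $\subseteq$, a word of $\sR_{(k)}(L)$ lies in $\widehat{\sh}_k$ by definition and reaches its witness in $L$ by blockwise reduction, hence lies in $\anc{\sR}(L)$; for $\supseteq$, a word $w\in\anc{\sR}(L)\cap\widehat{\sh}_k$ reaches some $u\in L\subseteq\widehat{\sh}_k$ and, being itself in $\widehat{\sh}_k$, the forward-invariance forces the whole reduction to stay in $\widehat{\sh}_k$ and proceed blockwise, giving $w_i\xra{\sR_i}u_i$.

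Granting the identity, the conclusion is immediate. We have $\anc{\sR}(L)\in\cc$ because $\cc$ is a super-$\AFL$ and therefore has the monadic ancestor property (Definition~\ref{Def:super-AFL}), applied to the $\cc$-monadic system $\sR$ and the input $L\in\cc$; the separated shuffle $\widehat{\sh}_k(A_0,\dots,A_k) = A_0^\ast\#_1 A_1^\ast\#_2\cdots\#_k A_k^\ast$ is plainly regular; and $\cc$, being a full $\AFL$, is closed under intersection with regular languages. Hence $\sR_{(k)}(L)\in\cc$.

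I expect the main obstacle to be the structural claim that $\widehat{\sh}_k(A_0,\dots,A_k)$ is forward-invariant under $\sR$ and that the reduction stays blockwise; one must argue carefully that a rule cannot contaminate a neighbouring block. The subtlety worth flagging is that $\anc{\sR}(L)$ genuinely contains words outside $\widehat{\sh}_k$ — for example an $\varepsilon$-rule of $\sR_i$ admits ancestors in which a factor over $A_i$ has been spliced into a block of a different alphabet — so the intersection with $\widehat{\sh}_k(A_0,\dots,A_k)$ is doing essential work and cannot be dropped.
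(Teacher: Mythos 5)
Your argument is correct and follows the same basic strategy as the paper's own (sketched) proof: fold the systems into the single $\cc$-monadic system $\sR = \bigcup_{i=0}^k \sR_i$ and invoke the monadic ancestor property. Where you differ is that the paper asserts the unqualified identity $\sR_{(k)}(L) = \anc{\sR}(L)$ and stops, whereas you prove $\sR_{(k)}(L) = \anc{\sR}(L) \cap \widehat{\sh}_k(A_0,\dots,A_k)$ and flag that the intersection cannot be dropped. Your caveat is right, and it is a genuine improvement in precision: the unqualified identity fails as soon as some $\sR_i$ contains a rule with right-hand side $\varepsilon$, since such a rule can be un-applied inside a block over $A_j$ with $j \neq i$, producing ancestors of $L$ outside $\widehat{\sh}_k(A_0,\dots,A_k)$ that need not lie in $\sR_{(k)}(L)$. (Concretely, with $k=1$, $A_0 = \{a\}$, $\sR_0 = \{(aa,\varepsilon)\}$, $\sR_1 = \varnothing$ and $L = \{\#_1\}$, the word $\#_1 aa$ lies in $\anc{\sR}(L)$ but not in $\sR_{(1)}(L)$.) This is not an idle corner case, since the systems $\sT_i$ to which the construction is ultimately applied in Theorem~\ref{Thm:star-hyp-fp} are special, i.e.\ consist entirely of $\varepsilon$-rules. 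The repair costs nothing -- $\widehat{\sh}_k(A_0,\dots,A_k)$ is regular and $\cc$, being a full $\AFL$, is closed under intersection with regular languages -- and your two auxiliary observations (forward-invariance of $\widehat{\sh}_k(A_0,\dots,A_k)$ under $\sR$ for the inclusion $\supseteq$, and the fact that letters outside $A_i$ cannot be erased by $\sR_i$ for the inclusion $\subseteq$) are exactly what is needed to justify the corrected identity. The only nitpick is that the containment $\sR_{(k)}(L) \subseteq \widehat{\sh}_k(A_0,\dots,A_k)$ is not quite ``by definition'' but follows from that second observation; you in effect supply the justification anyway.
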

\begin{proof}
This closely follows the proof of \cite[Lemma~2.4]{NybergBrodda2021f}, which is the case for $k=1$, so we only sketch the main idea. As $\sR_{(k)}(L) \in \sh_k(A)$, it suffices to show that $\sR_{(k)}(L) \in \cc$. It is not difficult to see that as the alphabets $A_i$ are disjoint, $\sR_i \subseteq A_i^\ast \times A_i^\ast$, and every word in $L$ is of the form $u_0 \#_1 u_1 \#_2 \cdots \#_k u_k$, where $u_i \in A_i^\ast$, we have that $\sR_{(k)}(L) =\anc{\cup_{i=0}^k \sR_i}(L)$. As each $\sR_i$ is $\cc$-monadic -- and $\cc$ is closed under union being a super-$\AFL$ -- so too is $\sR := \cup_{i=0}^k \sR_i$. As $\cc$ is a super-$\AFL$, it has the monadic ancestor property, whence we find that $\anc{\sR}(L)$ is in $\cc$.
\end{proof}

We will from this point on assume that $|A_i| = |A|$ for all $0 \leq i \leq k$, and fix bijections $\varphi_i \colon A \to A_i$. We extend these to isomorphisms $\varphi_i \colon A^\ast \to A_i^\ast$ of free monoids. We will let $A_{I} = \bigcup_{i=0}^k A_i$, and let $A_{I,\#} = A_I \cup \bigcup_{i=0}^k\{ \#_i \}$. Further, we will write $\sR^{\varphi}_{i} = \varphi_i( \sR_i)$, where the action of $\varphi_i$ is entry-wise on the rules of $\sR_i$. If $\sR_i$ is a $\cc$-monadic rewriting system, then so too clearly is $\sR_i^\varphi$. 

We define a rational transduction $\mu_k \subseteq A_\#^\ast \times A_{I,\#}^\ast$ as
\begin{equation}\label{Eq:mu-trans-polypartisan}
\mu_k = \left( \bigcup_{a \in A} (a, \varphi_k(a) )\right)^\ast  \prod_{i=1}^k \left( \bigcup_{a \in A} (a, \varphi_i(a) )\right)^\ast (\#_i,\#_i).
\end{equation}
Then $\mu_k$ is indeed rational, as it is of the form $X_0^\ast x_1 X_1^\ast \cdots x_k X_k^\ast$, where the subset $X_i \subseteq A_\#^\ast \times A_{I,\#}^\ast$ is finite for $0 \leq i \leq k$, and $x_j \in A_\#^\ast \times A_{I,\#}^\ast$ is a single element for $1 \leq i \leq k$. Hence $\mu_k$ is a rational subset of $A_\#^\ast \times A_{I,\#}^\ast$. If $\mu_k$ is applied to (the singleton language containing) exactly one word $w \in \sh_k(A)$, it clearly produces (the singleton language containing) exactly one word from $\widehat{\sh}_k(A_0, \dots, A_k)$, and $\mu_k$ is injective on $\sh_k(A)$. That is, if 
\begin{equation*}
w \equiv u_0 \#_1 u_1 \#_2 \cdots \#_k u_k \in \sh_k(A),
\end{equation*}
where $u_i \in A^\ast$, then 
\begin{equation}\label{Eq:MuK=varphi}
\mu_k(w) = \{ \varphi_0(u_0) \#_1 \varphi_1(u_1) \#_2 \cdots \#_k \varphi_k(u_k) \},
\end{equation}
and if $w_1, w_2 \in \sh_k(A)$, then $\mu_k(w_1) = \mu_k(w_2)$ if and only if $w_1 \equiv w_2$, as each $\varphi_i$ is an isomorphism of free monoids. Slightly abusively, we will write the equality \eqref{Eq:MuK=varphi} as $\mu_k([u_{(k)}]) = [\varphi_k(u_{(k)})]$. Let $\mu_k^{-1}$ denote the inverse of the rational transduction $\mu_k$. Then the above amounts to saying that 
\begin{equation}\label{Eq:Mu_is_injective}
(\mu_k^{-1} \circ \mu_k)(L) = L
\end{equation}
for every $L \subseteq \sh_k(A)$. 

\begin{lemma}\label{Lem:cR_anc_is_transduction}
Let $L \subseteq \sh_k(A)$. Then $\sR_{(k)}(L) = \mu_k^{-1}\sR^\varphi_{(k)}(\mu_k(L))$.
\end{lemma}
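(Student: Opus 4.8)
The plan is to reduce the claimed identity to the single intermediate equality
\[
\sR^\varphi_{(k)}(\mu_k(L)) = \mu_k\big(\sR_{(k)}(L)\big),
\]
after which the result follows immediately by applying $\mu_k^{-1}$ to both sides and invoking the injectivity relation \eqref{Eq:Mu_is_injective}: since $\sR_{(k)}(L) \subseteq \sh_k(A)$, we have $\mu_k^{-1}\mu_k(\sR_{(k)}(L)) = \sR_{(k)}(L)$. So the heart of the matter is the displayed equality, which I would establish by a two-way inclusion.

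First I would record the two facts that let $\varphi_i$ transport rewriting faithfully. Since $\varphi_i \colon A^\ast \to A_i^\ast$ is an isomorphism of free monoids and $\sR_i^\varphi = \varphi_i(\sR_i)$ is the rule-by-rule image of $\sR_i$, one has for all $v, u \in A^\ast$ that $v \xra{\sR_i} u$ if and only if $\varphi_i(v) \xra{\sR^\varphi_i} \varphi_i(u)$. Second, because each $\sR_i^\varphi$ is monadic with all left- and right-hand sides over $A_i$, any letter outside $A_i$ is inert under $\xra{\sR^\varphi_i}$: it can never be matched by a left-hand side, hence never removed. Consequently, if $w' \xra{\sR^\varphi_i} \varphi_i(u)$ with $\varphi_i(u) \in A_i^\ast$, then $w'$ contains no letter outside $A_i$, so $w' \in A_i^\ast$ and $w' = \varphi_i(\varphi_i^{-1}(w'))$.

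With these in hand, the inclusion $\mu_k(\sR_{(k)}(L)) \subseteq \sR^\varphi_{(k)}(\mu_k(L))$ is the easy direction: given $[w_{(k)}] \in \sR_{(k)}(L)$ witnessed by some $[u_{(k)}] \in L$ with $w_i \xra{\sR_i} u_i$, apply $\varphi_i$ componentwise to obtain $\varphi_i(w_i) \xra{\sR^\varphi_i} \varphi_i(u_i)$; since $\mu_k([u_{(k)}]) = [\varphi_k(u_{(k)})] \in \mu_k(L)$ by \eqref{Eq:MuK=varphi}, the word $\mu_k([w_{(k)}]) = [\varphi_k(w_{(k)})]$ lies in $\sR^\varphi_{(k)}(\mu_k(L))$ by the defining formula \eqref{Eq:Def_of_k_ancestors}. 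For the reverse inclusion, take $[w'_{(k)}] \in \sR^\varphi_{(k)}(\mu_k(L))$ witnessed by some $[u'_{(k)}] \in \mu_k(L)$; writing $u'_i = \varphi_i(u_i)$ for the unique $[u_{(k)}] \in L$ with $\mu_k([u_{(k)}]) = [u'_{(k)}]$, the inertness observation forces $w'_i \in A_i^\ast$, so $w'_i = \varphi_i(v_i)$ with $v_i = \varphi_i^{-1}(w'_i)$, and the transport equivalence converts $\varphi_i(v_i) \xra{\sR^\varphi_i} \varphi_i(u_i)$ into $v_i \xra{\sR_i} u_i$. Hence $[v_{(k)}] \in \sR_{(k)}(L)$ and $[w'_{(k)}] = \mu_k([v_{(k)}])$, as required.

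The main obstacle, such as it is, lies entirely in the inertness argument: one must justify that in forming $\sR^\varphi_{(k)}(\mu_k(L))$ no component $w'_i$ can ``leak'' letters from a foreign alphabet $A_j$ with $j \neq i$, or from $A$ itself, which is precisely what pins the ancestor down to the image of $\sR_{(k)}(L)$ under $\mu_k$ rather than something larger. Everything else is a routine unwinding of the definitions \eqref{Eq:Def_of_k_ancestors} and \eqref{Eq:MuK=varphi} together with the fact that $\varphi_i$ is a free-monoid isomorphism intertwining $\sR_i$ with $\sR^\varphi_i$.
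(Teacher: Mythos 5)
Your proof is correct, but it establishes more than the lemma asks for and consequently does more work than the paper's proof. The paper argues by a single chain of biconditionals: $[w_{(k)}] \in \sR_{(k)}(L)$ iff some $[u_{(k)}] \in L$ has $w_i \xra{\sR_i} u_i$ for all $i$, iff $\varphi_i(w_i) \xra{\sR^\varphi_i} \varphi_i(u_i)$ for all $i$, iff $\mu_k([w_{(k)}]) \in \sR^\varphi_{(k)}(\mu_k([u_{(k)}]))$, iff $[w_{(k)}] \in \mu_k^{-1}(\sR^\varphi_{(k)}(\mu_k([u_{(k)}])))$. Since every step starts from a word already lying in $\sh_k(A)$ and the conclusion only ever passes through $\mu_k^{-1}$, the question of whether $\sR^\varphi_{(k)}(\mu_k(L))$ contains ``junk'' ancestors outside the image of $\mu_k$ simply never arises: any such words are discarded by $\mu_k^{-1}$ automatically. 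You instead prove the strictly stronger intermediate identity $\sR^\varphi_{(k)}(\mu_k(L)) = \mu_k(\sR_{(k)}(L))$, and it is exactly this stronger claim that forces the inertness argument you single out as the main obstacle (letters outside $A_i$ cannot be matched by a left-hand side of $\sR^\varphi_i$, hence persist under rewriting, hence cannot occur in an ancestor of a word of $A_i^\ast$). That argument is sound, but it is not needed for the lemma as stated. What your route buys is a sharper description of the auxiliary language $\sR^\varphi_{(k)}(\mu_k(L))$ itself; what the paper's route buys is brevity, since the obstacle you work to overcome evaporates once one phrases everything in terms of preimages. The shared core --- that the free-monoid isomorphisms $\varphi_i$ intertwine $\xra{\sR_i}$ with $\xra{\sR^\varphi_i}$, combined with the injectivity relation \eqref{Eq:Mu_is_injective} --- is identical in both arguments.
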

\begin{proof}
By \eqref{Eq:Def_of_k_ancestors}, we have $[w_{(k)}] \in \sR_{(k)}(L)$ if and only if there exists $[u_{(k)}] \in L$ such that $w_i \xra{\sR_i} u_i$ for all $0 \leq i \leq k$, which is true if and only if $\varphi_i(w_i) \xra{\sR^\varphi_i} \varphi_i(u_i)$, i.e. $[\varphi_k(w_{(k)})] \in \sR_{(k)}^\varphi (\varphi_k(u_{(k)}))$. But this is simply saying $\mu_k([w_{(k)}]) \in \sR_{(k)}^\varphi(\mu_k([u_{(k)}]))$, which by \eqref{Eq:Mu_is_injective} is equivalent to
\[
[w_{(k)}] \in \mu_k^{-1}(\sR_{(k)}^\varphi(\mu_k([u_{(k)}]))).
\]
With less cumbersome notation, we have proved that $w \in \sR_{(k)}(L)$ if and only if there is some $u \in L$ such that 
\[
w \in \mu_k^{-1}(\sR_{(k)}^\varphi(\mu_k(u))).
\]
In other words, as $w$ is arbitrary, we have $\sR_{(k)}(L) = \mu_k^{-1}\sR^\varphi_{(k)}(\mu_k(L))$.
\end{proof}

\begin{proof}[Proof of Proposition~\ref{Prop:Polypartisan_preserves}]
As $L \in \cc$, we have $\mu_k(L) \in \cc$, as the super-$\AFL$ $\cc$ is closed under rational transduction. As $\sR_i$ is $\cc$-monadic, so too is $\sR_i^\varphi$ for $0 \leq i \leq k$. As $\mu_k(L) \subseteq \widehat{\sh}_k(A_0, \dots, A_k)$, we conclude by Lemma~\ref{Lem:Weak_polypartisan} that $\sR^\varphi_{(k)}(\mu_k(L))$ is in $\cc$. Finally, as $\mu_k^{-1}$ is a rational transduction, the language $\mu_k^{-1}\sR^\varphi_{(k)}(\mu_k(L))$ is in $\cc$; by Lemma~\ref{Lem:cR_anc_is_transduction}, $\sR_{(k)}(L)$ is hence in $\cc$.
\end{proof}

This completes our discussion of polypartisan ancestors.

\section{Monoid free products}\label{Sec:MFP}

In this section we will consider monoid free products. We begin by proving the main theorem for free products of word-hyperbolic monoids with $1$-uniqueness (Theorem~\ref{Thm:Monoid-fp-theorem-1-unqiueness}). We then present a theorem which applies outside the $1$-uniqueness case, to the cases when the combings $R_i$ of the factor monoids $M_i$ satisfy $R_i^\ast = R_i$ (Theorem~\ref{Thm:star-hyp-fp}). We then argue that these two cases are, in a certain sense, complementary (\S\ref{Subsec:NewDef}). 

\subsection{The case of $1$-uniqueness}\label{Subsec:1-uniqueness}

Suppose that $M_i$ (for $i=1,2$) is a word-hyperbolic monoid with $1$-uniqueness, with respect to the regular combing $R_i$. By definition, the only word in $R_i$ which represents the identity of $M_i$ is $\varepsilon$. Let $R_i' = R_i - \{ \varepsilon\}$. Then it is clear that every alternating word in $\Alt(R_1', R_2')$ is reduced; for if $u_0 u_1 \cdots u_n$ is the alternating factorisation of $u \in \Alt(R_1', R_2')$, and $u$ were not reduced, then $u_i = 1$ in either $M_1$ or $M_2$ for some $0 \leq i \leq n$, and hence $u_i \equiv \varepsilon$, a contradiction to $u_i \in R_1' \cup R_2'$. Hence, by Lemma~\ref{Lem:SGPFP-combing-equality}, monoid free products of monoids with $1$-uniqueness behave essentially as semigroup free products of the same monoids, up to the fact that the product of two reduced sequences may not be reduced. 

Using monadic ancestry, we may deal with this latter issue, and show the following main theorem.

\begin{theoremB}
Let $M_1, M_2$ be two word-hyperbolic monoids with $1$-uniqueness (with uniqueness). Then the monoid free product $M_1 \ast M_2$ is word-hyperbolic with $1$-uniqueness (with uniqueness).
\end{theoremB}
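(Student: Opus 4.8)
The plan is to build the combing $R = \Alt(R_1, R_2)$ for $M = M_1 \ast M_2$ and to realise its table $\cT_M(R)$ as the set of ancestors of a context-free language under a context-free monadic rewriting system performing the boundary cancellation of case~(3) of Lemma~\ref{Lem:Mult_table_MON_lemma}. Write $R_i' = R_i - \{\varepsilon\}$; by $1$-uniqueness every non-empty word of $R$ factors into syllables from $R_1' \cup R_2'$ and is reduced. Since $M_1, M_2$ are monoids they are $1$-extendable (Lemma~\ref{Lem:Types-of-extendable}), so Theorem~\ref{Thm:Sgp_extendable_result} applies; re-running its proof with the factor tables $\cT_{M_i}(R_i)$ in place of the $\cT(\overline{R_i})$ yields a context-free language $T_{12} \subseteq R \#_1 R \#_2 R^\trev$ consisting of exactly the triples $w_1 \#_1 w_2 \#_2 x^\trev$ with $w_1 \cdot w_2 =_S x$ in the semigroup free product $S$. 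By Lemma~\ref{Lem:Monoid-reduced-fp-NFT} these are precisely the reduced products, i.e. cases~(1) and~(2) of Lemma~\ref{Lem:Mult_table_MON_lemma}.

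Next I would introduce the cancellation system
\[
\sS = \big\{ (c \#_1 d,\, \#_1) \mid c, d \in R_i' \text{ and } c \cdot d =_{M_i} 1 \text{ for some } i \in \{1,2\} \big\}.
\]
Every right-hand side is the single letter $\#_1$ and $c, d$ are non-empty, so $\sS$ is monadic; its left-hand-side language is context-free, since for each $i$ the set $\{ c \#_1 d \mid c,d \in R_i',\ cd =_{M_i} 1\}$ is obtained from $\cT_{M_i}(R_i)$ by intersecting with the regular language $A_i^+ \#_1 A_i^+ \#_2$ --- which, by $1$-uniqueness, isolates exactly those triples whose product is the identity (hence whose third coordinate is $\varepsilon$) --- and then deleting the trailing $\#_2$ via a rational transduction. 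Thus $\sS$ is a $\CF$-monadic rewriting system.

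I then claim $\cT_M(R) = \anc{\sS}(T_{12}) \cap (R \#_1 R \#_2 R^\trev)$. For ``$\subseteq$'', given $w_1 \cdot w_2 =_M x$ with $w_1, w_2, x \in R$, iterating case~(3) of Lemma~\ref{Lem:Mult_table_MON_lemma} removes, one pair at a time, two syllables straddling the central $\#_1$ whose product is trivial in a factor --- exactly an application of a rule of $\sS$ --- until the surviving product lies in case~(1) or~(2), i.e. in $T_{12}$; so $w_1 \#_1 w_2 \#_2 x^\trev \xra{\sS} t$ for some $t \in T_{12}$. Crucially, the cancelling pairs nest around the \emph{single} junction symbol $\#_1$, which is reused at every reduction step, so $\sS$-rewriting captures the entire recursion of case~(3). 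For ``$\supseteq$'', any word of $R \#_1 R \#_2 R^\trev$ lying in $\anc{\sS}(T_{12})$ rewrites to some $\alpha \#_1 \beta \#_2 x^\trev \in T_{12}$; since each rule deletes a factor-identity pair, $w_1 \equiv \alpha \gamma_1$ and $w_2 \equiv \gamma_2 \beta$ with $\gamma_1 \gamma_2 =_M 1$, whence $w_1 w_2 =_M \alpha \beta =_M x$. As $T_{12} \in \CF$, $\sS$ is $\CF$-monadic, and $\CF$ has the monadic ancestor property (Example~\ref{Ex:CF-is-super-afl}), the language $\anc{\sS}(T_{12})$ is context-free, and intersecting with the regular language $R \#_1 R \#_2 R^\trev$ keeps it so. The uniqueness clauses follow at once: a non-empty reduced word contains a non-identity syllable and so cannot represent $1_M$, giving $1$-uniqueness, and if the $R_i$ are combings with uniqueness then reduced alternating words biject with the elements of $M$.

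The step I expect to be the main obstacle is the first paragraph's assertion that feeding the monoid tables $\cT_{M_i}(R_i)$ into the construction of Theorem~\ref{Thm:Sgp_extendable_result} returns exactly the reduced-product table $T_{12}$ for the specific combing $R$: because word-hyperbolicity is not preserved under an arbitrary change of combing, Theorem~\ref{Thm:Sgp_extendable_result} cannot be invoked purely as a black box, and one must re-examine its proof, paying attention to the bookkeeping around $\varepsilon$ and the adjoined identity $\bi$ (and to products that involve the empty word). The genuinely new content is the verification that the recursive cancellation of case~(3) is modelled faithfully by iterated $\sS$-rewriting around the single junction $\#_1$; this is the part I would write out in full detail.
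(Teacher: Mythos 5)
Your proposal is correct and follows essentially the same route as the paper: obtain the context-free semigroup free product table via Theorem~\ref{Thm:Sgp_extendable_result} (monoids being $1$-extendable), then realise $\cT_M(R)$ as $\anc{\sS}(\cT_S(R)) \cap R\#_1 R\#_2 R^\trev$ for the junction-cancellation system $\sS = \{(u\#_1 v, \#_1) \mid uv =_{M_i} 1\}$, whose left-hand sides come from a quotient of $\cT_{M_i}(R_i)$, and conclude by the monadic ancestor property. The two directions of the equality, including the induction driven by case~(3) of Lemma~\ref{Lem:Mult_table_MON_lemma} and the treatment of the uniqueness clauses, match the paper's Lemmas~\ref{Lem:cL1_in_cTM(R)} and~\ref{Lem:cTM(R)_in_cL1}.
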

\begin{proof}
Suppose $M_1$ (resp. $M_2$) is word-hyperbolic with $1$-uniqueness with respect to the regular combing $R_1$ (resp. $R_2$). As usual, we let $R = \Alt(R_1, R_2)$. If $M_1$ resp. $M_2$ are word-hyperbolic with $1$-uniqueness, then the only element of $R$ representing the identity element is $\varepsilon$, as the only element of $R_1$ resp. $R_2$ representing the identity element of $M_1$ resp. $M_2$ is $\varepsilon$. Analogously, if $M_1, M_2$ are word-hyperbolic with uniqueness, then every alternating word is reduced, and hence every pair of distinct words in $R$ represent distinct elements of $M$ by Lemma~\ref{Lem:Monoid-reduced-fp-NFT}. Hence it suffices to show that $M$ is word-hyperbolic with respect to $R$. 

For $i=1, 2$, we define the monadic rewriting system 
\begin{equation}
\sS_i = \{ (u \#_1 v, \#_1) \mid u, v \in R_i, u \cdot v = 1 \text{ in $M_i$} \}.
\end{equation}
Now, the language of left-hand sides of $\#_1$ in $\sS_i$ is 
\[
\{ u \#_1 v \mid u, v \in R_i, u \cdot v = 1 \text{ in $M_i$ }\} = \cT_{M_i}(R_i) / \{ \#_2 \varepsilon \},
\]
where $/$ denotes the right quotient, in this case by the regular language $\{ \#_2 \varepsilon \}$. As $\cT_{M_i}(R_i)$ is a context-free language, so too is the quotient of $\cT_{M_i}$ by any regular language. We conclude that $\sS_i$ is a context-free monadic rewriting system. Hence the union $\sS = \sS_1 \cup \sS_2$ is also a context-free monadic system. 

We will define the language 
\begin{equation}\label{Eq:Def_of_cL1}
\cl_1 = \anc{\sS}(\cT_S(R)) \cap R \#_1 R \#_2 R^\trev.
\end{equation}

We will prove that $\cl_1 = \cT_M(R)$, which suffices to prove the theorem (as a quick argument will show). This highlights that the language-theoretic properties of the monoid free product of word-hyperbolic monoids with $1$-uniqueness are not significantly more complicated than those of the semigroup free product of the same. One direction is easy, and depends on little more than the two facts that (i) if $u \cdot v =_S w$, then $u \cdot v =_M w$ for $u, v, w \in R$; and (ii) if $u \cdot v =_{M_i} 1$, then $u \cdot v =_{M} 1$ for $u, v \in R_i$. 

\begin{lemma}\label{Lem:cL1_in_cTM(R)}
$\cl_1 \subseteq \cT_M(R)$.
\end{lemma}
\begin{proof}
The proof of this is entirely analogous to that of Lemma~\ref{Lem:w_anc_is_in_table}, with one minor addition: note that if $w_1 \#_1 w_2 \#_2 w_3^\trev \in \cT_S(R)$, then we have $w_1 \cdot w_2 =_S w_3$ and hence also $w_1 \cdot w_2 =_M w_3$. If $u, v \in R_i$ are such that $u \cdot v =_{M_i} 1$ for some $i=1,2$, then $u \cdot v =_M 1$, so also $w_1u \cdot vw_2 =_M w_3$. Hence, if $w_1 u, vw_2 \in R$, then we conclude that $w_1(u \#_1 v)w_2 \#_2 w_3^\trev \in \cT_M(R)$, and 
\[
w_1(u\#_1v)w_2\#_2w_3^\trev \xr{\sS} w_1 \#_1 w_2 \#_2 w_3^\trev. 
\]
We leave the (simple) details to the reader. 
\end{proof}

We remark (as shall be needed in \S\ref{Subsec:Rast=R}) that the assumption of $1$-uniqueness is not needed to prove Lemma~\ref{Lem:cL1_in_cTM(R)}. The non-trivial part of the equality $\cT_M(R) = \cl_1$ is given by the following lemma. 

\begin{lemma}\label{Lem:cTM(R)_in_cL1}
$\cT_M(R) \subseteq \cl_1$. 
\end{lemma}
\begin{proof}
Suppose $w \equiv w_1 \#_1 w_2 \#_2 x^\trev \in \cT_M(R)$. Then $w_1, w_2, x \in R$, and $w_1 \cdot w_2 =_M x$. By $1$-uniqueness, $w_1, w_2$, and $x$ are all necessarily reduced (though $w_1w_2$ may not be). Hence we can apply Lemma~\ref{Lem:Mult_table_MON_lemma}. If we are in case (1) or (2), then by Lemma~\ref{Lem:Mult_table_SGP_lemma} we have $w_1 \cdot w_2 =_S x$, and so $w_1 \#_1 w_2 \#_2 x^\trev \in \cT_S(R)$, and hence, using no rewritings, we find
\[
w \equiv w_1 \#_1 w_2 \#_2 x^\trev \in \anc{\sS}(\cT_S(R)) \cap R \#_1 R \#_2 R^\trev = \cl_1.
\] 
If we are instead in case (3), then we must use $\sS$ non-trivially. As $x'_k, x_k'' \in R_{X'(k)}$ satisfy $x'_k \cdot x''_k =_{M_{X'(k)}} 1$, we have $(x'_k \#_1 x''_k, \#_1) \in \sS_{X'(k)} \subseteq \sS$. Hence also
\begin{align}\label{Eq:w-rewrites-inductive-case-3}
w \equiv w_1 \#_1 w_2 \#_2 x^\trev \equiv w_1' (x'_k \#_1 x''_k) w_2' \#_2 x^\trev \xr{\sS} w_1' \#_1 w_2' \#_2 x^\trev.
\end{align}
As $w_1', w_2', x \in R$ satisfy $w_1' \cdot w_2' =_M x$, and $|w_1'|+|w_2'| < |w_1|+|w_2|$, we may use induction on the parameter $|w_1| + |w_2|$ (the base cases being cases (1) and (2) above), where the inductive hypothesis yields $w_1' \#_1 w_2' \#_2 x^\trev \in \cl_1$. Thus $w_1' \#_1 w_2' \#_2 x^\trev \in \anc{\sS}(\cT_S(R))$, so by \eqref{Eq:w-rewrites-inductive-case-3} we also have $w \in \anc{\sS}(\cT_S(R))$. We conclude by induction that $w \in \cl_1$, as desired. 
\end{proof}

Hence we have found a regular combing $R$ of $M$ such that $\cT_M(R)$ is given by the right-hand side of \eqref{Eq:Def_of_cL1}. The right-hand side of \eqref{Eq:Def_of_cL1} is context-free, by the following chain of reasoning: (i) $\cT_S(R) \in \CF$ by Theorem~\ref{Thm:Sgp_extendable_result}; and hence (ii) $\anc{\sS}(\cT_S(R)) \in \CF$, as the class of context-free languages has the monadic ancestor property and $\sS$ is a context-free monadic rewriting system; and (iii) thus $\cT_M(R) \in \CF$ as $\CF$ is closed under intersection with regular languages. Hence $(R, \cT_M(R))$ is a word-hyperbolic structure for $M = M_1 \ast M_2$. 
\end{proof}

Word-hyperbolicity with $1$-uniqueness is not an unusual phenomenon. For example, it always holds in hyperbolic groups, so we find the following immediate corollary of Theorem~\ref{Thm:Monoid-fp-theorem-1-unqiueness}. 

\begin{corollary}\label{Cor:groups}
The free product of two hyperbolic groups is hyperbolic. 
\end{corollary}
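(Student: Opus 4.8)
The plan is to feed two hyperbolic groups into Theorem~\ref{Thm:Monoid-fp-theorem-1-unqiueness} and then translate the conclusion back into the language of Gromov-hyperbolicity. The bridge in both directions is \cite[Corollary~4.3]{Duncan2004}, which identifies word-hyperbolicity with Gromov-hyperbolicity for groups; the only genuinely new ingredient is the observation that a hyperbolic group is word-hyperbolic \emph{with uniqueness} (and hence with $1$-uniqueness).

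First I would establish that a hyperbolic group $G$ is word-hyperbolic with uniqueness. By \cite[Corollary~4.3]{Duncan2004}, $G$ is word-hyperbolic, and for the uniqueness I would take as combing the regular language $R$ of shortlex normal forms arising from the shortlex automatic structure on $G$: this is a regular combing with uniqueness, the normal form of the identity is the empty word $\varepsilon$, and the associated multiplication table $\cT_G(R)$ is context-free. Since $\varepsilon \in R$ is then the unique word representing the identity, the same combing simultaneously witnesses word-hyperbolicity with $1$-uniqueness.

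Now let $G_1, G_2$ be hyperbolic. By the previous step each $G_i$ is a word-hyperbolic monoid with uniqueness, so Theorem~\ref{Thm:Monoid-fp-theorem-1-unqiueness} shows that the monoid free product $G_1 \ast G_2$ is word-hyperbolic (with uniqueness). As observed after \eqref{Eq:mon_fp_multiplication}, the monoid free product of two groups is their group free product, so $G_1 \ast G_2$ is a group which is word-hyperbolic as a monoid, and hence word-hyperbolic as a semigroup by \cite[Theorem~3.5]{Duncan2004}. Reapplying \cite[Corollary~4.3]{Duncan2004} in the reverse direction, $G_1 \ast G_2$ is Gromov-hyperbolic, as required.

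The main obstacle is the first step, precisely because context-freeness of the table is \emph{not} invariant under change of combing (as the paper emphasises): \cite[Theorem~3.4]{Duncan2004} supplies a working combing for each generating set but gives no control over its uniqueness properties, so one cannot simply quote ``$G$ is word-hyperbolic'' and ``$G$ has a unique regular combing'' separately and splice them together. The point is therefore to exhibit one specific combing that is at once unique, anchored at $\varepsilon$, and context-free-tabled, and the shortlex automatic structure of a hyperbolic group supplies exactly such a combing.
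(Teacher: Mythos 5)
Your argument is correct and follows the paper's proof essentially verbatim: reduce to Theorem~\ref{Thm:Monoid-fp-theorem-1-unqiueness} via the equivalence of Gromov- and word-hyperbolicity for groups, then exhibit a regular combing with $1$-uniqueness. The only difference is the witnessing combing --- you take shortlex normal forms (giving uniqueness, hence $1$-uniqueness), whereas the paper takes the language of all geodesics, for which the empty word is the unique representative of the identity; both work, your version because the shortlex language is a regular sublanguage of the geodesic combing and so inherits a context-free table by intersecting with $R\#_1 R\#_2 R^\trev$, a small step you should make explicit.
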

\begin{proof}

By \cite[Theorem~1]{Gilman2002} (cf. also \cite[Corollary~4.3]{Duncan2004}), a group is hyperbolic (in the geometric sense) if and only if it is word-hyperbolic (in the language-theoretic sense of this paper). Hence, as the monoid free product of two groups is the same as the (ordinary) free product of two groups, in view of Theorem~\ref{Thm:Monoid-fp-theorem-1-unqiueness} it suffices to show that hyperbolic groups are word-hyperbolic with $1$-uniqueness. But every hyperbolic group $G$, generated by a finite set $A$, is word-hyperbolic with respect to the regular combing $R \subseteq A^\ast$ given by the language of geodesics in the Cayley graph of $G$, and there is only one geodesic corresponding to the identity element, cf. \cite[Theorem~4.2]{Coornaert1990}. 
\end{proof}

Of course, Corollary~\ref{Cor:groups} is well-known in geometric group theory, and is not difficult to show geometrically. Our approach, via Theorem~\ref{Thm:Monoid-fp-theorem-1-unqiueness}, gives a proof which instead goes via formal language theory.

\subsection{$\star$-word-hyperbolic monoids}\label{Subsec:Rast=R}

Suppose $M_i$ (for $i=1,2$) is a word-hyperbolic monoid with respect to the regular combing $R_i$. If $R_i^\ast = R_i$, then we say that $M_i$ is $\star$-\textit{word-hyperbolic} (with respect to $R_i$). We do not know if every word-hyperbolic monoid is $\star$-word-hyperbolic, but do not suspect this to be the case: $\star$-word-hyperbolic monoids appear to inch too close to monoids with context-free word problem. 

\begin{example}
It is not difficult to show that the bicyclic monoid $B = \pres{Mon}{b,c}{bc=1}$ is word-hyperbolic with respect to the regular combing $c^\ast b^\ast$ (as is shown explicitly in \cite[Example~3.8]{Duncan2004}). Of course, for this combing, we have $(c^\ast b^\ast)^\ast \neq c^\ast b^\ast$. However, $B$ is also word-hyperbolic with respect to the combing $\{ b, c \}^\ast$, as is easily seen by using the complete monadic rewriting system $(bc, 1)$ (cf. also the first few sentences of \cite[Theorem~3.1]{Cain2012}, coupled with \cite[Corollary~3.8]{Book1982}). In particular the bicyclic monoid is $\star$-word-hyperbolic.

In fact, this example is a consequence of the general fact that the group of units of the bicyclic monoid is trivial. Recall that a monoid is \textit{special} if every defining relation is of the form $w_i = 1$ (see \cite[Chapter~III]{Adian1966}). As proved by the author, a special monoid $M$ has context-free word problem -- in the sense of Duncan \& Gilman \cite[\S5]{Duncan2004} -- if and only if its group of units $U(M)$ is virtually free \cite{NybergBrodda2020b}. Any monoid generated by a finite set $A$ clearly has context-free word problem if and only if it is word-hyperbolic with respect to the regular combing $A^\ast$ (one direction is trivial by a rational transduction; the other is observed at the beginning of the proof of \cite[Theorem~3.1]{Cain2012}). Cf. also \cite[Theorem~2(2)]{Gilman2002}. Thus any context-free monoid is $\star$-word-hyperbolic. 
\end{example}

The main theorem of this section is the following, which uses polypartisan ancestors:

\begin{theorem}\label{Thm:star-hyp-fp}
Let $M_1, M_2$ be two $\star$-word-hyperbolic monoids. Then the monoid free product $M_1 \ast M_2$ is $\star$-word-hyperbolic. 
\end{theorem}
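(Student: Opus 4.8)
The plan is to choose a star-closed regular combing $R$ for $M = M_1 \ast M_2$ and prove $\cT_M(R) \in \CF$ by separating the two distinct ways in which a concatenation of $R$-words can fail to already be reduced, handling each with its own rewriting layer.

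First I would fix the combing $R = \Alt(R_1, R_2)$. Because $R_i^\ast = R_i$, grouping the maximal single-alphabet blocks of any word in $(R_1 \cup R_2)^\ast$ places each block in the corresponding $R_i$, so in fact $R = (R_1 \cup R_2)^\ast$; in particular $R^\ast = R$, and $R$ is a regular combing of $M$ exactly as in \S\ref{Subsec:1-uniqueness}. Thus it suffices to show $\cT_M(R) \in \CF$, for then $(R, \cT_M(R))$ witnesses $\star$-word-hyperbolicity. Writing $S = M_1 \ast M_2$ for the semigroup free product, monoids are $1$-extendable by Lemma~\ref{Lem:Types-of-extendable}, so Theorem~\ref{Thm:Sgp_extendable_result} gives $\cT_S(R) \in \CF$, which will be my base object.

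Next I would isolate the two obstructions. A product $uv$ of $R$-words fails to be its own reduced form either through \emph{boundary cancellation} (a suffix of $u$ and a prefix of $v$ lie in a common factor and multiply to its identity) or through \emph{internal nullity} (some block of a word in $R$ represents the identity of its factor without being empty --- the phenomenon that was absent under $1$-uniqueness). For boundary cancellation I reuse the context-free monadic system $\sS = \sS_1 \cup \sS_2$ from the proof of Theorem~\ref{Thm:Monoid-fp-theorem-1-unqiueness}. For internal nullity, for $i = 1,2$ set $N_i = \{ r \in R_i \mid r =_{M_i} 1 \}$, which is the right quotient $\cT_{M_i}(R_i) / \{ \#_1 \#_2 \}$ and hence context-free; I then define the special (hence monadic) systems $\sP = \{ (r, \varepsilon) \mid r \in N_1 \cup N_2 \}$ and its reverse $\sP^\trev = \{ (r^\trev, \varepsilon) \mid r \in N_1 \cup N_2 \}$, each context-free. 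A single $\sP$-step deletes a null block while preserving the element of $M$, and a $\sP$-step on $w$ corresponds bijectively to a $\sP^\trev$-step on $w^\trev$.

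The central claim is then
\[
\cT_M(R) = \sR_{(2)}\big( \anc{\sS}(\cT_S(R)) \big) \cap \big( R \#_1 R \#_2 R^\trev \big),
\]
where the polypartisan ancestor uses $\sR_0 = \sR_1 = \sP$ (reducing the $u$- and $v$-parts) and $\sR_2 = \sP^\trev$ (reducing the reversed $w$-part). The right-hand side is context-free: $\anc{\sS}(\cT_S(R)) \in \CF$ since $\CF$ has the monadic ancestor property, then $\sR_{(2)}(\cdot) \in \CF$ by Proposition~\ref{Prop:Polypartisan_preserves}, and finally intersection with a regular language preserves $\CF$. For the inclusion $\supseteq$, every rule of $\sS$, $\sP$, and $\sP^\trev$ preserves the relation ``(part before $\#_1$) $\cdot$ (part between the markers) $=_M$ (reverse of part after $\#_2$)'', so any right-hand word with all three parts in $R$ lies in $\cT_M(R)$. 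For $\subseteq$, given $u \#_1 v \#_2 w^\trev \in \cT_M(R)$ I first reduce $u, v, w$ to reduced forms $\widehat u, \widehat v, \widehat w \in R$ by deleting aligned null blocks, giving $\sP$-paths $u \xra{\sP} \widehat u$, $v \xra{\sP} \widehat v$, $w^\trev \xra{\sP^\trev} \widehat w^\trev$ that stay inside $R$; since $\widehat u, \widehat v, \widehat w$ are now reduced with $\widehat u \widehat v =_M \widehat w$, Lemma~\ref{Lem:Mult_table_MON_lemma} applied to these reduced words places $\widehat u \#_1 \widehat v \#_2 \widehat w^\trev$ in $\anc{\sS}(\cT_S(R))$ by resolving the remaining boundary cancellation through $\sS$ exactly as in the proof of Lemma~\ref{Lem:cTM(R)_in_cL1}, whence $u \#_1 v \#_2 w^\trev \in \sR_{(2)}(\anc{\sS}(\cT_S(R)))$.

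The main obstacle is getting the two layers to decouple cleanly: internal null blocks must be stripped off first by the \emph{outer} polypartisan layer --- which is confined to individual coordinates and therefore cannot reach across $\#_1$ --- leaving genuinely reduced words whose product is then governed by the cross-boundary system $\sS$ built into the base language. The delicate points are verifying that $\sP^\trev$ faithfully mirrors the reduction of $w$ through the reversal in the third coordinate, and that for the $\subseteq$ inclusion one can always select a $\sP$-reduction that simultaneously reaches the reduced form and stays within $R$, so that Lemma~\ref{Lem:Mult_table_MON_lemma} genuinely applies; the $\supseteq$ inclusion, needing only element-preservation of each rule, should be routine.
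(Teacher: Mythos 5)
Your proposal is correct and follows essentially the same route as the paper: the combing $\Alt(R_1,R_2)=(R_1\cup R_2)^\ast$, the special context-free systems deleting null blocks (your $\sP$, $\sP^\trev$ are the paper's $\sT$, $\sT^\trev$) together with the cross-boundary system $\sS$ from the $1$-uniqueness proof, and a polypartisan ancestor of (an intersection-decorated version of) $\anc{\sS}(\cT_S(R))$, with the two inclusions argued by equivariance and by reducing-then-merging via $R_i^\ast=R_i$. The only differences are cosmetic: the paper takes the polypartisan ancestor of $\cl_1$ (which already carries the intersection with $R\#_1R\#_2R^\trev$) rather than of $\anc{\sS}(\cT_S(R))$ directly, and indexes the three rewriting systems as $\sR_1,\sR_2,\sR_3$.
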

\begin{proof}
Suppose $M_1, M_2$ are $\star$-word-hyperbolic monoids with respect to the regular combings $R_1$ resp. $R_2$. Then $R_1^\ast = R_1$ and $R_2^\ast = R_2$. Let, as usual, $R = \Alt(R_1, R_2)$, and let $M$ denote the monoid free product $M_1 \ast M_2$. However, note that, in this case, we can simplify $\Alt(R_1, R_2) = (R_1 \cup R_2)^\ast$. It suffices to show that $\cT_M(R)$ is a context-free language, as $R$ clearly combs $M$. We have done most of the heavy lifting in the proofs of Theorem~\ref{Thm:Sgp_extendable_result} and Theorem~\ref{Thm:Monoid-fp-theorem-1-unqiueness}. However, unlike in the setting of these theorems, we cannot assume that every element of $R$ is reduced. We remedy this with a context-free monadic rewriting system.

We first define, for $i=1,2$, the rewriting systems
\[
\sT_i = \{ (w, 1) \mid w \in R_i, w =_{M_i} 1 \}.
\]
Then $\sT_i$ is a context-free monadic rewriting system, as the left-hand sides of $1$ are obtained by taking a right quotient of the context-free multiplication table $\cT_{M_i}(R_i)$ by the regular language $\#_1\#_2$. Note that for every rule $(w, 1) \in \sT_i$, we have $w =_M 1$, by the properties of the monoid free product. We let $\sT = \sT_1 \cup \sT_2$, which is also a context-free monadic system. We let further $\sT^\trev$ be the system consisting of all rules $(w^\trev, 1)$ such that $(w, 1) \in \sT$. Then $\sT^\trev$ is a context-free monadic rewriting system, as the class $\CF$ is closed under reversal. 

Note that for every word $w \in R$, there exists some (not necessarily unique) reduced $w' \in R$ such that $w \xra{\sT} w'$. Of course, as $\sT$ is $M$-equivariant, for such $w, w'$ we have $w =_M w'$. 

Let $\sR_1 = \sR_2 = \sT$, and let $\sR_3 = \sT^\trev$. Consider the polypartisan ancestor 
\begin{equation}\label{Eq:Def_of_cl2}
\cl_2 = \sR_{(3)}(\cl_1) \cap R \#_1 R \#_2 R^\trev.
\end{equation}
Recall the definition of $\cl_1$ as \eqref{Eq:Def_of_cL1}, and see \S\ref{Sec:Polypartisan} for notation pertaining to polypartisan ancestors. By Lemma~\ref{Lem:cL1_in_cTM(R)} (and the remark following it), we have $\cl_1 \subseteq \cT_M(R)$. Hence $\cl_2$ consists of some collection of words of the form $w_1 \#_1 w_2 \#_2 x^\trev$ with $w_1, w_2, x \in R$ such that there exist words $w_1', w_2', x' \in R$ with $w_1' \cdot w_2' =_M x'$. As the systems $\sR_1$ and $\sR_2$ are $M$-equivariant, and $\sR_3$ is $M^\trev$-equivariant, it follows easily that $w_1 =_M w_1', w_2 =_M w_2'$, and $x =_M x'$. Thus $w_1 \cdot w_2 =_M x$, so it follows that $\cl_2 \subseteq \cT_M(R)$. We show the reverse inclusion, which (by a simple argument) will suffice to show that $M$ is word-hyperbolic. 

\begin{lemma}\label{Lem:cTM_=_CL2}
$\cT_M(R) = \cl_2$.
\end{lemma}
\begin{proof}
We have shown the inclusion $\cl_2 \subseteq \cT_M(R)$ above. For the inclusion $\cT_M(R) \subseteq \cl_2$, suppose that $w \equiv w_1 \# w_2 \#_2 x^\trev \in \cT_M(R)$. First, $w_1$ is an alternating product, say $w_1 \equiv w_{1,0} w_{1,1} \cdots w_{1,k}$, where $w_{1,i} \in R_{X(i)}$ for some parametrisation $X$. Now, $w_1$ may not be reduced; however, by removing each factor $w_{i,j}$ with $w_{i,j} =_{M_{X(i)}} = 1$, we obtain a reduced word $w_1' \equiv w_{1,i_1} w_{1,i_2} \cdots w_{1,i_\ell}$. Now, it may be the case that $w_{1,i_j} \sim w_{1,i_{j+1}}$, i.e. that $w_{1,i_j}$ and $w_{1,i_{j+1}}$ come from the same factor, and that the factorisation of $w_1'$ is not alternating. However, and crucially, as $R_{X(i)}^\ast = R_{X(i)}$, we can find some word $w''_{1,i_j} \in R_{X(i)}$ such that $w''_{1,i_j} \equiv w'_{1,i_j} w'_{1,i_{j+1}}$. By merging all terms in this way, we find an alternating factorisation of $w_1'$, so $w_1' \in R$. Thus there exists a word $w_1' \in R$ such that $w_1 \xra{\sT} w_1'$. In exactly the same way, there are words $w_2', x' \in R$ such that $w_2 \xra{\sT} w_2'$ and $x \xra{\sT} x'$. In particular, $x^\trev \xra{\sT^\trev} (x')^\trev$. We note in passing that $w_1' \cdot w_2' =_M x'$, by $M$-equivariance. It follows from the above that 
\begin{equation}\label{Eq:Annoying-poly-mon-fp}
w \equiv w_1 \#_1 w_2 \#_2 x^\trev \in \sR_{(3)}(\{ w_1' \#_1 w_2' \#_2 (x')^\trev \}) \cap R \#_1 R \#_2 R^\trev.
\end{equation}

As the words $w_1', w_2', x' \in R$ are reduced and satisfy $w_1' \cdot w_2' =_M x'$, we have 
\begin{equation}\label{Eq:w1'w2'x'_in_cl}
w_1' \#_1 w_2' \#_2 (x')^\trev \in \cl_1.
\end{equation}
From \eqref{Eq:Annoying-poly-mon-fp} and \eqref{Eq:w1'w2'x'_in_cl}, we find immediately by the definition \eqref{Eq:Def_of_cl2} that $w \in \cl_2$, which is what was to be shown. 
\end{proof}

To finish our proof, we must simply conclude that $\cl_2$ is context-free, which follows by combining the facts that (i) $\cl_1$ is a context-free language (the proof of this uses nothing about $1$-uniqueness); (ii) $\sR_{(3)}(\cl_1)$ is a context-free language by Proposition~\ref{Prop:Polypartisan_preserves}; and (iii) the intersection of a context-free language with a regular language is context-free. Hence, as $\cT_M(R) = \cl_2$ by Lemma~\ref{Lem:cTM_=_CL2}, it follows that $(R, \cT_M(R))$ is a word-hyperbolic structure for $M$; as 
\[
R^\ast = \Alt(R_1, R_2)^\ast = ((R_1 \cup R_2)^\ast)^\ast = (R_1 \cup R_2)^\ast = \Alt(R_1, R_2) = R,
\]
it follows that $M$ is $\star$-word-hyperbolic.

\end{proof}

The reader may feel somewhat unsatisfied of the lack of a theorem stating simply that ``the free product of two word-hyperbolic monoids is word-hyperbolic'' (see also \S\ref{Sec:Finalremarks}). However, the combination of Theorem~\ref{Thm:Monoid-fp-theorem-1-unqiueness} and Theorem~\ref{Thm:star-hyp-fp} essentially covers all cases of interest. We demonstrate this now, by showing that the $\star$-word-hyperbolic case can be viewed as a ``complement'' to the $1$-uniqueness case treated in \S\ref{Subsec:1-uniqueness}. 

\begin{proposition}\label{Prop:if_not_1_unique_but_alt_thenRast}
Suppose $M_1, M_2$ are word-hyperbolic without $1$-uniqueness with respect to regular combings $R_1$ resp. $R_2$, and suppose further that the monoid free product $M = M_1 \ast M_2$ is word-hyperbolic with respect to some regular combing $R$. If $\Alt(R_1, R_2) \subseteq R$, then $M_1, M_2$, and $M$ are all $\star$-word-hyperbolic.
\end{proposition}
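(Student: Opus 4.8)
The plan is to reduce everything to context-free word problems. Recall from the Example in \S\ref{Subsec:Rast=R} that a finitely generated monoid has context-free word problem if and only if it is word-hyperbolic with respect to the combing consisting of \emph{all} words over its generators, and that this combing is star-closed; hence it suffices to prove that each of $M_1$, $M_2$, and $M$ has context-free word problem, as they would then be $\star$-word-hyperbolic with respect to $A_1^\ast$, $A_2^\ast$, and $A^\ast = (A_1 \cup A_2)^\ast$ respectively. Moreover, since each factor embeds into the free product, for $w \in A_i^\ast$ we have $w =_{M_i} 1$ if and only if $w =_M 1$, so $\WP(M_i) = \WP(M) \cap A_i^\ast$; as $\CF$ is closed under intersection with regular languages, it is enough to prove that $\WP(M) = \{ w \in A^\ast \mid w =_M 1 \}$ is context-free.

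The key point is to exploit the failure of $1$-uniqueness. Since neither $M_1$ nor $M_2$ has $1$-uniqueness, I fix \emph{nonempty} words $e_1 \in R_1$ and $e_2 \in R_2$ with $e_i =_{M_i} 1$ (and hence $e_i =_M 1$), and for each generator $a \in A_i$ a word $\rho(a) \in R_i$ with $\rho(a) =_{M_i} a$, which exists as $R_i$ combs $M_i$. I then build a rational transduction $\theta \colon A^\ast \to \Alt(R_1, R_2)$ realised by a finite-state transducer that reads $w$ letter by letter, emits $\rho(a)$ for each letter $a$, and, whenever two consecutive nonempty emitted blocks would come from the same factor, first inserts the identity word $e_j$ of the \emph{other} factor as a separator. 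The transducer need only remember the factor of the last nonempty block emitted, so $\theta$ is indeed rational; its output is alternating by construction, so $\theta(w) \in \Alt(R_1, R_2) \subseteq R$, and since $\rho(a) =_M a$ and each inserted $e_j =_M 1$ we get $\theta(w) =_M w$. Here both hypotheses enter crucially: the inclusion $\Alt(R_1, R_2) \subseteq R$ places the image inside the combing $R$, and the failure of $1$-uniqueness supplies the nonempty separators $e_j$ (inserting $\varepsilon$ would fail to restore alternation).

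To finish, observe that $L_R := \{ r \in R \mid r =_M 1 \}$ is context-free: since $\varepsilon \in \Alt(R_1,R_2) \subseteq R$, a word $r \in R$ satisfies $r =_M 1$ precisely when $r \#_1 \#_2 \in \cT_M(R)$, so $L_R$ is obtained from $\cT_M(R)$ (which is context-free as $M$ is word-hyperbolic with respect to $R$) by intersecting with the regular language $R \#_1 \#_2$ and deleting the suffix $\#_1 \#_2$ by a rational transduction. Finally, $\theta(w) \in R$ together with $\theta(w) =_M w$ yields $w =_M 1 \iff \theta(w) \in L_R$, that is, $\WP(M) = \theta^{-1}(L_R)$; as $\CF$ is closed under inverse rational transduction, $\WP(M)$ is context-free, and the reduction above then gives the claim for $M_1$, $M_2$, and $M$. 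The main obstacle is the construction of $\theta$ in the second paragraph, namely verifying that the separator-insertion is carried out by a genuine finite-state transducer whose image lands inside $\Alt(R_1,R_2)$; this is exactly the step where the two standing hypotheses are consumed.
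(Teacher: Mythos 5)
Your central device --- the rational transduction $\theta$ that pads an arbitrary word with nonempty identity-representing separators $e_j$ so that its image lands in $\Alt(R_1,R_2)\subseteq R$ --- is sound, and it is essentially the same trick as the paper's proof, which inserts a fixed nonempty $z\in R_1$ with $z=_{M_1}1$ between blocks from $R_2$ in order to read $\cT_{M_2}(R_2^\ast)$ off $\cT_M(R)$ by a rational transduction. The gap is in what you reduce to. You set $\WP(M)=\{w\in A^\ast\mid w=_M 1\}$ and conclude from its context-freeness that $M$ is word-hyperbolic with respect to $A^\ast$. That implication is false for monoids: being word-hyperbolic with respect to $A^\ast$ means that the full multiplication table $\cT_M(A^\ast)$ is context-free, equivalently that the two-tape word problem $\{u\# v^\trev\mid u=_M v\}$ in the sense of Duncan \& Gilman is context-free --- and this two-tape language, not the identity language, is what the Example in \S\ref{Subsec:Rast=R} means by ``context-free word problem''. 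The two notions genuinely diverge: for $\pres{Mon}{a,b}{ab=ba}$ the identity language is $\{\varepsilon\}$, yet $\cT_M(A^\ast)$ is not context-free (intersecting it with $(ab)^\ast\#_1\#_2\, b^\ast a^\ast$ yields $\{(ab)^p\#_1\#_2 b^p a^p\}$, which rationally transduces onto $\{c^pb^pa^p\}$). So your argument, as written, proves only that the identity language of $M$ is context-free, which does not give $\star$-word-hyperbolicity of anything; the step you flag as the main obstacle (rationality of $\theta$) is in fact fine, and the real problem sits in the first and last paragraphs.

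The repair is to aim $\theta$ at the whole table rather than at $L_R$: since $\theta(u)\in R$ and $\theta(u)=_M u$ for every $u\in A^\ast$, one has $uv=_M w$ if and only if $\theta(u)\#_1\theta(v)\#_2\theta(w)^\trev\in\cT_M(R)$, so $\cT_M(A^\ast)$ is the inverse image of $\cT_M(R)$ under the rational transduction $u\#_1 v\#_2 w^\trev\mapsto\theta(u)\#_1\theta(v)\#_2\theta(w)^\trev$ (rationality of the third block uses that the class of rational relations is reversal-closed). This corrected route would actually prove more than the paper does --- that $M$, and hence each $M_i$, is word-hyperbolic with respect to the full combing $A^\ast$ resp.\ $A_i^\ast$, whereas the paper only produces the combings $R_i^\ast$ for the factors and then invokes Theorem~\ref{Thm:star-hyp-fp} for $M$ --- but as it stands the proposal does not establish the proposition.
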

\begin{proof}
Suppose that $z \in R_1$ is a non-empty word such that $z =_{M_1} 1$. Then also $z =_M 1$. Let 
\[
u_1, \dots, u_k, v_1, \dots, v_m, w_1, \dots, w_n \in R_2
\]
be any words such that
\[
(u_1 u_2 \cdots u_k) \cdot (v_1 v_2 \cdots v_m) =_{M_2} (w_1 \cdots w_n).
\]
Then certainly 
\begin{equation}\label{Eq:zwzwzwz}
z u_1 z u_2 \cdots z u_k \cdot z v_1 z v_2 \cdots z v_m =_M z w_1 z w_2 z \cdots z w_n.
\end{equation}
Now the left-hand side of \eqref{Eq:zwzwzwz} is of the form $r \cdot s$, where $r, s \in \Alt(R_1, R_2)$, and the right-hand side is also an element of $\Alt(R_1, R_2)$. Hence, we have 
\begin{equation}\label{Eq:zhashashash}
z u_1 z u_2 \cdots z u_k \#_1 z v_1 z v_2 \cdots z v_m \#_2 (z w_1 z w_2 z \cdots z w_n)^\trev
\end{equation}
is an element of $\Alt(R_1, R_2) \#_1 \Alt(R_1, R_2) \#_2 \Alt(R_1, R_2)^\trev$. As $\Alt(R_1, R_2) \subseteq R$, we find that \eqref{Eq:zhashashash} is an element of $R \#_1 R \#_2 R^\trev$, and hence in $\cT_M(R)$. 

We can thus simulate the multiplication table for $M_2$ with respect to $R_2^\ast$ by using $\cT_M(R)$, and inserting sufficiently many $z$-symbols between the words in $R_2^\ast$; more rigorously, we can perform a rational transduction of $\cT_M(R)$ to first obtain all words of the form \eqref{Eq:zhashashash}, and then kill all symbols $z$ by a homomorphic image, and in this way obtain $\cT_{M_2}(R_2^\ast)$, which will thus be context-free. Thus $M_2$ is $\star$-word-hyperbolic; by symmetry, so too is $M_1$. By Theorem~\ref{Thm:star-hyp-fp}, so too is $M$. 
\end{proof}

We remark on why this proposition is useful. Suppose the notation of the proposition. Given the ``alternating'' nature of a free product, it is very natural to ask for a regular combing $R$ of $M$ to at least \textit{contain} the alternating products of elements from $R_1$ and $R_2$. Indeed, if it did not, then the regular combing of the free product could be seen as wholly artificial, and not in any way dependent on the structure of the free factors. In this natural setting, Proposition~\ref{Prop:if_not_1_unique_but_alt_thenRast} then tells us: if $M_1$ and $M_2$ are word-hyperbolic, but without $1$-uniqueness, then we must have that $M_1$ and $M_2$ are in fact $\star$-word-hyperbolic. We elaborate on this remark in \S\ref{Subsec:NewDef}, and use this to suggest that a new definition of word-hyperbolic monoid may be suitable. No new results are presented therein, and so may be skipped without losing any readability of \S\ref{Sec:Finalremarks}.

\subsection{$1$-uniqueness as the norm}\label{Subsec:NewDef}

The definition of word-hyperbolic semigroups by Duncan \& Gilman has been noted by Cain \& Maltcev \cite{Cain2016} to lead to some minor technical issues to be fixed. Namely, Cain \& Maltcev note the following: there exist a finite set $A$, a regular language $R \subseteq A^+$ and two non-isomorphic semigroups $S, T$ each generated by $A$ such that $\cT_S(R) = \cT_T(R)$. That is, the word-hyperbolic structure $(R, \cT_S(R))$ does not necessarily determine the semigroup $S$ up to isomorphism.\footnote{On the other hand, if considering monoids, this is not an issue, as the problem arises from the fact that some generators can be indecomposable in a semigroup, which never happens in monoids.} If, on the other hand, the associated homomorphism $\pi \colon A^+ \to S$ is assumed to be injective on $A$, then one can show that uniqueness up to isomorphism \textit{does} hold \cite[Proposition~3.5]{Cain2016}, and that furthermore every word-hyperbolic semigroup admits a word-hyperbolic structure with this additional ``injectivity on generators'' requirement \cite[Proposition~3.6]{Cain2016}. It is therefore no real restriction to impose the requirement on word-hyperbolic semigroups that $\pi$ be injective on the generators. 

In a similar vein, we would like to suggest that for word-hyperbolic \textit{monoids} the earlier result (Proposition~\ref{Prop:if_not_1_unique_but_alt_thenRast}) demonstrates that $1$-uniqueness in word-hyperbolic monoids is natural. This argument is based on three desired premisses: 
\begin{enumerate}
\item the free product of two word-hyperbolic monoids ought to be word-hyperbolic; 
\item a word-hyperbolic structure for a free product should reflect the structure of the free factors in an alternating manner; and 
\item $\star$-word-hyperbolicity should be exceptional, rather than the norm. 
\end{enumerate}

If these premisses are accepted, and (2) is interpreted as in the paragraph following Proposition~\ref{Prop:if_not_1_unique_but_alt_thenRast}, then we conclude from Proposition~\ref{Prop:if_not_1_unique_but_alt_thenRast} that any given word-hyperbolic monoid ought to be either $\star$-word-hyperbolic, or else is word-hyperbolic with $1$-uniqueness. The third premiss would therefore guide us to prescribing that word-hyperbolic monoids with $1$-uniqueness should be the norm. If the premisses are accepted, a natural definition of word-hyperbolic monoid would thus be the following: a monoid $M$ is word-hyperbolic if and only if it admits a finite generating set $A$ and a regular combing $R$ such that (i) the multiplication table $\cT_M(R)$ is context-free; and (ii) $\varepsilon \in R$, and this is the only word in $R$ which represents $1 \in M$. If this were the definition of word-hyperbolic monoid, then the free product of two word-hyperbolic monoids is again word-hyperbolic (Theorem~\ref{Thm:Monoid-fp-theorem-1-unqiueness}). 

Whether these premisses (1)--(3) are acceptable or not depends on the reader. Ideally, we would like to bypass this definition-based argument and say that every word-hyperbolic monoid admits a word-hyperbolic structure with $1$-uniqueness, but we do not know whether this is the case. Indeed, one might suspect that this is not the case, as there are word-hyperbolic monoids which do not admit any word-hyperbolic structure with uniqueness \cite{Cain2012}. 

\section{Super-$\AFL$s and $\cc$-tabled groups}\label{Sec:Finalremarks}

\noindent The observant reader may have noticed that, for all our usage of the properties of context-free languages, we have nowhere used the words ``context-free grammar'' or ``pushdown automaton'', or any of the usual specifications of context-free languages. Indeed, we have only used two properties of the class $\CF$ of context-free languages, namely: 
\begin{enumerate}
\item $\CF$ is a reversal-closed full $\AFL$; and 
\item $\CF$ has the monadic ancestor property (see \S\ref{Subsec:Rewritingsystems}). 
\end{enumerate}

That is, in the terminology of \S\ref{Subsec:ETOL-subst}, we have only used the property that $\CF$ is a reversal-closed super-$\AFL$.\footnote{Similarly general statements involving reversal-closed super-$\AFL$s appear as the main results in previous work by  the author \cite{NybergBrodda2020c, NybergBrodda2020b, NybergBrodda2021f}. } Hence the main results of this article (Theorem~\ref{Thm:Sgp_extendable_result}, Theorem~\ref{Thm:Monoid-fp-theorem-1-unqiueness}, Theorem~\ref{Thm:star-hyp-fp}, and their corollaries) remain valid if $\CF$ is replaced in the definition of word-hyperbolicity by any other reversal-closed super-$\AFL$, such as $\IND$ or $\ETOL$. We have chosen not to state our theorems in this general form to maintain clarity; there does not, at present, seem to be a great deal of interest in the language-theoretic properties of multiplication tables outside the case of $\CF$ (i.e. hyperbolicity). However, some questions of this nature have been discussed in personal communication with the author; furthermore, due to the recent resurgence of interest in the class of $\ETOL$-languages (see \S\ref{Subsec:ETOL-subst}), which forms a super-$\AFL$, we therefore opt to include this discussion in this final section. 

Let $S$ be a semigroup, finitely generated by $A$. We say that a regular combing $R \subseteq A^+$ is a (language of) \textit{normal forms} for $S$ if every element of $S$ is represented by exactly one word in $S$. We extend this in the natural way to monoids and groups. For example, the language $a^\ast b^\ast$ is a language of normal forms for the free commutative monoid $\pres{Mon}{a,b}{ab=ba}$, and the (regular) language of freely reduced words over $(A \cup A^{-1})^\ast$ is a language of normal forms for the free group on $A$.

\begin{definition}
Let $S$ be a semigroup, finitely generated by $A$. Let $\cc$ be a class of languages. We say that $S$ is $\cc$-\textit{tabled} if there exists a regular language $R \subseteq A^+$ of normal forms for $S$ such that the multiplication table $\cT_S(R)$ lies in $\cc$. 
\end{definition}

For example, the condition of being $\CF$-tabled is the same as being word-hyperbolic \textit{with uniqueness}. The definition is extended in the obvious way to monoids and groups. Recently Duncan, Evetts, Holt \& Rees (private communication) have proved that the free product of two $\EDTOL$-tabled groups is again $\EDTOL$-tabled. We are in a place to complement these results since, as justified earlier, the statements of Theorem~\ref{Thm:Sgp_extendable_result}, Theorem~\ref{Thm:Monoid-fp-theorem-1-unqiueness}, Theorem~\ref{Thm:star-hyp-fp}, and their corollaries, can be altered to replace ``word-hyperbolic'' with ``$\cc$-tabled'' for any reversal-closed super-$\AFL$ $\cc$ without any loss of validity. Furthermore, the uniqueness of representatives in a normal form allow us to bypass the technical conditions of e.g. $1$-uniqueness. 

\begin{theoremC}\label{Thm:A'}
Let $\cc$ be a reversal-closed super-$\AFL$. Let $S_1, S_2$ be $\cc$-tabled semigroups. Then the semigroup free product $S_1 \ast S_2$ is $\cc$-tabled.
\end{theoremC}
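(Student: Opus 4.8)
The plan is to observe that the proof of Theorem~\ref{Thm:Sgp_extendable_result} was written in a class-agnostic way, so that it suffices to re-run it with $\cc$ in place of $\CF$, supplying the role played there by $1$-extendability directly from the uniqueness built into $\cc$-tabledness. Recall that $S_i$ being $\cc$-tabled means there is a regular language $R_i \subseteq A_i^+$ of \emph{normal forms} for $S_i$ (so $\pi_{S_i}$ is bijective on $R_i$) with $\cT_{S_i}(R_i) \in \cc$. As usual I would take $R = \Alt^+(R_1, R_2)$ as the combing for $S = S_1 \ast S_2$; it combs $S$ by Lemma~\ref{Lem:SGPFP-alt-is-combing}, and by the same reasoning recorded after Corollary~\ref{Cor:Uniqueness-sgp-fp}, distinct alternating words over $R_1, R_2$ represent distinct elements of $S$, so $R$ is in fact a language of normal forms for $S$. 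Hence it suffices to prove $\cT_S(R) \in \cc$.

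First I would verify the hypothesis playing the role of $1$-extendability, namely that $S_i^\bi$ is $\cc$-tabled with respect to $\overline{R_i} = R_i \cup \{\varepsilon\}$. As in Lemma~\ref{Lem:right-stabiliser-extends}, this reduces to showing $Q_i = \{u \# v^\trev \mid u, v \in R_i,\ u =_{S_i} v\} \in \cc$. But with uniqueness $u =_{S_i} v$ forces $u \equiv v$, so $Q_i$ collapses to the marked ``copy'' language $\{u \# u^\trev \mid u \in R_i\}$, which is the intersection of the context-free language $\{u \# u^\trev \mid u \in A_i^\ast\}$ with the regular language $R_i \# A_i^\ast$, and is therefore context-free. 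Since $\cc$ is a super-$\AFL$, Theorem~\ref{Thm:CF_ETOL_smallest_AFLs}(1) gives $\CF \subseteq \cc$, whence $Q_i \in \cc$ and so $\cT_{S_i^\bi}(\overline{R_i}) \in \cc$. This furnishes exactly the input consumed by the proof of Theorem~\ref{Thm:Sgp_extendable_result}.

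With this in hand I would replay that proof line for line. Define the monadic rewriting systems $\sR_i = \{(w, \#_2) \mid w \in \cT_{S_i^\bi}(\overline{R_i})\}$ and $\sR = \sR_1 \cup \sR_2$; by the previous step these are $\cc$-monadic. The combinatorial core — the description of $\anc{\sR}(\#_1\#_2)$ in Lemma~\ref{Lem:Anc-of-R_i}, the transduction $\tau_0$ deleting $\#_1$-symbols, and the two inclusions of Lemma~\ref{Lem:w_anc_is_in_table} and Lemma~\ref{Lem:w_in_TS_w_inL0} — concerns only words and rewriting steps and is entirely independent of the language class, so it transfers unchanged, yielding $\cT_S(R) = \cl_0$ where $\cl_0 = \tau_0(\anc{\sR}(\#_1\#_2)) \cap R\#_1 R \#_2 R^\trev$. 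The only language-theoretic facts invoked are the monadic ancestor property (making $\anc{\sR}(\#_1\#_2) \in \cc$) together with the full-$\AFL$ closure operations — rational transduction, intersection with regular languages, and reversal — all available for a reversal-closed super-$\AFL$. Thus $\cl_0 \in \cc$, so $\cT_S(R) \in \cc$, and since $R$ is a normal-form combing this shows $S$ is $\cc$-tabled.

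Beyond bookkeeping the argument presents essentially no obstacle; the one substantive point, which must be checked rather than merely copied, is the $1$-extendability step, where one confirms that uniqueness makes $Q_i$ context-free and hence a member of every super-$\AFL$. Everything else is a faithful transcription of the proof of Theorem~\ref{Thm:Sgp_extendable_result}, with the abstract closure properties of a reversal-closed super-$\AFL$ standing in for the concrete closure properties of $\CF$.
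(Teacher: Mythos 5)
Your proposal is correct and follows essentially the route the paper intends: it invokes the class-agnostic nature of the proof of Theorem~\ref{Thm:Sgp_extendable_result}, and supplies the $\cc$-analogue of $1$-extendability from uniqueness exactly as in Lemma~\ref{Lem:Types-of-extendable}(3), filling in the detail (which the paper leaves as ``immediate'') that $Q_i$ collapses to the marked copy language $\{u\#u^\trev \mid u \in R_i\}$, which is context-free and hence lies in every super-$\AFL$ by Theorem~\ref{Thm:CF_ETOL_smallest_AFLs}(1). The remaining bookkeeping — that $\Alt^+(R_1,R_2)$ is a normal-form combing and that the proof of Theorem~\ref{Thm:Sgp_extendable_result} uses only the closure properties of a reversal-closed super-$\AFL$ — matches the paper's argument.
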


\begin{theoremC}\label{Thm:B'}
Let $\cc$ be a reversal-closed super-$\AFL$. Let $M_1, M_2$ be $\cc$-tabled monoids. \\ Then the monoid free product $M_1 \ast M_2$ is $\cc$-tabled.
\end{theoremC}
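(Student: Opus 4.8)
The plan is to deduce Theorem~\ref{Thm:B'} from the $\cc$-analogue of Theorem~\ref{Thm:Monoid-fp-theorem-1-unqiueness}, exploiting that a language of normal forms is in particular a regular combing \emph{with uniqueness}. Suppose for $i=1,2$ that $M_i$ is $\cc$-tabled via a regular language of normal forms $R_i \subseteq A_i^\ast$ with $\cT_{M_i}(R_i) \in \cc$; thus each $M_i$ is word-hyperbolic with uniqueness, with $\CF$ replaced throughout by $\cc$. Since the discussion of \S\ref{Sec:Finalremarks} shows that every argument in this paper uses only that $\cc$ is a reversal-closed super-$\AFL$, I may freely invoke the $\cc$-versions of the earlier lemmas and of Theorem~\ref{Thm:Monoid-fp-theorem-1-unqiueness}.

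The only gap between what is given and the hypotheses of Theorem~\ref{Thm:Monoid-fp-theorem-1-unqiueness} is that its ``(with uniqueness)'' clause is stated on top of $1$-uniqueness, i.e. it requires that $\varepsilon$ be the unique word of $R_i$ representing the identity of $M_i$. First I would arrange for this. Let $w_i \in R_i$ be the unique word with $w_i =_{M_i} 1$; by uniqueness, if $w_i \not\equiv \varepsilon$ then $\varepsilon \notin R_i$, and I would replace $R_i$ by $R_i' = (R_i \setminus \{ w_i \}) \cup \{ \varepsilon \}$. This is again a regular language of normal forms, now with $\varepsilon$ as the unique representative of $1 \in M_i$. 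The key point to check is that $\cT_{M_i}(R_i') \in \cc$: since $w_i =_{M_i} 1$, replacing the content of any of the three slots (delimited by $\#_1, \#_2$) by $\varepsilon$ exactly when that slot equals the whole word $w_i$ does not alter the represented $M_i$-product, and conversely every entry over $R_i'$ lifts back to one over $R_i$. Concretely, I would partition $\cT_{M_i}(R_i)$ according to which of the three slots equal $w_i$ -- eight regular conditions -- and on each piece apply the rational transduction deleting the content of the slots equal to $w_i$. As $\cc$ is a full $\AFL$, hence closed under intersection with regular languages, rational transductions, and finite unions, the resulting language, which is precisely $\cT_{M_i}(R_i')$, lies in $\cc$.

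With this normalisation in hand, each $M_i$ is word-hyperbolic with respect to $\cc$, with $1$-uniqueness \emph{and} with uniqueness. Applying the ``(with uniqueness)'' conclusion of the $\cc$-version of Theorem~\ref{Thm:Monoid-fp-theorem-1-unqiueness} then shows that $M_1 \ast M_2$ is word-hyperbolic with respect to the combing $R = \Alt(R_1, R_2)$ and with uniqueness; by Lemma~\ref{Lem:Monoid-reduced-fp-NFT}, together with the fact that $1$-uniqueness forces every alternating word of $R$ to be reduced, distinct words of $R$ represent distinct elements of $M_1 \ast M_2$, so $R$ is a language of normal forms with $\cT_{M_1 \ast M_2}(R) \in \cc$. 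Hence $M_1 \ast M_2$ is $\cc$-tabled, as required. I expect the only genuine work to lie in the slot-swap of the second paragraph -- verifying that forcing $\varepsilon$ to be the identity representative preserves membership in $\cc$ -- while everything else is a direct transfer of the already-established $\cc$-generalisation of Theorem~\ref{Thm:Monoid-fp-theorem-1-unqiueness}.
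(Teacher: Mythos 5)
Your proposal is correct and follows the same route as the paper, which obtains Theorem~\ref{Thm:B'} by invoking the $\cc$-analogue of Theorem~\ref{Thm:Monoid-fp-theorem-1-unqiueness} and observing that a language of normal forms is a regular combing with uniqueness. Your second paragraph --- swapping the unique identity representative $w_i$ for $\varepsilon$ via slot-wise rational transductions, so that uniqueness genuinely yields $1$-uniqueness before Theorem~\ref{Thm:Monoid-fp-theorem-1-unqiueness} is applied --- correctly makes explicit a normalisation that the paper passes over with the remark that ``the uniqueness of representatives in a normal form allow us to bypass the technical conditions of e.g.\ $1$-uniqueness'', and is exactly the right way to close that small gap.
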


These theorems, which are quite elegant to state, demonstrate that the property of having unique normal forms ensures that free products behave very well, although much of the difficulty involving words representing the identity substituted into other words is bypassed in this way. Additionally, Theorem~\ref{Thm:B'} yields the corresponding result for groups and group free products, too, as the monoid free product of two groups coincides with the group free product of the same groups. In particular, we find the following corollaries, both corresponding to Corollary~\ref{Cor:groups}:

\begin{corollary}\label{Cor:ETOLtabledgroups}
The free product of two $\ETOL$-tabled groups is $\ETOL$-tabled.
\end{corollary}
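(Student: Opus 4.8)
The plan is to deduce this as an immediate instance of Theorem~\ref{Thm:B'}, once the class $\ETOL$ has been verified to be a reversal-closed super-$\AFL$. The structure of the argument mirrors that of Corollary~\ref{Cor:groups}: a group is in particular a monoid, so two $\ETOL$-tabled groups are in particular two $\ETOL$-tabled monoids; moreover, as noted following \eqref{Eq:mon_fp_multiplication}, the monoid free product of two groups coincides with their group free product. Hence, granting the hypotheses of Theorem~\ref{Thm:B'}, the group free product $G_1 \ast G_2$ of two $\ETOL$-tabled groups will be $\ETOL$-tabled.

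First I would check that $\ETOL$ is a reversal-closed super-$\AFL$. Reversal-closure is recorded in \S\ref{Sec:Notation_et_al}, where it is observed that $\CF, \ETOL$, and $\IND$ are all reversal-closed. For the super-$\AFL$ property, recall from Definition~\ref{Def:super-AFL} that this means $\ETOL$ is a full $\AFL$ having the monadic ancestor property. By Theorem~\ref{Thm:CF_ETOL_smallest_AFLs}(2), $\ETOL$ is the least hyper-$\AFL$, so in particular it is a full $\AFL$ closed under iterated substitution, and hence (as recorded in \S\ref{Subsec:ETOL-subst}) under nested iterated substitution. Since for a full $\AFL$ closure under nested iterated substitution is equivalent to the monadic ancestor property, by \cite[Proposition~2.2]{NybergBrodda2021f} as invoked in \S\ref{Subsec:Rewritingsystems}, it follows that $\ETOL$ is a super-$\AFL$.

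With these two facts in hand, I would apply Theorem~\ref{Thm:B'} with $\cc = \ETOL$ to the two groups viewed as $\ETOL$-tabled monoids, concluding that their monoid free product is $\ETOL$-tabled, and then identify this monoid free product with the group free product. I do not expect any genuine obstacle: once the abstract preservation theorem (Theorem~\ref{Thm:B'}) is available and the class-theoretic verification for $\ETOL$ is complete, the corollary is purely formal. The only point requiring real care — and the closest thing to a difficulty — is the verification that $\ETOL$ is a super-$\AFL$, rather than merely a full $\AFL$; this is precisely the content of the substitution-theoretic results surveyed in \S\ref{Subsec:ETOL-subst}, and it is exactly what allows the monadic-ancestry machinery underlying Theorem~\ref{Thm:Sgp_extendable_result}, Proposition~\ref{Prop:Polypartisan_preserves}, and Theorem~\ref{Thm:Monoid-fp-theorem-1-unqiueness} to carry over verbatim with $\CF$ replaced by $\ETOL$.
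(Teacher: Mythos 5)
Your proposal is correct and follows exactly the route the paper takes: verify that $\ETOL$ is a reversal-closed super-$\AFL$ (reversal-closure as noted in \S\ref{Sec:Notation_et_al}, and the monadic ancestor property via Theorem~\ref{Thm:CF_ETOL_smallest_AFLs}(2) and the equivalence of nested-iterated-substitution closure with the monadic ancestor property for full $\AFL$s), then apply Theorem~\ref{Thm:B'} with $\cc = \ETOL$ and identify the monoid free product of two groups with their group free product. The paper treats this as immediate from Theorem~\ref{Thm:B'} with the $\ETOL$ verification already carried out in \S\ref{Subsec:ETOL-subst}, so your write-up is just a slightly more explicit version of the same argument.
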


\begin{corollary}\label{Cor:INDtabledgroups}
The free product of two $\IND$-tabled groups is $\IND$-tabled.
\end{corollary}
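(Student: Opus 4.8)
The plan is to obtain Corollary~\ref{Cor:INDtabledgroups} as an immediate specialisation of Theorem~\ref{Thm:B'}, so essentially all of the work lies in verifying that the class $\IND$ of indexed languages satisfies the single hypothesis required there, namely that it is a reversal-closed super-$\AFL$. Both constituent properties have already been recorded in \S\ref{Sec:Notation_et_al}: the reversal-closure of $\IND$ is observed among the basic facts on formal languages, and the fact that $\IND$ is a super-$\AFL$ (that is, a full $\AFL$ with the monadic ancestor property) is due to Downey \cite{Downey1974}. Hence $\cc = \IND$ meets the hypotheses of Theorem~\ref{Thm:B'}.

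Next I would bridge the gap between groups and monoids. Theorem~\ref{Thm:B'} is phrased for monoids, whereas the statement concerns groups; the bridge is the remark in \S\ref{Subsec:Intro_MFP} that the monoid free product of two groups coincides with the ordinary group free product of the same groups. Concretely, let $G_1, G_2$ be $\IND$-tabled groups. Since a group is in particular a monoid, and since being $\cc$-tabled (a regular language of \emph{normal forms} whose multiplication table lies in $\cc$) is the same condition whether each $G_i$ is regarded as a group or as a monoid, each $G_i$ is an $\IND$-tabled monoid. Applying Theorem~\ref{Thm:B'} with $\cc = \IND$ then shows that the monoid free product $G_1 \ast G_2$ is $\IND$-tabled, and by the coincidence of the two notions of free product this is precisely the group free product. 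This yields the corollary.

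I do not anticipate any genuine obstacle. All of the technical difficulty---constructing the combing $\Alt(R_1, R_2)$, the monadic-ancestry manipulations of the multiplication tables, and the polypartisan-ancestor argument---has already been absorbed into the proofs of Theorem~\ref{Thm:Sgp_extendable_result}, Theorem~\ref{Thm:Monoid-fp-theorem-1-unqiueness}, and Theorem~\ref{Thm:star-hyp-fp}, each of which is valid verbatim for an arbitrary reversal-closed super-$\AFL$, as noted at the start of this section. The only point that requires any care is confirming that $\IND$ genuinely possesses the monadic ancestor property rather than merely being a full $\AFL$; but this is exactly the content of Downey's theorem cited above, so once that citation is in place the corollary follows at once.
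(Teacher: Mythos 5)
Your proposal is correct and follows exactly the paper's own route: the corollary is deduced by specialising Theorem~\ref{Thm:B'} to $\cc = \IND$, using that $\IND$ is a reversal-closed super-$\AFL$ (the super-$\AFL$ property being Downey's result, the reversal-closure noted in \S\ref{Sec:Notation_et_al}) and that the monoid free product of two groups coincides with their group free product. Nothing is missing.
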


This complements the aforementioned result by Duncan, Evetts, Holt \& Rees for $\EDTOL$-tabled groups. 

\clearpage
{
\bibliography{hyp_fp.bib} 
\bibliographystyle{plain}
}
 \end{document}